\documentclass{amsart}

\usepackage{amssymb}
\usepackage{array}



\theoremstyle{plain}

\newtheorem{theorem}{Theorem}[section]
\newtheorem{lemma}[theorem]{Lemma}

\newtheorem{corollary}[theorem]{Corollary}
\newtheorem{conjecture}[theorem]{Conjecture}
\newtheorem{definition}[theorem]{Definition}
\newtheorem{proposition}[theorem]{Proposition}

\newcommand{\makeinvisible}[1]{}

\numberwithin{equation}{section}

\newcommand{\cc}{{\mathbb C}}
\newcommand{\pp}{{\mathbb P}}

\newcommand{\aaa}{{\mathbb A}}

\newcommand{\Gg}{{\mathcal G}}

\newcommand{\Ff}{{\mathcal F}}
\newcommand{\Oo}{{\mathcal O}}

\newcommand{\Rr}{{\mathcal R}}

\newcommand{\Bb}{{\mathcal B}}

\begin{document}

\author[N. Mestrano]{Nicole Mestrano}
\address{Laboratoire J. A. Dieudonn\'e, CNRS UMR 7351
\\ Universit\'e C\^ote d'Azur\\
06108 Nice, Cedex 2, France}

\author[C. Simpson]{Carlos Simpson}
\address{Laboratoire J. A. Dieudonn\'e, CNRS UMR 7351
\\ Universit\'e C\^ote d'Azur\\
06108 Nice, Cedex 2, France}

\thanks{This research project was initiated on our visit to Japan supported by JSPS Grant-in-Aid for Scientific Research (S-19104002). This research
was supported in part by French ANR grants G-FIB (BLAN-08-3-352054),
SEDIGA (BLAN-08-1-309225), HODAG (09-BLAN-0151-02) and TOFIGROU
(933R03/13ANR002SRAR). We thank the University of Miami for hospitality during the completion of this work.}

\title[Irreducibility]{Irreducibility of the moduli space
of stable vector bundles of rank two and odd degree on a very general quintic surface}

\subjclass[2010]{Primary 14D20; Secondary 14J29, 14H50}

\keywords{Vector bundle, Surface, Moduli space, Deformation, Boundary}

\begin{abstract}
The moduli space $M(c_2)$, of stable rank two vector bundles of degree one
on a very general quintic surface $X\subset \pp^3$, is irreducible
for all $c_2\geq 4$ and empty otherwise.
\end{abstract}

\maketitle

\section{Introduction}
\label{introduction}

Let $X\subset \pp ^3_{\cc}$ be a very general quintic hypersurface.
Let $M(c_2):= M_X(2,1,c_2)$ denote the moduli space \cite{HuybrechtsLehn}
of stable rank $2$ vector bundles on $X$ of degree $1$ with $c_2(E)=c_2$.
Let $\overline{M} (c_2):= \overline{M}_X(2,1,c_2)$ denote the
moduli space of stable rank $2$ torsion-free sheaves on $X$ of degree $1$ with $c_2(E)=c_2$.
Recall that $\overline{M} (c_2)$ is projective, and
$M(c_2)\subset \overline{M} (c_2)$ is an open set, whose complement is called the {\em boundary}.
Let $\overline{M(c_2)}$ denote the closure of $M(c_2)$ inside $\overline{M} (c_2)$.
This might be a strict inclusion, as will in fact be the case for $c_2\leq 10$.

In \cite{MS1} we showed that $M(c_2)$ is irreducible for $4\leq c_2\leq 9$, and empty for $c_2\leq 3$.
In \cite{MS2}
we showed that the open subset $M(10)^{\rm sn}\subset M(10)$, of bundles with seminatural cohomology, is irreducible. In 1995 Nijsse
\cite{Nijsse} showed that $M(c_2)$ is irreducible for $c_2\geq 16$.

In the present paper, we complete the proof of irreducibility for the remaining intermediate values of $c_2$.

\begin{theorem}
\label{maintheorem}
For any $c_2\geq 4$, the moduli space of bundles $M(c_2)$ is irreducible.

For $c_2\geq 11$, the moduli space of
torsion-free sheaves $\overline{M} (c_2)$ is irreducible. On the other hand, $\overline{M} (10)$ has two irreducible components:
the closure $\overline{M(10)}$ of the irreducible open set $M(10)$;
and the smallest stratum $M(10,4)$ of the double dual stratification
corresponding to torsion-free sheaves whose double dual has $c'_2=4$.
Similarly $\overline{M}(c_2)$ has several irreducible components when $5\leq c_2\leq 9$ too.

The moduli space $\overline{M} (c_2)$ is good for $c_2\geq 10$,
generically smooth of the expected dimension $4c_2-20$, whereas
for $4\leq c_2\leq 9$, the moduli space $M(c_2)$ is not good.
For $c_2\leq 3$ it is empty.
\end{theorem}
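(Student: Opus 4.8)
The plan is to isolate the genuinely new range $10\le c_2\le 15$ --- the cases $4\le c_2\le 9$, emptiness for $c_2\le 3$, and $c_2\ge 16$ being quoted from \cite{MestranoSimpson1}, \cite{MestranoSimpson2} and \cite{Nijsse} --- and to organize everything around the double dual stratification. A stable torsion-free sheaf $E$ of class $(2,1,c_2)$ sits in $0\to E\to E^{\vee\vee}\to T\to 0$ with $T$ of finite length $\ell$ and $E^{\vee\vee}$ a stable bundle of class $(2,1,c_2-\ell)$; writing $c_2'=c_2-\ell$ this gives $\overline{M}(c_2)=\bigsqcup_{c_2'\le c_2}M(c_2,c_2')$, with $M(c_2,c_2)=M(c_2)$ the locus of bundles. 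For $c_2'<c_2$ the stratum $M(c_2,c_2')$ maps to $M(c_2')$ with fibre a relative Quot scheme of length-$\ell$ quotients, a generic such quotient being supported at $\ell$ distinct points with one-dimensional cokernels, so $\dim M(c_2,c_2')=\dim M(c_2')+3\ell$. Granting, by induction on $c_2$, that $M(c_2')$ is irreducible for $4\le c_2'\le c_2$, every boundary stratum is irreducible, and the theorem reduces to three tasks: (i) irreducibility of the bundle locus $M(c_2)$; (ii) for each $c_2'<c_2$, whether $M(c_2,c_2')\subset\overline{M(c_2)}$; (iii) the dimension and smoothness bookkeeping, whose only delicate inputs are the actual values of $\dim M(c_2')$ for $4\le c_2'\le 9$, taken from \cite{MestranoSimpson1}.

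For (i) the engine is the Serre construction combined with the induction on $c_2$, the base being $c_2=9$ from \cite{MestranoSimpson1}. Since $K_X=\Oo_X(1)$ one computes $\chi(E(1))=15-c_2$, and for $E$ stable $h^2(E(1))=h^0(E(-1))=0$ because $E(-1)$ is stable of negative slope; hence every $E\in M(c_2)$ with $c_2\le 14$ carries a nonzero section of $E(1)$ (for $c_2=15$ one uses $E(2)$ instead, with $\chi(E(2))=30-c_2>0$, via the analogous sequence). When $h^0(E)=0$ such a section vanishes on a zero-dimensional $Z\subset X$ of length $c_2+10$, giving $0\to\Oo_X\to E(1)\to I_Z(3)\to 0$ with $Z$ subject to Cayley--Bacharach with respect to $|\Oo_X(4)|$. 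This presents the open locus $M(c_2)^{\circ}\subset M(c_2)$ of bundles with $h^0(E)=0$ as the image of an incidence variety lying over the irreducible Hilbert scheme $\mathrm{Hilb}^{c_2+10}(X)$, with fibre over $Z$ an open subset of $\pp(\mathrm{Ext}^1(I_Z(3),\Oo_X))$; restricting to the locus where this $\mathrm{Ext}^1$ has its generic rank makes the incidence variety irreducible, hence so is $M(c_2)^{\circ}$. That $M(c_2)^{\circ}$ is dense in $M(c_2)$ --- equivalently that no component of $M(c_2)$ consists entirely of bundles with sections --- follows from an estimate $\dim\{E\in M(c_2):h^0(E)\ne 0\}<4c_2-20$, proved by running the Serre construction on $E$ itself over $\mathrm{Hilb}^{c_2}(X)$, together with the deformation-theoretic fact that every component of $\overline{M}(c_2)$ has dimension at least the expected $4c_2-20$. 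At $c_2=10$ this last step is the passage from the irreducibility of $M(10)^{\mathrm{sn}}$ in \cite{MestranoSimpson2} to that of all of $M(10)$. The hyperplane-restriction method of \cite{Nijsse} does not apply in this range, which is exactly why these intermediate $c_2$ were left open.

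For (ii) and (iii) one studies $M(c_2,c_2')$ through its fibration over $M(c_2')$. The obstruction to deforming $E\in M(c_2,c_2')$ to a point of $M(c_2,c_2'+1)$, and hence by iteration to $M(c_2)$, lies in $\mathrm{Ext}^2(E,E)_0\cong\mathrm{Hom}(E,E(K_X))_0^{*}$, and one shows this vanishes at the generic point of $M(c_2,c_2')$ whenever $c_2'$ lies in the good range; an induction on $\ell$ then gives $M(c_2,c_2')\subset\overline{M(c_2)}$ and simultaneously that $\overline{M}(c_2)$ is generically smooth of dimension $4c_2-20$, hence good, for $c_2\ge 10$. The strata that escape are exactly those built from the non-good $M(c_2')$ with $4\le c_2'\le 9$; comparing $\dim M(c_2,c_2')=\dim M(c_2')+3(c_2-c_2')$ with $4c_2-20$, and using $\dim M(4)=2$ (the value from \cite{MestranoSimpson1}, which is precisely what makes $\overline{M}(10)$ good), one gets $\dim M(10,4)=20=\dim\overline{M(10)}$, so $M(10,4)$ is a second, top-dimensional, and therefore the only additional component of $\overline{M}(10)$. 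The same comparison for $5\le c_2'\le 9$ inside $\overline{M}(c_2)$, $5\le c_2\le 9$, produces the remaining components, while for $c_2\ge 11$ every boundary stratum has dimension strictly below $4c_2-20$ and is absorbed into $\overline{M(c_2)}$, which is therefore irreducible. Non-goodness of $M(c_2)$ for $4\le c_2\le 9$ and emptiness for $c_2\le 3$ are as in \cite{MestranoSimpson1}; they do not interfere with the above, since the additional components of $\overline{M}(c_2)$ consist entirely of non-locally-free sheaves and so are disjoint from the bundle locus.

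The main obstacle is step (i) in the range $11\le c_2\le 15$: the incidence-variety argument must be made to reach every irreducible component of $M(c_2)$, which demands sharp control of the jumping locus of $\mathrm{Ext}^1(I_Z(3),\Oo_X)$ and of the Cayley--Bacharach locus inside $\mathrm{Hilb}^{c_2+10}(X)$, together with a matching upper bound on the dimension of the locus of bundles carrying extra sections so that no component can hide there. Equivalently, the work concentrates on the borderline bookkeeping around $c_2=10$ and $c_2=11$: the boundary strata coming from the non-good small-$c_2'$ moduli spaces are only just small enough, for $c_2\ge 11$, to avoid creating spurious components, and only just large enough, at $c_2=10$, to create exactly one.
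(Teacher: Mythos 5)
Your global architecture (double dual stratification, the fibration $M(c_2,c_2')\to M(c_2')$ with $3\ell$-dimensional Quot fibres, the dimension table, and the identification of $M(10,4)$ as a second $20$-dimensional component) matches the paper. But your step (i) --- irreducibility of the bundle locus $M(c_2)$ for $10\le c_2\le 15$ --- is a genuine gap, and it is where all the difficulty lives. You propose to present $M(c_2)^{\circ}$ via the Serre construction as the image of an incidence variety over $\mathrm{Hilb}^{c_2+10}(X)$, fibred in open subsets of $\pp(\mathrm{Ext}^1(I_Z(3),\Oo_X))$, and to deduce irreducibility ``by restricting to the locus where $\mathrm{Ext}^1$ has its generic rank.'' This does not follow: a component of $M(c_2)$ could lie entirely over the jumping locus of $\mathrm{Ext}^1$ or over a special stratum of the Cayley--Bacharach locus for $|\Oo_X(4)|$ inside $\mathrm{Hilb}^{c_2+10}(X)$, and for subschemes of length $\ge 20$ there is no a priori control of either. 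This is exactly the analysis that \cite{MestranoSimpson1} had to carry out stratum by stratum for $c_2\le 9$, and it does not scale; you acknowledge this as ``the main obstacle'' but offer no way past it, so the proposal restates the problem rather than solving it.

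The paper takes an entirely different route that avoids this. It never attacks $M(c_2)$ directly: goodness for $c_2\ge 10$ makes $\overline{M}(c_2)$ a local complete intersection (Lemma \ref{lci}), so Hartshorne's connectedness theorem plus Nijsse's connectedness of $\overline{M}(c_2)$ plus the structure of the singular locus $V(c_2)$ force every irreducible component to meet the boundary in pure codimension $1$ (Corollary \ref{meetsboundary}, using O'Grady's Lemma \ref{codim1}). Irreducibility is then propagated \emph{backwards from the boundary}: each codimension-$1$ stratum is irreducible (by induction and Li's theorem) and consists generically of smooth points of $\overline{M}(c_2)$ (no co-obstructions), so it lies in at most one component. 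Your proposal uses none of the connectedness input, so even granting your step (ii) you cannot rule out a component of $M(c_2)$ that never degenerates to the boundary. You also miss the two delicate borderline arguments: at $c_2=10$ one must show that a general point of $M(10,4)\cap\overline{M(10)}$ has $h^1(F(1))=0$ --- this requires the analysis of co-obstructions on $M(4)$, internal spectral varieties, and the Ellingsrud--Lehn dimension count of Section \ref{lowest} --- before invoking the seminaturality theorem of \cite{MestranoSimpson2}; and at $c_2=11$ one must exhibit a \emph{smooth} point in $\overline{M(11,10)}\cap M(11,4)$ to glue the two codimension-$1$ boundary strata into a single component (Lemma \ref{seventh}). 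Neither appears in your outline.
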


Yoshioka \cite{YoshiokaAppli, YoshiokaK3, YoshiokaAbelian}, Gomez \cite{GomezThesis} and others have shown that
the moduli space of stable torsion-free sheaves with irreducible Mukai vector (which contains, in particular, the case
of bundles of rank $2$ and degree $1$) is irreducible, over an abelian or K3 surface. Those results use
the triviality of the canonical bundle, leading to a symplectic structure and implying among other things that the
moduli spaces are smooth \cite{Mukai}. Notice that the case of K3 surfaces includes degree $4$ hypersurfaces in $\pp^3$.

We were motivated to look at a next case, of bundles on a quintic or degree $5$ hypersurface in $\pp ^3$ where $K_X=\Oo _X(1)$
is ample but not by very much. This paper is the third in a series starting with \cite{MS1, MS2}
dedicated to Professor Maruyama who, along with Gieseker, pioneered the
study of moduli of bundles on higher dimensional varieties \cite{Gieseker, GiesekerCons, Maruyama, MaruyamaET, MaruyamaTransform}.
Recall that the moduli space of stable bundles is irreducible for $c_2\gg 0$ on any smooth projective surface \cite{GiesekerLi, Li, OGradyIrred, OGradyBasic},
but there exist surfaces, such as smooth hypersurfaces in $\pp ^3$ of sufficiently high degree  \cite{Mestrano},
where the moduli space
is not irreducible for intermediate values of $c_2$.

Our theorem shows that
the irreducibility of the moduli space of bundles $M(c_2)$, for all values of $c_2$,
can persist into the range where $K_X$ is ample. On the other hand, the fact that $\overline{M} (10)$
has two irreducible components, means that if we consider all torsion-free sheaves, then the property of irreducibility
in the good range has
already started to fail in the case of a quintic hypersurface.
We furthermore show in Section \ref{example6} below that irreducibility fails for stable vector bundles on surfaces of degree $6$.

A possible application of our theorem to the case of Calabi-Yau varieties could be envisioned, by noting that a general hyperplane section of
a quintic threefold in $\pp ^4$ will be a quintic surface $X\subset \pp^3$.

\medskip

\begin{center}
{\large \em Outline of the proof}
\end{center}

Our technique is to use O'Grady's method of deformation to the boundary
\cite{OGradyIrred, OGradyBasic}, as it was exploited by Nijsse \cite{Nijsse}
in the case of a very general quintic hypersurface. We use, in particular, some of the intermediate
results of Nijsse who showed, for example, that $\overline{M} (c_2)$ is connected for $c_2\geq 10$.
Application of these results is made possible by the explicit description of the moduli spaces
$M(c_2)$ for $4\leq c_2\leq 9$ obtained in \cite{MS1} and the partial result for $M(10)$ obtained
in \cite{MS2}.

The boundary $\partial \overline{M} (c_2):= \overline{M} (c_2)-M(c_2)$ is the set of
points corresponding to torsion-free sheaves which are not locally free. We
just endow $\partial \overline{M} (c_2)$ with its reduced scheme structure. There might in
some cases be a better non-reduced structure which one could put on the boundary or onto some strata,
but that won't be necessary for our argument and we don't worry about it here.

We can further refine the decomposition
$$
\overline{M} (c_2)=M(c_2)\sqcup \partial \overline{M} (c_2)
$$
by the {\em double dual stratification} \cite{OGradyBasic}.
Let $M (c_2; c_2')$ denote the
locally closed subset, again with its reduced scheme structure,
parametrizing sheaves $F$ which fit into an exact sequence
$$
0\rightarrow F \rightarrow F^{\ast\ast} \rightarrow S \rightarrow 0
$$
such that $F\in \overline{M} (c_2)$ and
$S$ is a coherent sheaf of finite length $d=c_2-c_2'$ hence $c_2(F^{\ast\ast})=c_2'$. Notice that
$E=F^{\ast\ast}$ is also stable so it is a point in $M(c'_2)$. The stratum can be nonempty
only when $c'_2\geq 4$, which shows by the way that $\overline{M}(c_2)$ is empty for $c_2\leq 3$.
The boundary now decomposes into
locally closed subsets
$$
\partial \overline{M} (c_2) = \coprod _{4 \leq c_2' < c_2} M (c_2;c'_2).
$$
Let $\overline{M (c_2,c_2')}$ denote the closure of $M(c_2,c_2')$ in $\overline{M} (c_2)$. Notice that
we don't know anything about the position of this closure with respect to the stratification;
its boundary will not in general be a union of strata. We can similarly denote by $\overline{M(c_2)}$ the
closure of $M(c_2)$ inside $\overline{M} (c_2)$, a subset which might well be strictly smaller than $\overline{M} (c_2)$.

The construction $F\mapsto F^{\ast\ast}$ provides, by the definition of the stratification,
a well-defined map
$$
M (c_2;c'_2)\rightarrow M(c'_2).
$$
The fiber over $E\in M(c'_2)$ is the Grothendieck ${\rm Quot}$-scheme ${\rm Quot}(E;d)$ of quotients of $E$ of length $d:=c_2-c'_2$.

It follows from Li's theorem \cite[Proposition 6.4]{Li} that
if $M(c_2')$ is irreducible, then $M(c_2;c'_2)$ and hence $\overline{M(c_2;c'_2)}$ are irreducible,
with ${\rm dim} (M(c_2;c'_2) )= {\rm dim} ( M(c_2') )+ 3 (c_2-c_2')$.  See Corollary \ref{forward} below.
From the previous papers \cite{MS1, MS2},
we know the dimensions of $M(c_2')$, so we can fill in the dimensions of the strata, as will be
summarized in Table \ref{maintable}. Furthermore, by \cite{MS1} and Li's theorem, the strata
$M(c_2;c'_2)$ are irreducible whenever $c'_2\leq 9$.

Nijsse \cite{Nijsse} proves that $\overline{M} (c_2)$ is connected whenever $c_2\geq 10$, using O'Grady's techniques
\cite{OGradyIrred, OGradyBasic}. This is discussed in \cite{MS3}.
By \cite{MS1}, the moduli space $\overline{M} (c_2)$ is {\em good},
that is to say it is generically reduced of the expected dimension $4c_2-20$, whenever $c_2\geq 10$. In particular,
the dimension of the Zariski tangent space, minus the dimension of the space of obstructions, is equal to the
dimension of the moduli space. The Kuranishi theory of deformation spaces implies that $\overline{M} (c_2)$
is locally a complete intersection. Hartshorne's connectedness theorem \cite{HartshorneConnectedness}
now says that if two different irreducible components of $\overline{M} (c_2)$ meet at some point, then they intersect
in a codimension $1$ subvariety. This intersection has to be contained in the singular locus.

The singular locus in $M(c_2)$ contains a subvariety denoted $V(c_2)$, which is the set of bundles $E$ with $h^0(E)>0$.
It is the image of the space $\Sigma _{c_2}$ of
extensions
$$
0\rightarrow \Oo _X\rightarrow E\rightarrow J_P(1)\rightarrow 0
$$
where $P$ satisfies Cayley-Bacharach for quadrics.
For $c_2\geq 10$, $V(c_2)$ is irreducible of dimension $3c_2-11$.
For $c_2\geq 11$ one can see directly that
the closure of $V(c_2)$
meets the boundary. For $c_2=10$, bundles in $V(10)$
almost have seminatural
cohomology, in the sense that any deformation moving
away from $V(10)$ will have seminatural cohomology,
so $V(10)$ is contained only
in the irreducible component constructed in \cite{MS2}, and that component meets the boundary.
On the other hand, any other irreducible components of the singular locus have
strictly smaller dimension \cite[Corollary 7.1]{MS1}.

These properties of the singular locus, together with the connectedness statement of \cite{Nijsse}, allow us to
show that any irreducible component of $\overline{M} (c_2)$ meets the boundary. O'Grady proves furthermore an important
lemma, that the
intersection with the boundary must have pure codimension $1$.

We explain the strategy for proving irreducibility of $M(10)$ and $M(11)$ below, but it will perhaps be
easiest to explain first why this implies irreducibility of $M(c_2)$ for $c_2\geq 12$. Based on O'Grady's method,
this is the same strategy as was used by Nijsse who treated the cases $c_2\geq 16$.

Suppose $c_2\geq 12$ and $Z\subset \overline{M} (c_2)$ is an irreducible component.
Suppose inductively we know that $M(c_2-1)$ is irreducible.
Then $\partial Z:= Z\cap \partial \overline{M} (c_2)$
is a nonempty subset in $Z$ of codimension $1$, thus of dimension
$4c_2-21$.  However, by looking at Table \ref{maintable}, the boundary $\partial \overline{M} (c_2)$
is a union of the stratum $M(c_2,c_2-1)$ of dimension $4c_2-21$, plus other strata of strictly smaller dimension.
Therefore, $\partial Z$ must contain $M(c_2,c_2-1)$. But, the general torsion-free sheaf parametrized by a point of
$M(c_2,c_2-1)$ is the kernel $F$ of a general surjection $E\rightarrow S$ from a stable bundle $E$ general in $M(c_2-1)$,
to a sheaf $S$ of length $1$. We claim that $F$ is a smooth point of the moduli space $\overline{M} (c_2)$.
Indeed, if $F$ were a singular point then there would exist a nontrivial co-obstruction $\phi : F\rightarrow F(1)$,
see \cite{Langer, MS1, Zuo}. This would have to come from
a nontrivial co-obstruction $E\rightarrow E(1)$ for $E$, but that cannot exist because a general $E$ is a smooth point since $M(c_2-1)$
is good. Thus, $F$ is a smooth point of the moduli space. It follows that a given irreducible component of
$M(c_2,c_2-1)$ is contained in at most one irreducible component of $\overline{M} (c_2)$. On the other hand,
by the induction hypothesis $M(c_2-1)$ is irreducible, so $M(c_2,c_2-1)$ is irreducible. This gives the
induction step, that $M(c_2)$ is irreducible.

The strategy for $M(10)$ is similar. However, due to the fact that the moduli spaces $M(c'_2)$ are
not good for $c'_2\leq 9$, in particular they tend to have dimensions bigger than the expected dimensions,
there are several boundary strata which can come into play. Luckily, we know that the $M(c'_2)$,
hence all of the strata $M(10,c'_2)$, are irreducible for $c'_2\leq 9$.

The dimension of $M(10)$,
equal to the expected one, is $20$. Looking at the row $c_2=10$
in Table \ref{maintable} below, one may see that there are three strata $M(10,9)$, $M(10,8)$ and $M(10,6)$
with dimension $19$. These can be irreducible components of the  boundary $\partial Z$ if we follow the
previous argument. More difficult is the case of the stratum $M(10,4)$ which has dimension $20$.
A general point of $M(10,4)$ is not in the closure of $M(10)$, in other words
$M(10,4)$, which is closed since it is the lowest stratum, constitutes a separate
irreducible component of $\overline{M} (10)$. Now, if $Z\subset M(10)$ is an irreducible component,
$\partial Z$ could contain a codimension $1$ subvariety of $M(10,4)$.

The idea is to use the main result of \cite{MS2}, that the moduli space $M(10)^{\rm sn}$
of bundles with seminatural cohomology, is irreducible. To prove that $M(10)$ is irreducible,
it therefore suffices to show that a general point of any irreducible component $Z$, has seminatural
cohomology. From \cite{MS2} there are two conditions that need to be checked: $h^0(E)=0$
and $h^1(E(1))=0$. The first condition is automatic for a general point, since the locus
$V(10)$ of bundles with $h^0(E)>0$ has dimension $3\cdot 10-11=19$ so cannot contain a general point of $Z$.
For the second condition, it suffices to note that a general sheaf $F$ in any of the strata $M(10,9)$, $M(10,8)$
and $M(10,6)$ has $h^1(F(1))=0$; and to show that the subspace of sheaves $F$ in $M(10,4)$ with
$h^1(F(1))>0$ has codimension $\geq 2$. This latter result is treated in Section \ref{lowest}, using
the dimension results of Ellingsrud-Lehn for the scheme of quotients of a locally free sheaf,
generalizing Li's theorem. This is how we will show irreducibility of $M(10)$.

The full moduli space of torsion-free sheaves $\overline{M} (10)$ has two different irreducible
components, the closure $\overline{M(10)}$ and the lowest stratum $M(10,4)$. This
distinguishes the case of the quintic surface from the cases of abelian and K3 surfaces,
where the full moduli spaces of stable torsion-free sheaves were irreducible \cite{YoshiokaAbelian,
YoshiokaK3, GomezThesis}.

For $M(11)$, the argument is almost the same as for $c_2\geq 12$. However, there are now two different
strata of codimension $1$ in the boundary: $M(11,10)$ coming from the irreducible variety $M(10)$,
and $M(11,4)$ which comes from the other $20$-dimensional component $M(10,4)$ of $\overline{M} (10)$.
To show that these two can give rise to at most a single irreducible component in $M(11)$, completing the proof,
we will note that they do indeed intersect, and furthermore that the intersection contains smooth points.

After the end of the proof of Theorem \ref{maintheorem}, the last two sections
of the paper
treat some related considerations.

In Section \ref{correx} we provide a correction and improvement to \cite[Lemma 5.1]{MS1} and
answer \cite[Question 5.1]{MS1}. Recall from there that a co-obstruction may be interpreted as a sort of
Higgs field with values in the canonical bundle $K_X$; it has a spectral surface $Z\subset {\rm Tot}(K_X)$.
The question was to bound the irregularity of a resolution of singularities of the spectral surface $Z$. We show in
Lemma \ref{51bis} that the irregularity vanishes.

At the end of the paper in Section \ref{example6},
we show that Theorem \ref{maintheorem} is sharp  as far as the degree $5$ of the very general hypersurface is concerned.
In the case of bundles on very general hypersurfaces $X^6$ of degree $6$, we show in
Theorem \ref{twocomponents} that the
moduli space $M_{X^6}(2,1,11)$ of stable rank two bundles of degree $1$ and $c_2=11$ has at least two irreducible components.
This improves the result of \cite{Mestrano}, bringing from $27$ down to $6$ the degree of a very general hypersurface on
which there exist two irreducible components. We expect that there will be several irreducible
components in any degree $\geq 6$ but that isn't shown here.

\section{Preliminary facts}

The moduli space  $\overline{M} (c_2)$ is locally a fine moduli space. The obstruction to
existence of a Poincar\'e universal sheaf on $\overline{M} (c_2)\times X$ is an interesting
question but not considered in the present paper. A universal family exists etale-locally
over $\overline{M} (c_2)$ so for local questions we may consider $\overline{M} (c_2)$ as a
fine moduli space.

The Zariski tangent space to $\overline{M} (c_2)$ at a point $E$ is ${\rm Ext}^1(E,E)$.
If $E$ is locally free, this is the same as $H^1({\rm End}(E))$. The {\em space of obstructions}
${\rm obs}(E)$ is by definition the kernel of the surjective map
$$
{\rm Tr}: {\rm Ext}^2(E,E)\rightarrow H^2(\Oo _X).
$$
The {\em space of co-obstructions} is the dual ${\rm obs}(E)^{\ast}$ which is, by Serre duality with $K_X=\Oo _X(1)$,
equal to ${\rm Hom}^0(E,E(1))$, the space of maps $\phi : E\rightarrow E(1)$ such that ${\rm Tr}(E)=0$ in
$H^0(\Oo _X(1))\cong \cc ^4$. Such a map is called a {\em co-obstruction}.

Since a torsion-free sheaf $E$ of rank two and odd degree can have no rank-one subsheaves of the same
slope, all semistable sheaves are stable, and Gieseker and slope stability are equivalent.
If $E$ is a stable sheaf then ${\rm Hom}(E,E)= \cc$ so the space of trace-free endomorphisms is
zero. Notice that $H^1(\Oo _X)=0$ so we may disregard the trace-free condition for
${\rm Ext}^1(E,E)$.  An Euler-characteristic calculation gives
$$
{\rm dim}(Ext^1(E,E)) - {\rm dim}({\rm obs}(E)) =4c_2-20,
$$
and this is called the {\em expected dimension} of the moduli space. The moduli space is said to be {\em good}
if the dimension is equal to the expected dimension.

\begin{lemma}
\label{lci}
If the moduli space is good, then it is locally a complete intersection.
\end{lemma}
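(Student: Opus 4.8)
The plan is to use Kuranishi deformation theory to get a local analytic (or formal) model of the moduli space at an arbitrary point $E$, and then invoke the numerical hypothesis ``good'' to control the codimension of the cut-out locus. Concretely, the Kuranishi theory for sheaves on a surface produces, in a neighbourhood of $[E]$ in $\overline{M}(c_2)$, an isomorphism with the zero locus of a holomorphic map
$$
\kappa : U \longrightarrow \mathrm{obs}(E),
$$
where $U$ is an open neighbourhood of $0$ in the Zariski tangent space $\mathrm{Ext}^1(E,E)$ (we have already noted that the trace-free condition is vacuous here since $H^1(\Oo_X)=0$, so $\mathrm{Ext}^1(E,E)$ is the full tangent space, and $\mathrm{obs}(E)$ is the trace-free part of $\mathrm{Ext}^2$). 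Thus, analytically-locally near $[E]$,
$$
\overline{M}(c_2) \;\cong\; \kappa^{-1}(0) \;\subset\; U,
$$
cut out by $m := \dim \mathrm{obs}(E)$ equations inside a smooth space of dimension $n := \dim \mathrm{Ext}^1(E,E)$.

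The key step is then the dimension count. Every irreducible component of $\overline{M}(c_2)$ passing through $[E]$ has, by Krull's principal ideal theorem (Hauptidealsatz) applied $m$ times, dimension at least $n - m$. On the other hand, the Euler-characteristic computation recorded just before the lemma gives $n - m = \dim \mathrm{Ext}^1(E,E) - \dim\,\mathrm{obs}(E) = 4c_2 - 20$, which is precisely the expected dimension; and the ``good'' hypothesis says that $\overline{M}(c_2)$ has dimension exactly $4c_2-20$ at $[E]$. Hence every component through $[E]$ has dimension exactly $n-m$, i.e. the $m$ equations $\kappa_1 = \cdots = \kappa_m = 0$ form a regular sequence in the local ring $\Oo_{U,0}$ (a system of parameters for the quotient realizing the bound in the principal ideal theorem is automatically a regular sequence, since $\Oo_{U,0}$ is a regular, hence Cohen–Macaulay, local ring). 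Therefore $\kappa^{-1}(0)$ is a local complete intersection at $[E]$, and since $[E]$ was arbitrary, $\overline{M}(c_2)$ is everywhere locally a complete intersection.

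I would phrase the argument so that it works both in the complex-analytic category (Kuranishi's theorem, or its sheaf-theoretic analogue as in Artamkin or Friedman) and at the level of complete local rings (via the pro-representable hull / formal deformation space), since being a local complete intersection can be checked on the completed local ring and the completed local ring of $\overline M(c_2)$ at $[E]$ is the completion of $\Oo_{U,0}/(\kappa_1,\dots,\kappa_m)$. One technical point to be careful about is that $\overline{M}(c_2)$ is only \emph{locally} a fine moduli space — but this is exactly the issue addressed in the first paragraph of Section 2: a universal family exists étale-locally, so the Kuranishi model genuinely describes an étale (hence Zariski-local, after completion) neighbourhood of $[E]$, which is all we need.

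The main obstacle is not any hard geometry but rather making sure the bookkeeping of the two numbers is airtight: one must invoke the fact that the expected dimension $4c_2-20$ coming from $\chi$ equals $\dim\mathrm{Ext}^1(E,E)-\dim\mathrm{obs}(E)$ \emph{at the given point} $E$ (not merely generically), and then combine ``dimension $\geq$ expected'' from the Kuranishi presentation with ``dimension $=$ expected'' from goodness to force the defining equations to be a regular sequence. Once that is in place, the conclusion that a regular sequence in a regular local ring defines a local complete intersection is formal. \eop
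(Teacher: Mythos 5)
Your proof is correct and follows exactly the paper's own route: the Kuranishi presentation of the local germ as $\Phi^{-1}(0)$ for $\Phi:(\cc^a,0)\to(\cc^b,0)$ with $a=\dim\mathrm{Ext}^1(E,E)$ and $b=\dim\mathrm{obs}(E)$, combined with goodness forcing the dimension to equal $a-b$. You merely make explicit the standard commutative-algebra step (Krull's Hauptidealsatz plus Cohen--Macaulayness of the regular ambient ring) that the paper leaves implicit in the word ``hence.''
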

\begin{proof}
Kuranishi theory expresses the local analytic germ of the
moduli space $\overline{M} (c_2)$ at $E$, as $\Phi ^{-1}(0)$ for a holomorphic map
of germs $\Phi : (\cc ^a ,0) \rightarrow (\cc ^b,0)$ where $a={\rm dim}(Ext^1(E,E))$ (resp. $b={\rm dim}({\rm obs}(E))$).
Hence, if the moduli space has dimension $a-b$, it is a local complete intersection.
\end{proof}

We investigated closely the structure of the moduli space for $c_2\leq 9$, in \cite{MS1}.

\begin{proposition}
\label{leq9}
The moduli space $M(c_2)$ is empty for $c_2\leq 3$. For $4\leq c_2\leq 9$, the moduli space
$M(c_2)$ is irreducible. It has dimension strictly bigger than the expected one, for $4\leq c_2\leq 8$,
and for $c_2=9$ it is generically nonreduced but with dimension equal to the expected one;
it is also generically nonreduced for $c_2=7$.
The dimensions of the moduli spaces, the dimensions of the spaces of obstructions at a general point,
and the dimensions $h^1(E(1))$
for a general bundle $E$ in $M(c_2)$, are given in the following table.
\begin{table}[h]
\caption{\label{leq9table} Moduli spaces for $c_2\leq 9$}
\begin{tabular}{|c|c|c|c|c|c|c|}
\hline
$c_2$   & $4$ &  $5$     & $6$   & $7$   & $8$   & $9$       \\
\hline
${\rm dim}(M)$  & $2$  & $3$     & $7$   & $9$   & $13$   & $16$      \\
\hline
${\rm dim}({\rm obs})$   & $6$  & $3$     & $3$   & $3$   & $1$   & $1$       \\
\hline
$h^1(E(1))$   & $0$  & $1$     & $0$  & $0$   & $0$  & $0$       \\
\hline
${\rm generically}$ & ${\rm sm}$ & ${\rm sm}$ & ${\rm sm}$
& ${\rm nr}$ & ${\rm sm}$ & ${\rm nr}$
\\
\hline
\end{tabular}
\end{table}
\end{proposition}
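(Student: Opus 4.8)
This proposition summarizes the analysis of \cite{MestranoSimpson1}; I would organize it around the Serre construction. The starting point is a Hirzebruch-Riemann-Roch computation on the quintic: for every rank two sheaf $E$ with $c_1(E)=\Oo_X(1)$ and $c_2(E)=c_2$ one has $\chi(E)=10-c_2$, and since $E$ is rank two with $\det E = K_X$ there is an isomorphism $E^{\ast}\cong E(-1)$, so Serre duality gives $h^2(E)=h^0(E)$ and hence $2h^0(E)\geq 10-c_2$. Thus for $c_2\leq 9$ every stable bundle $E$ has a nonzero section; such a section cannot vanish on a divisor (that would produce a destabilizing $\Oo_X(a)\hookrightarrow E$ with $a\geq 1$), so it vanishes on a length $c_2$ local complete intersection $Z\subset X$ and yields a non-split extension
$$
0\rightarrow \Oo_X\rightarrow E\rightarrow J_Z(1)\rightarrow 0
$$
with $Z$ satisfying Cayley-Bacharach for $|\Oo_X(2)|$. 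Conversely every non-split extension of this shape with locally free middle term is stable, since $h^0(E(-1))=0$ (because $h^0(\Oo_X(-1))=h^0(J_Z)=0$) excludes a destabilizing $\Oo_X(1)\hookrightarrow E$. For $c_2\leq 3$ the line bundle $\Oo_X(2)$ separates every length $c_2$ subscheme, so $h^1(J_Z(2))=0$ and no non-split extension exists; combined with $h^0(E)\geq 1$ this forces $M(c_2)=\emptyset$.

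The second step is to convert the classification into the study of a family of subschemes. From $h^0(E)=1+h^0(J_Z(1))$ together with $h^0(E)\geq (10-c_2)/2$, the subscheme $Z$ attached to a bundle $E$ is forced onto several independent hyperplane sections; carrying this out for each $c_2$ with $4\leq c_2\leq 9$ confines $Z$ to an explicit irreducible family $\Ff_{c_2}$ of special configurations --- for the smallest values the length $c_2$ subschemes of a line in $\pp^3$, then of a plane conic, then of curves of larger degree --- whose irreducibility follows from the transitivity of the monodromy on the points that the moving curve cuts out on $X$. Over the dense open subset of $\Ff_{c_2}$ where $Z$ is Cayley-Bacharach for quadrics, so that locally free extensions exist, one forms the projective bundle with fibre $\pp\big(\mathrm{Ext}^1(J_Z(1),\Oo_X)\big)=\pp\big(H^1(J_Z(2))^{\ast}\big)$ of such extension classes; its total space is irreducible and dominates $M(c_2)$, which is therefore irreducible and nonempty. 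The fibre of the dominant map over a bundle $E$ is $\pp(H^0(E))$, so at a general point
$$
\dim M(c_2)=\dim \Ff_{c_2}+h^1(J_Z(2))-1-h^0(J_Z(1)),
$$
and evaluating the right side on the generic configuration in each $\Ff_{c_2}$ yields the dimensions $2,3,7,9,13,16$ of Table \ref{leq9table}.

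The remaining numerical data comes from the same exact sequences. By Serre duality $\mathrm{Ext}^2(E,E)^{\ast}\cong\mathrm{Hom}(E,E(1))$, and the trace splits off $H^0(\Oo_X(1))=\cc^4$, so $\dim\,\mathrm{obs}(E)=\hom(E,E(1))-4$; applying $\mathrm{Hom}(E,-)$ and $\mathrm{Hom}(-,E(1))$ to the Serre sequence and its twist by $\Oo_X(1)$ expresses $\hom(E,E(1))$ through the cohomology of $J_Z$, $J_Z(1)$, $J_Z(2)$ and of the length $c_2$ sheaves $\mathcal{E}xt^i(\Oo_Z,-)$, giving the obstruction dimensions $6,3,3,3,1,1$. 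The number $h^1(E(1))$ is read from $0\to\Oo_X(1)\to E(1)\to J_Z(2)\to 0$: as $h^1(\Oo_X(1))=0$ and $h^2(\Oo_X(1))=1$, it equals $h^1(J_Z(2))$ or $h^1(J_Z(2))-1$ according to whether cup product with the extension class surjects onto $H^2(\Oo_X(1))$, which for the generic configuration gives $h^1(E(1))=0$ except when $c_2=5$, where it equals $1$. The dimension table itself shows that $M(c_2)$ is not good for $4\leq c_2\leq 8$, while for $c_2=9$ the dimension is the expected one; the generic non-reducedness for $c_2=7$ and $c_2=9$ requires in addition an analysis of the Kuranishi obstruction map at a general point, showing that it is too degenerate for the moduli scheme to be reduced there.

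The main obstacle is the case analysis in the second step. For each $c_2$ one must identify the correct family $\Ff_{c_2}$ of special subschemes, prove it is irreducible of the asserted dimension, check that the Cayley-Bacharach condition for quadrics holds on a dense open subset, and control $h^0(J_Z(1))$ and $h^1(J_Z(2))$ along the whole stratification of $\Ff_{c_2}$ --- not only at its generic point --- so that the dimension formula is evaluated at a genuinely general $E$ and no deeper stratum contributes a component of larger dimension. The obstruction computations and the Kuranishi-level statements for the non-good range $c_2\leq 8$ and for the generic non-reducedness at $c_2=7$ and $c_2=9$ form a further delicate layer, because there the very scheme structure of the moduli space is part of the assertion.
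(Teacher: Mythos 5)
Your outline reconstructs the strategy of the paper that the authors actually invoke here: Proposition~\ref{leq9} is proved in the text essentially by citation to \cite{MestranoSimpson1}, the only new computation being the extraction of the row $h^1(E(1))$ from the ``number of conditions imposed on quadrics'' $c$ via exactly the two exact sequences you write down (the paper records $h^1(J_{P/X}(2))=c_2-c$ and $h^1(E(1))=c_2-c-1$). One small sharpening: the subtraction of $1$ is not conditional on a cup-product surjectivity as you suggest. Since $H^2(E(1))\cong H^0(E(-1))^{\ast}=0$ by stability, the map $H^1(J_Z(2))\rightarrow H^2(\Oo_X(1))\cong\cc$ is automatically surjective, so $h^1(E(1))=h^1(J_Z(2))-1$ in every case.

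The one place where your stated mechanism does not deliver what you claim is $c_2=8,9$. The inequality $h^0(J_Z(1))=h^0(E)-1\geq(10-c_2)/2-1$ pins $Z$ to a line for $c_2=4,5$ and to a plane for $c_2=6,7$, but for $c_2=8,9$ it is vacuous, so ``$Z$ is forced onto several independent hyperplane sections'' produces no family $\Ff_{c_2}$ at all in those two columns. In \cite{MestranoSimpson1} these are by far the hardest cases: the classification proceeds instead by a case analysis on the value of $c$, constrained by the Cayley--Bacharach condition, and the irreducibility of $M(8)$ and $M(9)$ occupies the bulk of that paper. You do flag the case analysis as the main obstacle, and the rest of your numerics (Riemann--Roch giving $\chi(E)=10-c_2$, the identification $E^{\ast}\cong E(-1)$, emptiness for $c_2\leq 3$, the dimension count through the projectivized extension bundle, and $\dim\mathrm{obs}(E)=\hom(E,E(1))-4$) are correct and consistent with Table~\ref{leq9table}; but as written the proposal substitutes a uniform-sounding derivation for precisely the part of the argument that cannot be made uniform, and it defers (as the paper itself does, to \cite{MestranoSimpson1}) the generic non-reducedness assertions at $c_2=7,9$.
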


The proof of Proposition \ref{leq9} will be given in the next section, with a review of the cases $c_2\leq 9$ from the paper
\cite{MS1}.

We also proved that the moduli space is good for $c_2\geq 10$, known by Nijsse \cite{Nijsse} for $c_2\geq 13$.

\begin{proposition}
\label{singularlocus}
For $c_2\geq 10$, the moduli space $M(c_2)$ is good. The singular locus $M(c_2)^{\rm sing}$
is the union of the locus $V(c_2)$ consisting of bundles with $h^0(E)>0$, which has
dimension $3c_2-11$, plus other pieces of dimension $\leq 13$ which in particular have codimension $\geq 6$.
\end{proposition}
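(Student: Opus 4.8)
The plan is to reduce everything to a single statement about the locus of \emph{obstructed} bundles, i.e. those $E$ carrying a nonzero co-obstruction $\phi\in{\rm Hom}^0(E,E(1))$. Since ${\rm dim}\,{\rm Ext}^1(E,E)-{\rm dim}\,{\rm obs}(E)=4c_2-20$ and ${\rm dim}\,{\rm obs}(E)={\rm dim}\,{\rm Hom}^0(E,E(1))$, at an unobstructed $E$ the Zariski tangent space has dimension $4c_2-20$, so by Kuranishi theory $M(c_2)$ is smooth of dimension $4c_2-20$ there, while at every point the Kuranishi model gives ${\rm dim}_E M(c_2)\ge 4c_2-20$. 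Hence, once we know the obstructed locus has dimension $<4c_2-20$ for $c_2\ge 10$, no irreducible component of $M(c_2)$ can exceed that dimension (a general point of such a component would be a smooth point with positive obstruction space, hence obstructed), so $M(c_2)$ is good; and then $M(c_2)^{\rm sing}$ is exactly the obstructed locus. So it suffices to describe the obstructed bundles and bound the dimension of the locus they form.

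Given $0\ne\phi\colon E\to E(1)$ put $s_\phi:=\det\phi\in H^0(\Oo_X(2))$. If $s_\phi=0$, then $\phi$ has generic rank one; its saturated kernel is a line bundle $\Oo_X(a)\subset E$ with $a\le 0$ by stability of $E$, while its image is a rank-one subsheaf of $E(1)$ of first Chern class $(1-a)H$, so stability of $E(1)$ forces its saturation $\Oo_X(b)$ to satisfy $1-a\le b\le 1$; hence $a=0$, $\Oo_X\hookrightarrow E$, and $E\in V(c_2)$. Conversely every $E\in V(c_2)$ is obstructed, the composite $E\twoheadrightarrow J_P(1)\hookrightarrow\Oo_X(1)\hookrightarrow E(1)$ being a co-obstruction (trace-free, its square vanishing). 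I would then compute ${\rm dim}\,V(c_2)=3c_2-11$ via the Serre correspondence: $V(c_2)$ is dominated by pairs $(P,e)$ with $P\in{\rm Hilb}^{c_2}(X)$ and $e\in\pp({\rm Ext}^1(J_P(1),\Oo_X))$ yielding a locally free extension. For general $P$ of length $c_2\ge 10$ the evaluation on quadrics has maximal rank $10$, so the local-to-global spectral sequence gives ${\rm ext}^1(J_P(1),\Oo_X)=c_2-10$; the Cayley--Bacharach condition for quadrics holds (vacuously for $c_2\ge 11$, and for the $10$ points on a quadric section when $c_2=10$); and $h^0(E)=1$ for the general resulting $E$, so the map to $M(c_2)$ is generically finite onto its image. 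This gives ${\rm dim}\,V(c_2)=2c_2+(c_2-10-1)=3c_2-11$ (for $c_2=10$ one counts instead $P$ inside a quadric section curve $Y$: $9+{\rm dim}\,{\rm Hilb}^{10}(Y)=19$), which is $<4c_2-20$ exactly because $c_2\ge 10$; irreducibility of $V(c_2)$ follows since the parameter space is irreducible.

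The case $s_\phi\ne 0$ is the heart of the matter. Then $\phi$ is injective with cokernel supported on the quadric section curve $Y=Z(s_\phi)\in|\Oo_X(2)|$, of Euler characteristic $5$ independently of $c_2$. Since ${\rm tr}\,\phi=0$, Cayley--Hamilton gives $\phi\circ\phi=-s_\phi\cdot{\rm id}_E$, so $E$ is the direct image $\pi_*\widetilde E$ of a rank-one torsion-free sheaf $\widetilde E$ on the double cover $\pi\colon S_\phi\to X$ branched along $Y$. From $h^1(\Oo_X(k))=0$ for all $k$ one gets $h^1(\Oo_{S_\phi})=h^1(\Oo_X)+h^1(\Oo_X(-1))=0$, so ${\rm Pic}^0(S_\phi)=0$; together with ${\rm Pic}(X)=\zz H$ this forces, for general $Y$, the line-bundle part of $\widetilde E$ to be $\pi^*\Oo_X(1)$, the only class whose pushforward has $c_1=H$. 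When $Y$ is smooth this means $\widetilde E=I_W\otimes\pi^*\Oo_X(1)$, and then $\pi^*I_{\pi(W)}\hookrightarrow I_W$ exhibits $\Oo_X(1)\otimes I_{\pi(W)}\hookrightarrow\pi_*\widetilde E=E$, of slope $5>\mu(E)$, contradicting stability unless $W=\varnothing$, impossible since $c_2(E)\ge 10>0$. Hence $s_\phi\ne 0$ with $E$ stable forces $Y$, and so $S_\phi$, to be singular; the $Y$ then lie in the discriminant of $|\Oo_X(2)|$ and its deeper strata, and the bundles $E$ that arise are governed by the non-locally-free rank-one sheaves concentrated at the finitely many singular points of the (bounded family of) surfaces $S_\phi$. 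Bounding the dimension of this locus by $13$, uniformly in $c_2$, is the one step that genuinely requires work; I would carry it out along the lines of \cite{MestranoSimpson1}, using the Ellingsrud--Lehn dimension estimates for Quot-schemes of a locally free sheaf (the refinement of Li's theorem \cite{Li}).

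Granting that bound, the obstructed locus is $V(c_2)$ together with pieces of dimension $\le 13$, hence has dimension $\le\max(3c_2-11,13)<4c_2-20$ for $c_2\ge 10$. By the first paragraph this yields goodness, and then the obstructed locus is exactly $M(c_2)^{\rm sing}$, giving the stated description: $V(c_2)$ has codimension $c_2-9\ge 1$, and the other pieces have codimension $\ge(4c_2-20)-13=4c_2-33\ge 7\ge 6$. The one real obstacle is the uniform dimension bound for the $s_\phi\ne 0$ locus over singular quadric sections; the rest is the stability-and-Chern-class bookkeeping sketched above.
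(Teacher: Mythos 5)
The paper does not reprove this statement: its ``proof'' is a citation of \cite{MestranoSimpson1} (Theorem 7.1 for goodness and the structure and dimension of $V(c_2)$, Corollary 5.1 for the bound $\leq 13$ on the remaining pieces). Your reconstruction matches the architecture of that cited argument well: the reduction of the singular locus to the obstructed locus via Kuranishi theory is correct, the dichotomy on $\det\phi$ is the right one, the identification of the $\det\phi=0$ case with $V(c_2)$ via stability of $E$ and $E(1)$ is sound, and the count $3c_2-11$ for the main stratum of $V(c_2)$ (with the separate treatment of $c_2=10$ via quadric sections) agrees with the reference.

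However, as a self-contained proof the proposal has a genuine gap exactly where you flag it: the uniform bound $\leq 13$ on the locus of bundles admitting a co-obstruction with $\det\phi\neq 0$ is the entire content of \cite[Corollary 5.1]{MestranoSimpson1}, and deferring it to that reference means you have not proved the proposition but only re-derived its easier half. Two further steps are shakier than you acknowledge. First, in the spectral-cover analysis, $h^1(\Oo_{S_\phi})=0$ only makes ${\rm Pic}(S_\phi)$ discrete; it does not force ${\rm Pic}(S_\phi)=\pi^*{\rm Pic}(X)$, so the assertion that $\pi^*\Oo_X(1)$ is ``the only class whose pushforward has $c_1=H$'' is unjustified even for smooth $Y$ (a Noether--Lefschetz statement for the family of double covers would be needed, and very generality of $X$ does not obviously supply it since $Y$ varies with $\phi$). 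Second, the equality ${\rm dim}\,V(c_2)=3c_2-11$ and the irreducibility of $V(c_2)$ require ruling out larger contributions from the strata of ${\rm Hilb}^{c_2}(X)$ where $P$ is special and ${\rm ext}^1(J_P(1),\Oo_X)$ jumps (e.g.\ $P$ on a quadric section); your parameter space is only the stratum over general $P$, so ``the parameter space is irreducible'' does not by itself give irreducibility or the exact dimension of $V(c_2)$. Both of these points are again handled in \cite{MestranoSimpson1}, which is why the paper simply cites it.
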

\begin{proof}
Following O'Grady's and Nijsse's terminology $V(c_2)$ denotes the locus which which is the image of
the moduli space of bundles together with a section, called $\Sigma _{c_2}$ or sometimes $\{ E,P\}$.
See \cite[Theorem 7.1]{MS1}.
Any pieces of the singular locus corresponding to bundles which are not
in $V(c_2)$, have dimension $\leq 13$ by \cite[Corollary 5.1]{MS1} (see Lemma \ref{51bis} below for a correction and
improvement of this statement).
\end{proof}

The case $c_2=10$ is an important central point in the classification, where the case-by-case treatment gives way to a general picture. In \cite{MS2} we proved the following partial result that will be used in the present paper to complete the proof of irreducibility.

\begin{proposition}
\label{eq10}
Let $M(10)^{\rm sn}\subset M(10)$ denote the open subset of bundles $E\in M(10)$ which have {\em seminatural cohomology},
that is where for any $m$ at most one of $h^i(E(m))$ is nonzero for $i=0,1,2$. Then $E\in M(10)^{\rm sn}$
if and only if $h^0(E)=0$ and $h^1(E(1))=0$. The moduli space $M(10)^{\rm sn}$
is irreducible.
\end{proposition}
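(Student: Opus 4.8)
The plan is to prove the two assertions separately: the cohomological equivalence I will establish directly from Riemann--Roch, Serre duality, and Castelnuovo--Mumford regularity, while the irreducibility of $M(10)^{\rm sn}$ I will not reprove, since it is exactly the main theorem of \cite{MestranoSimpson2}.

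First I will record the numerics. Riemann--Roch on the quintic (using $K_X=\Oo_X(1)$, $\chi(\Oo_X)=5$, $\Oo_X(1)^2=5$, $c_1(E)=\Oo_X(1)$ and $c_2(E)=10$) gives $\chi(E(m))=5m^2$; in particular $\chi(E)=0$ and $\chi(E(1))=5$. Since $E$ is a rank-two bundle with $\det E=\Oo_X(1)$, we have $E^{\ast}\cong E(-1)$, so Serre duality yields $h^i(E(m))=h^{2-i}(E(-m))$ for all $i,m$: the cohomology table is symmetric under $m\mapsto -m$ with $h^0$ and $h^2$ interchanged, so it will suffice to treat $m\geq 0$. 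Stability of $E$ (slope $5/2$) shows that a nonzero section of $E(-m)$ for $m\geq 1$ would produce a subsheaf $\Oo_X(m)\subset E$ of slope $5m>5/2$, contradicting stability; hence $h^0(E(-m))=0$, i.e.\ $h^2(E(m))=0$, for all $m\geq 1$, and by the same argument $h^0(E(m))=0$ for $m\leq -1$.

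With this in hand the ``only if'' direction is immediate: at $m=0$ one has $h^2(E)=h^0(E)$ and $\chi(E)=0$, so $h^0(E)>0$ would make two of the $h^i(E)$ nonzero, against seminaturality; and at $m=1$, $h^2(E(1))=0$ together with $\chi(E(1))=5$ forces $h^0(E(1))\geq 5>0$, so seminaturality forces $h^1(E(1))=0$. For the ``if'' direction, assume $h^0(E)=0$ and $h^1(E(1))=0$. Then $h^2(E)=h^0(E)=0$, hence $h^1(E)=-\chi(E)=0$, so all cohomology of $E$ vanishes. Now the two vanishings $H^1(E(1))=0$ and $H^2(E)=0$ say precisely that $E(2)$ is $0$-regular with respect to $\Oo_X(1)$ in the sense of Castelnuovo--Mumford (the $H^3$ condition is vacuous on a surface), so by Mumford's theorem $E(2)$ is $k$-regular for every $k\geq 0$, whence $H^1(E(m))=0$ for all $m\geq 1$; combined with the case $m=0$ and the symmetry $h^1(E(m))=h^1(E(-m))$ this gives $H^1(E(m))=0$ for every $m$. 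Finally, for $m\geq 1$ one has $h^2(E(m))=0$ and $h^0(E(m))=\chi(E(m))=5m^2$; for $m\leq -1$, symmetrically, $h^0(E(m))=0$ and $h^2(E(m))=5m^2$; and for $m=0$ everything vanishes. So in each degree at most one $h^i(E(m))$ is nonzero, i.e.\ $E$ has seminatural cohomology. This completes the equivalence.

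It then remains only to invoke \cite{MestranoSimpson2} for the irreducibility of $M(10)^{\rm sn}$. The main obstacle in the statement is therefore not in the argument above --- the characterization is routine once one notices the Castelnuovo--Mumford reformulation of the hypotheses --- but rather in the imported irreducibility result, whose proof is a deformation-theoretic and dimension-count analysis of bundles with $h^0(E)=0$ and $h^1(E(1))=0$ and lies outside the scope of the present paper.
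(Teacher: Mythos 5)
Your proposal is correct, and it takes a more self-contained route than the paper does. The paper's entire proof of this proposition is a citation: both the characterization ($E\in M(10)^{\rm sn}$ iff $h^0(E)=0$ and $h^1(E(1))=0$, which is Corollary 3.5 of \cite{MestranoSimpson2}) and the irreducibility (Theorem 0.2 there) are simply imported. You instead reprove the characterization from first principles, and your computations check out: Riemann--Roch with $\chi(\Oo_X)=5$, $c_1^2=5$, $c_2=10$ does give $\chi(E(m))=5m^2$; the identification $E^{\ast}\cong E(-1)$ makes Serre duality with $K_X=\Oo_X(1)$ yield the symmetry $h^i(E(m))=h^{2-i}(E(-m))$; stability kills $h^0(E(m))$ for $m\leq -1$ and hence $h^2(E(m))$ for $m\geq 1$; and the observation that the two hypotheses $H^1(E(1))=0$, $H^2(E)=0$ are exactly $0$-regularity of $E(2)$ with respect to the very ample $\Oo_X(1)$ is a clean way to propagate the vanishing of $h^1$ to all twists (one can run Castelnuovo--Mumford on $X$ directly or push forward to $\pp^3$, where the $H^3$ condition is indeed vacuous). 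What your approach buys is that the reader sees why the two vanishings characterize seminaturality without opening \cite{MestranoSimpson2}; what it cannot avoid is the appeal to that reference for irreducibility of $M(10)^{\rm sn}$, which is a substantial deformation-theoretic result and is treated as a black box by the present paper as well. So the only genuinely imported ingredient in your write-up coincides with what the paper itself imports, and the rest is a correct independent verification.
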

\begin{proof}
See  \cite{MS2}, Theorem 0.2 and Corollary 3.5.
\end{proof}

\section{Review of $c_2\leq 9$}

The dicussion of the moduli spaces for $c_2\leq 9$
went by a sometimes exhaustive
classification of cases \cite[Lemmas 7.3, 7.4]{MS1}. In retrospect we can give more uniform proofs of some parts.
For this reason, and for the reader's convenience, it is worthwhile to review here
some of the arguments leading to the proof of Proposition \ref{leq9}.

There is a
change of notation with respect to \cite{MS1}. There
we considered bundles of degree $-1$. The bundle
of degree $1$ denoted here by $E$ is the same as the bundle
denoted by $E(1)$ in \cite{MS1}.
Thus \cite[Lemma 5.2]{MS1} speaks of $h^1(E)$ in our
notation.
The present notation was already in effect in \cite{MS2}.
Fortunately, the indexing by second Chern class
remains the same in both cases.

Following O'Grady, we
denote by $V(c_2)\subset M(c_2)$ the subvariety of bundles such that $h^0(E)>0$.
For $c_2\leq 9$ the Euler characteristic argument of
\cite[\S 6.1]{MS1} tells us that $h^0(E)>0$ for any $E$,
so $V(c_2)$ is the full moduli space.

It will be useful to consider the moduli space $\Sigma _{c_2}$ consisting of
pairs $(E,\eta )$ where $E\in M(c_2)$ and $\eta \in H^0(E)$ is a nonzero section.
The pairs are taken up to isomorphism, i.e. up to scaling of the section, so the
fiber of the map $\Sigma _{c_2}\rightarrow V(c_2)$ over a bundle $E$ is the
projective space $\pp H^0(E)$.

Each irreducible component of $\Sigma _{c_2}$
has dimension
$\geq 3c_2-11$, see \cite{OGradyBasic, Nijsse} or
\cite[Corollary 3.1]{MS1}.

A point of $\Sigma _{c_2}$ may also be considered as an
extension of the form
$$
0\rightarrow \Oo _X \rightarrow E \rightarrow J_{P/X}(1)\rightarrow 0,
$$
again up to isomorphism. We therefore employ the notation $\{ E,P\}:= \Sigma _{c_2}$ too.

Such an extension exists, with $E$ a bundle, if and only if
$P\subset X$ is locally a complete intersection
of length $c_2$ and satisfies the Cayley-Bacharach condition for quadrics denoted $CB(2)$.
See \cite{Barth, GriffithsHarris, Reider}
and the references for the Hartshorne-Serre correspondence
discussed in \cite{Arrondo} for the origins of this principle.

Denote by $\{ P\}$ the Hilbert scheme of l.c.i.\
subschemes $P$ that satisfy $CB(2)$.
The map $\{ E,P\}\rightarrow \{ P\}$
has fibers described as follows: the fiber over $P$ is
a dense open subset\footnote{It is the open subset of extensions such that $E$ is locally free, nonempty because of the conditions on $P$.}
of the projective space of all extensions
$\pp {\rm Ext}^1(J_{P/X}(1),\Oo _X)$; its dimension by duality is $h^1(J_{P/X}(1))-1$.

Consider $c$ the number of conditions imposed by $P$ on
quadrics. This is related to $h^1(E(1))$ by the exact sequences
$$
H^0(\Oo _X(2))\rightarrow H^0(\Oo _P(2))\rightarrow H^1(J_{P/X}(2)) \rightarrow 0
$$
and
$$
0\rightarrow H^1(E(1)) \rightarrow H^1(J_{P/X}(2))\rightarrow H^2(\Oo _X(1)) \rightarrow 0
$$
where $H^2(E(1))= H^0(E(1))^{\ast}=0$ by stability, and $H^2(\Oo _X(1))=H^2(K_X)=\cc$.
The number $c$ is the rank of the evaluation map of $H^0(\Oo _X(2))$
on $P$, so $h^1(J_{P/X}(2))= c_2-c$, and by the second exact sequence we have $h^1(E(1))=c_2-c-1$.

The number $c_2-c-1$ is also equal to the dimension of the fiber
of the map from the space of extensions $\{ E,P\}$
to the Hilbert scheme of subschemes $\{ P\}$.
As stated previously, the space of extensions $\{ E,P\}$ fibers
over the moduli space of bundles $\{ E\}$ with fiber
${\mathbb P}H^0(E)$ of dimension $h^1(J_{P/X}(1))$.

The locus $V(c_2)$, image of $\Sigma _{c_2}$, is the main piece of the set of potentially obstructed bundles, that is to say bundles for which the space of obstructions is nonzero.

The other pieces are of smaller dimension.
There was an error in the proof of this dimension estimate,
Lemma 5.1 and hence Corollary 5.1 in \cite{MS1}. These
will be corrected and improved in a separate section at the end of the present paper, see Lemma \ref{51bis} below.

\subsection{Using the Cayley-Bacharach condition}

Recall that a subscheme $P\subset \pp^3$ satisfies the {\em Cayley-Bacharach condition} $CB(n)$ if, for any subscheme $P'\subset P$ with $\ell (P')=\ell (P)-1$, a section $f\in H^0(\Oo _{\pp ^3}(n))$ vanishing on $P'$ must also
vanish on $P$.
When $P\subset X$ this is the condition governing the existence of an extension of $J_{P/X}(n-1)$ by $\Oo _X$ that is locally free. For the study of $\Sigma _{c_2}$ we are therefore interested in subschemes satisfying $CB(2)$.

See \cite{MS1}, \cite{MS2} and the survey \cite{MS3} for
details on the basic techniques we use to analyse the Cayley-Bacharach condition.

If $U\subset \pp^3$ is a divisor, usually for us a plane, and $P$ a subscheme, there is a {\em residual subscheme} $P'$
for $P$ with respect to $U$. In the case of distinct points it is just the complement of $P\cap U$, but more generally it has a schematic meaning with $\ell (P')+\ell (P\cap U)=\ell (P)$.
If $P$ satisfies $CB(n)$ and $U$ has degree $m$ then the residual $P'$ satisfies $CB(n-m)$.

The following fact will be used often: if $P'$ is the residual of $P$ with respect to $U$, and if $Z\subset \pp^3$ is a subvariety, then the length of $Z\cap P$ at any point is
at least equal to the length of $Z\cap P'$. So for example if $P'$ has $3$ points in a line (schematically),
then $P$ does too.

It is easy to see that the Cayley-Bacharach condition $CB(2)$ cannot be satisfied by $\leq 3$ points, so the moduli space is empty for $c_2\leq 3$.
Here is a case-by-case review of the cases $4\leq c_2\leq 9$.

\subsection{For $c_2=4,5$}

Here the subscheme $P$ is either $4$ or $5$ points contained in a line. Both of these
configurations impose $c=3$ conditions on quadrics, since $h^0(\Oo _{\pp ^1}(2))=3$. This gives
values of $4-3-1=0$ and $5-3-1=1$ for $h^1(E(1))$ respectively. The moduli space is generically smooth and its dimension
is equal to $c_2-2$ by \cite[Lemma 7.7]{MS1}. This may be seen directly from the more explicit descriptions we shall give in Section \ref{lowest} below.
We get the dimension of the space of
co-obstructions by subtracting the expected dimension.
This completes the proof of Proposition \ref{leq9} for the columns $c_2=4,5$.

\subsection{For $c_2=6,7$}

In both cases, the Euler characteristic argument
of \cite[Section 6.1]{MS1} gives $h^0(E)=2$,
hence $h^0(J_{P/X}(1))=1$ and $P$ is contained in a
unique plane $U$.
By \cite[Lemma 5.5]{MS1}, the space of obstructions has dimension $3$.

For $c_2=\ell (P) =6$, see \cite[Proposition 7.4]{MS1} that we now review.
The number $c$ of conditions
imposed on quadrics has to be $\leq 5$, in particular $P$ is contained in a planar conic $Y\subset U$.
However, $c\leq 4$ may be ruled out by the size of $P$ and the Cayley-Bacharach condition, see the second paragraph of \cite[\S 7.5]{MS1}. It follows that the dimension of $\{ E,P\}$ equals the dimension of $\{ P\}$, and as noted above this dimension is $\geq 3c_2-11 = 7$.

Look at the family of length $6$ subschemes $P\subset
X\cap Y$ such that all points of $P$ are located either
at smooth points of $Y$, or at smooth points of $X\cap U$.
Such a subscheme is uniquely determined by its multiplicities
at each point, so given $Y$ the set of choices of $P$ is
discrete and if we generalize $Y$, the subscheme $P$ generalizes. Therefore, this defines a set of irreducible components of dimension is equal to the dimension of the space of choices of $Y$, that is $8$. For $U$ fixed and $Y$ general,
the choice of $P$ is equivalent to the choice of complementary set of $4$ points in $Y\cap X$; but since any $4$ points
in the plane lie on a conic, the monodromy action as we  move $Y$ can take any choice of $4$ points to any other one. Therefore,
this family is a single irreducible component of dimension $8$.

The remaining locus of $P$ containing a point where
$Y$ is singular and
$U$ is tangent to $X$, has dimension $\leq 5$. For example if
there is one such point, then the space of choices of $U$ has
dimension $2$; the space of choices of $Y$ has dimension
$2$; and by the precise estimate of
\cite[Proposition 4.3]{BrianconGrangerSpeder}, noting that
$Y$ has multiplicity $2$ at the singular point, the space of choices of $P$ has dimension $\leq 1$. For more points, we get one further dimension of the space of choices of $P$ for each other point but more than $1$ new condition imposed by the tangencies. Therefore, the locus of subschemes not fitting into the situation of the previous paragraph, has dimension $<7$ and it cannot produce a new irreducible component.

This completes the discussion for $c_2=6$: we have an irreducible component of $\{ E,P\}$ of dimension $8$ whose general point consists of a choice of $6$ out of the $10$ intersection points in $X\cap Y$ for a plane conic $Y$.
Since $h^0(E)=2$ the dimension of $\{ E\}$ is $7$.
For the table,
notice that $h^1(E(1))=6-5-1=0$. Comparing dimension, expected dimension $4\cdot 6-20=4$ and the dimension $3$ of the space of obstructions, we find that the moduli space is generically smooth with vanishing obstruction maps.

Consider now the case $c_2=7$. See \cite[Proposition 7.3]{MS1}
to be reviewed as follows.
As previously  from the second paragraph of \cite[\S 7.5]{MS1}, the case $c\leq 4$ may be ruled out.
If $c=5$, then $P$ would be contained in a plane
conic $Y\subset U$, but using the same arguments as before
the dimension of the space of choices of $P$ would be $\leq 8$;
however any irreducible component of $\{ E,P\}$
has dimension $\geq 3\cdot 7 -11 =10$ and the fiber of the map to $\{ P\}$ has dimension $1$, so a family of subschemes $P$ of dimension $\leq 8$ cannot contribute an irreducible component. Therefore we may
suppose $c=6$, the dimensions of $\{ E,P\}$ and
$\{ P\}$ are the same and are $\geq 10$.
For a given plane $U$ the space of choices
of subscheme $P\subset X\cap U$ of length $7$, has dimension $7$ by \cite{BrianconGrangerSpeder}.
The space of choices of $P$ such that $U\cap X$  is singular
(i.e. $U$ tangent to $X$), therefore has dimension $\leq 9$ and cannot contribute.
If $U$ is a plane such that $X\cap U$ is smooth, the
Hilbert scheme of $P\subset X\cap U$ is irreducible and a general point corresponds to choosing $7$ distinct points.
We conclude that $\{ E,P\}$ is
irreducible of dimension $10$ with general point consisting of
a general subscheme $P\subset
U\cap X$ of length $7$ that indeed satisfies $CB(2)$ imposing $c=6$ conditions on quadrics.

Notice that since $h^0(E)=2$ the map $\{ E,P\} \rightarrow \{ E\}$ is a fibration with fibers $\pp^1$ so the corresponding irreducible component of the moduli space has dimension $9$
as filled into the table. At a general point
where $P$ imposes $c=6$ conditions on quadrics, we get $h^1(E(1))=7-6-1=0$. From \cite[Proposition 7.3]{MS1},
by comparing dimensions
the moduli space is generically nonreduced.
This treats the column $c_2=7$.

\subsection{For $c_2=8$}
See the discussion in
\cite[Section 6.2]{MS1} and \cite[Theorem 7.2]{MS1} which will now be reviewed with some improvement in the arguments allowing us to bypass certain case-by-case considerations.

Any component of $\{ E,P\}$ has dimension $\geq
3\cdot 8 -11 = 13$.

The following technique, involving the residual subscheme recalled above, will be useful.

\begin{lemma}
\label{uuprime}
Suppose $U\subset \pp^3$ is a plane, and let $P'$ denote the residual subscheme for
$P$ with respect to $U$. If nonempty $P'$ satisfies $CB(1)$, so $\ell (P')\geq 3$ and
in case of equality $P'$ is colinear.

Let $n$ be the number of additional conditions needed to insure vanishing on $U$ of quadrics passing through $P$.
Suppose $10-c \geq n+1$. Then there exists a quadric containing $P$ of the form $U.U'$ where $U'$ is another plane, containing $P$.
In particular, $P'\subset U'$. If $10-c \geq n+2$ then
$P'$ is contained in a line, and if $10-c\geq n+3$ then $P\subset U$.
\end{lemma}
\begin{proof}
The first paragraph is a restatement of the basic property of the residual subscheme. Note that one or two points, or three non-colinear points, cannot be $CB(1)$.

In the second paragraph, we could define $n$ as the
dimension of the image of
$$
H^0(J_{P/\pp^3}(2))\rightarrow H^0(\Oo _U(2)).
$$
If  $10-c \geq n+1$ then it means that we can  impose $n$ additional conditions (say, vanishing at general points of $U$) on the
$(10-c)$-dimensional space quadrics $H^0(J_{P/\pp^3}(2))$, to get one that vanishes on $U$. This quadric has the form $U.U'$
of the union of $U$ with another plane $U'$. By definition the residual is contained in $U'$. If $10-c \geq n+2$ then the $U'$ move in a $2$-dimensional family so they cut out a line containing $P'$. If $10-c \geq n+3$ the family of $U'$ cuts out a point, however $P'$ satisfying $CB(1)$ cannot be a single point so in this case it is empty and $P\subset U$.
\end{proof}

Look at the value of $c$ at a general point of an irreducible
component. The case $c\leq 5$ may be ruled out
(using a simpler version of the subsequent arguments),
so we may assume either $c=6$ or $c=7$. If $c=6$ then
the fiber of $\{ E,P\}\rightarrow \{ P\}$
has dimension $1$ and $\{ P\}$ has dimension $\geq 12$,
whereas if $c=7$ then the irreducible component of
$\{ E,P\}$ is the same as that of $\{ P\}$, and $\{ P\}$ has dimension $\geq 13$.

It follows that a general $P$ is not contained in any multiple of a plane. Indeed, the space of $m.U$ has dimension $3$
whereas for any one, the dimension of the space of length $8$ subschemes $P\subset X\cap m.U$ is $\leq 8$ by \cite{BrianconGrangerSpeder}.

\begin{lemma}
\label{noncol}
In a given irreducible component, a general $P$ does  not contain a colinear subscheme of length $\geq 3$ in a line.
\end{lemma}
\begin{proof}
Start by noting that $P$ is not contained in $U\cup L$ for a plane $U$ and a line $L$. The space of quadrics containing
$U\cup L$ has dimension $2$, whereas $c\leq 7$ so there would be a third quadric containing $P$. One can see that it would
have to contain $L$ so it defines a plane conic $Y\subset U$, meeting $L$, and $P\subset Y\cup L$. But the dimension of the
space of choices of $Y,L$ is $3$ for the plane, $5$ for the conic, $1$ for the intersection point with $L$ and then $2$ for the
direction of $L$ making $11$. Given $Y,L$ the choice of $P$ is discrete
(except in some degenerate cases\footnote{Since $P$ is not contained in
a double plane, $Y$ is not a double line; in the other cases,
singularities of $X\cap (Y\cup L)$ are
always contained in planar singularities of multiplicity $2$
so by \cite{BrianconGrangerSpeder} the dimension of the space of $P$ increases by $1$ at any such point; but existence of the singularity imposes at least one additional condition decreasing the dimension of the space of $Y,L$.}).
The set of such $P$ can therefore not be dense in an irreducible component.

We now show that $P$ cannot have three points colinear in a line $R$, assuming to the contrary that it does.
Choose a point $p\in P$ not contained in $R$
(possible by the paragraph above the lemma).
Let $U$ be the plane spanned by $p$ and $R$. Vanishing on $P\cap R$ and at $p$ impose $4$ conditions on conics of $U$.

In the case $c=6$,
by Lemma \ref{uuprime} with $n\leq 2$ so $4=10-c\geq n+2$,
the residual $P'$ of $P$ with respect to $U$ is 
contained in a line $L$, and we get
$P\subset U\cup L$ contradicting the first paragraph.

In the case $c=7$, by Lemma \ref{uuprime} with $n\leq 2$ so $3=10-c\geq n+1$, we get $P\subset U.U'$.
Both $U$ and $U'$ must contain points not touching $R$.
The residual
$P'$ of $P$ with respect to $U$ has length $\geq 4$, indeed if it were to consist of $3$ points they would have to be colinear
by the $CB(1)$ property but that would give $P\subset U\cup L$.

If $U'$ doesn't contain $R$, the intersection $P\cap (U'\cup R)$ has 
length\footnote{An algebraic argument is needed for the piece of $P$ located at $R\cap U'$; letting $A$ denote its coordinate
algebra, $u$ the equation of $U'$, $f$ the equation of $U$ and $g$ the equation of another plane through $R$, our hypothesis is $fuA=0$
and the local piece of $P\cap (U'\cup R)$ corresponds to $A/guA$. 
Considering the exact sequence 
$$
A/guA \rightarrow A/(fA+gA) \oplus A/uA \rightarrow \cc \rightarrow 0
$$
we see that if the required inequality 
$\ell (A/guA)\geq \ell (A/(fA+gA)) + \ell (fA)$ 
didn't hold we would have $guA=(fA+gA)\cap uA$ and 
$fA\cong A/uA$  hence also $uA \cong A/fA$.
The exact sequence 
$$
0\rightarrow guA \rightarrow uA \rightarrow A/(fA+gA)
$$
becomes $0\rightarrow g(A/fA) \rightarrow A/fA \stackrel{u}{\rightarrow} A/(fA+gA)$ which would give that multiplication by $u$ on
$A/(fA+gA)$ is injective, but that isn't possible since $A$ has finite length. 
}   
at least 
$7$, but since $U'\cup R$ is cut out by
quadrics the $CB(2)$ property of $P$ says that in fact $P\subset (U'\cup R)$ contradicting the first paragraph of the proof.

Suppose $R\subset U'$. Given a residual point lying along $R$,
it cannot correspond to a subscheme leaving $R$ in a direction different from $U'$. For in that case, we could let $U_2$ be the plane contacting this direction, different from $U$ or $U'$, and
applying Lemma \ref{uuprime} again
would give $P\subset U_2U_3$ contradicting the fact that both $U$ and $U'$ contain points of $P$ not on $R$. So, any point
of $P'$ along $R$ corresponds to a point of extra contact with $U'$. We conclude that the residual subscheme of $P\cap U'$ with respect to $R\subset U'$, has length $\geq 2$. Therefore,
$n=1$ conditions suffice to imply vanishing of quadrics on $U'$ so by Lemma \ref{uuprime} this time with $3=10-c\geq n+2$ we
find that the residual of $P$ with respect to $U'$ is contained in a line. This again gives $P$ contained in a plane plus a line,
contradicting the first paragraph of the proof.
\end{proof}

We may now show that the case $c=6$ doesn't contribute
a general point of an irreducible component.
Choose $3$ points of $P$ defining a plane $U$ and apply
Lemma \ref{uuprime} adding
$n\leq 3$ extra
conditions: we get at least one quadric in our family that
has the form $U.U'$. Now if say $U\cap P$ has  length $5$
then the residual would have length $3$ and satisfy $CB(1)$,
therefore it would have to be colinear, contradicting the previous lemma. It follows that $U\cap P$ and $U'\cap P$ both have length $4$. But then, it actually sufficed to add
$n\leq 2$ conditions so we get a line containing the residual, again contradicting Lemma \ref{noncol}. This finishes ruling out the possibility of an irreducible component whose general point imposes $c\leq 6$ conditions on quadrics.

Therefore assume $c=7$. Now $\{ E,P\}$ and $\{ P\}$
have the same dimension which is $\geq 13$. There is a vector space of dimension $10-c=3$
of quadrics passing through $P$.
Let $H_1,H_2,H_3$ denote the elements of a basis.

Here the proof divides into an analysis of two distinct cases; these were
called $(a)$ and $(b)$ in \cite{MS1} refering
to the two cases of Proposition 7.1 from there. Case $(a)$
is when $H_1\cap H_2\cap H_3$ has dimension $0$. It is
a subscheme of length $8$ so we get
$$
P=H_1\cap H_2\cap H_3.
$$
A general such subscheme satisfies $CB(2)$, and
I. Dolgachev pointed out to us that these are called ``Cayley octads''. We shall treat the Cayley octads of case
$(a)$ secondly, since that will use one
part of the discussion of case $(b)$.

\noindent
{\bf Case (b).}

This is when the subscheme $Y=H_1\cap H_2\cap H_3$
contains a pure $1$-dimensional subscheme $Y_1$. Notice that
$Y_1$ is a union of components of the
curve\footnote{Note that $H_i$ cannot all vanish on some plane, otherwise
by $CB(1)$ for the residual $P$ would have to be contained in the plane as we saw previously.}
$H_1\cap H_2$. On the other hand, by Lasker's theorem \cite[p. 314]{EisenbudGreenHarris} if $Y_1$ were equal to $H_1\cap H_2$ then there couldn't be a third quadric vanishing on $Y_1$. Therefore, $Y_1$ is a curve of degree $\leq 3$.

We will now show that $Y_1$ doesn't contain a line.
Suppose to the contrary that $R\subset Y_1$ is a line. Then, all quadrics in our
family contain $R$.

Choose a point $p$ of $P$ not on $R$, let $U$ be the plane through $R$ and $p$, and apply Lemma \ref{uuprime} with $n=2$ to get $P\subset U.U'$.
If $P\cap U'$ has length $\geq 5$, it doesn't have four colinear points so it imposes $5$ conditions on conics, hence we can apply Lemma \ref{uuprime} with $n=1$ and get three residual points in a line, contradicting Lemma \ref{noncol}. 
Therefore $P\cap U$ has length $\geq 4$, however since $P\cap R$ has length $\leq 2$ by Lemma \ref{noncol}, the residual of $P\cap U$ with respect to $R$
has length $\geq 2$. Now, vanishing on $R$ and on $P\cap U$ imposes $5$ conditions on conics of $U$. Thus we may again apply Lemma \ref{uuprime} with $n=1$ and get a residual consisting of $3$ colinear points contradicting Lemma \ref{noncol}. 
This completes the proof that $Y_1$ does not contain a line.

That rules out almost all of the cases listed
in \cite[Lemma 7.4]{MS1}.

A next case is if $Y_1$ is a conic in a plane $U$. Then,
it suffices to impose a single condition, $n=1$ in Lemma \ref{uuprime}, so $3=10-c\geq n+2$ and the residual subscheme consists of at least $3$ points in a line.  This contradicts Lemma \ref{noncol}, so $Y_1$ cannot be a plane conic.

The only remaining possibility for our curve of degree three, is that $Y_1$ could be a rational cubic curve not contained in a plane. It has to be a rational normal cubic, in particular smooth. The restriction of $\Oo _{\pp ^3}(2)$ to the rational curve has degree $6$ so it has seven sections; our three dimensional family of quadrics is therefore the family of all quadrics passing through $Y_1$. They define $Y_1$ schematically, in particular $P\subset Y_1$.

This case will be of interest for our treatment of case $(a)$ below. We have that $P$ is a length $8$ subscheme of the intersection $Y_1\cap X$. For given $Y_1$ the space of choices of $P$ is discrete, and as $Y_1$ moves any $P$ generizes.
The family of such subschemes may therefore be identified with a covering of the space of choices of rational normal cubic $Y_1$. The covering is determined, over a general point, by the choice of $8$ out of the $15$ points in $Y_1\cap X$; or equivalently by the choice of the $7$ complementary points.

The space of choices of $Y_1$ has dimension $12$ (see
\cite[\S 6.2]{MS1}). Therefore, this family cannot constitute an irreducible component of $\{ P\}$. This completes the proof that case $(b)$ cannot happen at a general point of an irreducible component.

\noindent
{\bf Case (a).}

We start this discussion by continuing to look at the above
$12$-dimensional family of subschemes consisting of points in $X\cap Y_1$ for a smooth rational normal cubic curve $Y_1$.

We claim that the family of subschemes, and hence of bundles, obtained in this way is irreducible. This may be seen as follows. Any $6$ points determine the rational normal cubic, so if we move a set of $6$ points around to a different set, we get back to the same rational normal curve and this shows that the monodromy action includes permutations sending any subset of $6$ points to any other one. On the other hand, there is a rational normal curve with first order tangency to $X$, and moving it a little bit induces a permutation of two points keeping the other points fixed. Therefore, the subgroup of the symmetric group contains a transposition. Now since it is $6$-tuply transitive, it contains all the transpositions.
Thus, the monodromy group is the full symmetric group and any group of $8$ points can be moved to any other one. This shows that the family is irreducible.

As was pointed out at the end of Section 6.2 in \cite{MS1}, the space of obstructions at a general point in our family has dimension $1$. The expected dimension is $4c_2-20=12$, so the Zariski tangent space to the moduli space has dimension $13$; however, as noted above any irreducible component has dimension $\geq 13$
because of the existence of the extension. Therefore, a general point of our $12$-dimensional family lies in a smooth open subset of a unique $13$-dimensional irreducible component of
the moduli space $\{ E\}$ (notice here that the spaces $\{ E,P\}$ and $\{ P\}$ are also the same). As our $12$-dimensional family is irreducible by the previous paragraph, this determines a canonical irreducible component of the moduli space.

This discussion corrects an error of notation in the second paragraph of the proof of Lemma 7.6 of \cite{MS1}
where it was stated that the irreducible $12$-dimensional family of Cayley-Bacharach subschemes on the rational normal cubic was inside the type $(a)$ subspace of the moduli space; but that
family  is clearly of type $(b)$. Those phrases should be replaced by the argument of the previous paragraph showing that our $12$-dimensional family is contained in a unique
$13$-dimensional irreducible component of the moduli space, whose general point is of type $(a)$.

We now turn to consideration of the full set of irreducible components, whose general points are of type $(a)$,
that is to say bundles determined by Cayley octad subschemes $P$ (since we showed in the previous part that type $(b)$ cannot lead to a general point of a component).

The argument given in \cite[\S 7.4]{MS1}, using the
incidence variety suggested by A. Hirschowitz, shows that the existence of a canonically defined irreducible component implies irreducibility of the moduli space.

Let us recall her briefly how this works. We look at the full
incidence scheme $\{ X,P\}$ parametrizing smooth quintic hypersurfaces $X$ together with l.c.i.\
subschemes $P\subset X$
of length $8$ satisfying $CB(2)$ of type $(a)$. For a given $P\subset \pp^3$ the space of quintics $X$ containing it is a projective space and these all have the same dimension. So the fibration $\{ X,P\}\rightarrow \{ P\}$ is smooth,
over the base that is an open subset in
the Grassmanian ${\rm Grass}(3,10)$ of $3$-dimensional subspaces of $H^0(\Oo _{\pp^3}(2))$. Thus, the full incidence variety
$\{ X,P\}$ is irreducible. There is a dense open subset of
the space of quintics $\{ X\}$, over which the sets of irreducible components of the fibers don't change locally. Thus, the fundamental group of this open set acts on the set of irreducible components of the fiber $\{ P\} _X$ over a basepoint
$X\in \{ X\}$. This action is transitive, by irreducibility of the full incidence variety. On the other hand, we have described above a canonically defined irreducible component
of $\{ P\}_X$, containing the nearby generalizations of our $12$-dimensional family of subschemes of a rational normal cubic curve. Since it is canonically defined, this component is preserved by the monodromy action. Transitivity now implies that
$\{ P\}_X$ has only a single irreducible component.

This completes the proof of irreducibility for $c_2=8$.
The generic space of obstructions has dimension $1$. That was seen for points on the rational normal cubic curve, at the end of \S 6.2 of \cite{MS1}; however the moduli space has dimension $13$ equal to the expected dimension plus $1$, so the space of obstructions remains $1$-dimensional at a general point.

As the dimension of the moduli space is equal to the expected dimension plus the dimension of the space of obstructions, we
get that the moduli space is generically smooth, and in fact that was already the case at a point of the $12$ dimensional family of subschemes on a rational normal cubic. Since $c=7$ at a general point we  have $h^1(E(1))=8-7-1=0$, to complete the corresponding column of our table.

\subsection{For $c_2=9$}

For the column $c_2=9$, see \cite[Theorem 6.1 and Proposition 7.2]{MS1},  for the dimension $16$
and general obstruction space of dimension $1$.
The proof of \cite[Proposition 7.2]{MS1}
starts out by ruling out, for a general point of an irreducible
component, all cases of \cite[Proposition 7.1]{MS1} except case (d),
for which $c=8$. Thus $h^1(E(1))=9-8-1=0$ for a general
bundle, as we shall also see below.

We give here an alternate argument by dimension count to show that a general bundle in any irreducible component consists of
a collection of $9$ out of the $20$ points on a degree $4$ elliptic curve, intersection of two quadrics, intersected with $X$.

The expected dimension of $\{ E\}$ is $4c_2-20=16$, and
a general $E$ determines a unique\footnote{An easy dimension count rules out the possibility that $P$ be contained in a plane.}
extension hence a unique subscheme $P$ of length $9$.
The dimension of any
irreducible component of $\{ E,P\}$ is $\geq 16$ (notice that it coincides with the value of $3c_2-11$ too).

We first rule out the possibility that $c\leq 7$ for a general point. If there were a three-dimensional family of quadrics passing through $P$ then they cannot intersect transversally in a zero-dimensional subscheme, since that would have length only  $8$ and so be unable to contain $P$. But if the intersection of the three quadrics has a component of positive dimension, then arguing much as in the previous section we can get a contradiction. Indeed, the space of length $9$
subschemes contained in
the intersection of $X$ with two planes has dimension $\leq 3+3+9=15 < 16$, so any time Lemma \ref{uuprime} applies we immediately obtain a contradiction. The remaining case of points on a rational normal curve is ruled out by dimension.

We may therefore assume $c=8$, from which it follows that any irreducible component of $\{ P\}$ has dimension $\geq 16$.
It follows that a general $P$ contains at least $7$ points in general position on $X$. Let us explain the details of this argument, since this kind of dimension count has already been used several times above. Let ${\bf H}\subset \{ P\}$ denote some component of the Hilbert scheme of subschemes we are interested in, that is to say
l.c.i.\ subschemes  $P\subset X$ of length $9$
satisfying $CB(2)$. Let
$$
{\bf I} \subset {\bf H}\times X
$$
be the incidence subscheme, whose fiber over a point
$h\in {\bf H}$ is the subscheme $P_s$ thereby parametrized. Suppose $p_1,\ldots , p_k$ is a collection of distinct points in $X$, and let ${\bf H}(p_1,\ldots , p_k)\subset {\bf H}$ be the closed subscheme parametrizing those $P$ that contain
$p_1,\ldots , p_k$. It may be inductively defined as follows: we have the incidence subvariety
${\bf I}(p_1,\ldots , p_k)\subset
{\bf H}(p_1,\ldots ,  p_k)\times X$,
and for a point $p_{k+1}$ distinct from the other ones,
$$
{\bf H}(p_1,\ldots , p_k,p_{k+1}):= {\rm pr_2}^{-1}(p_k)\subset
{\bf I}(p_1,\ldots , p_k).
$$
By induction we show that for general points $p_i$,
${\bf H}(p_1,\ldots , p_k)$
is nonempty of dimension $\geq 16-2k$
whenever $k\leq 7$.
Assume it is known for $k-1$ but not true for $k$.
That means that the map
${\bf I}(p_1,\ldots , p_{k-1})\rightarrow X$
maps onto a closed subvariety, in other words there is a curve $C\subset X$ depending on $p_1,\ldots , p_{k-1}$ and
containing all of the subschemes parametrized by points of ${\bf H}(p_1,\ldots , p_{k-1})$.
But then the space of such subschemes has dimension $\leq 9-(k-1)$ (by \cite{BrianconGrangerSpeder}), contradicting our inductive hypothesis since $9-(k-1) < 16-2(k-1)$ as $(k-1)<16-9=7$.

After the $7$ points in general position there remain two points. We may conclude that the dimension of a family of subschemes $P$, once the set theoretical locations of the points are known, is $\leq 2$.

We now claim that if $P$ is general, then for a general element $H$ of our family of quadrics passing through $P$, the intersection $H\cap X$ is smooth. The proof is
by a dimension count
of the complementary family.
If the $H\cap X$ is always singular, then
the singular point is a basepoint
(of the linear system on $X$), of which there are finitely many, so it is fixed. Thus, the $H$ are all tangent to $X$ at some point. The space of $2$-dimensional linear systems tangent to $x\in X$ is a Grassmanian ${\rm Grass}(2,\cc^{7})$ of dimension $10$. As the point moves in $X$ we have a $12$ dimensional space of choices of the linear system; and each one of these fixes the set-theoretical location of the points of $P$ so by the previous paragraph, the corresponding
space of $P$ has dimension $\leq 2$, so altogether we obtain
that the family not satisfying our claimed condition has
dimension $\leq 14$. Since any component has dimension $\geq 16$ it follows that the complementary family cannot constitute a component, which proves the claim.

Suppose $V:= H^0(J_{P/\pp^3}(2))\subset \cc ^{10} =H^0(\Oo _{\pp ^3}(2))$ is the two-dimensional space of quadrics passing through our general point $P$.
Then any deformation of the subspace $V\subset \cc^{10}$ lifts to a deformation of $P$. This is because, by the previous claim, we can choose a general element of $V$ corresponding to a quadric $H_1$ such that $H_1\cap X$ is smooth. As the smooth curve deforms, our subscheme $P$ of $(H_1\cap X)\cap H_2$ generalizes since it is uniquely determined just by its multiplicities at each point.

From the above discussion it follows that a general point $P$ in any irreducible component, is obtained by choosing $9$ out of the
$20$ points of $(H_1\cap H_2)\cap X$ for a general pair of quadrics $H_1,H_2$. But since any $8$ points determine the subspace $\langle H_1,H_2\rangle$, the monodromy action on the set of $20$ intersection points is $8$-tuply transtive. By going around a curve $H_1\cap H_2$ with a single simple tangent point to $X$, we get a transposition in the monodromy group; hence it contains all transpositions and it is the full symmetric group. Therefore, the set of choices of $9$ points forms a single orbit under the monodromy group. This completes the proof that there is only one irreducible component of dimension $16$.

The space of obstructions at a general point has dimension $1$,
see the discussion above Theorem 6.1 in \cite{MS1}.
This completes our review of the proof of Proposition \ref{leq9}.

\subsection{For $c_2\geq 10$}
\label{vc2}

We will not be further reviewing the partial result of the case $c_2=10$ that was treated in \cite{MS2}, giving irreducibility of the open subset of the moduli space corresponding to seminatural cohomology as was stated in Proposition \ref{eq10} above, since the argument is more involved and it is the subject of a distinct paper.

On the other hand, it will be useful to discuss in more detail the structure of $V(c_2)$.

\begin{lemma}
\label{vgeq11}
For $c_2\geq 11$, $V(c_2)$ is irreducible of dimension $3c_2-11$
and its general point corresponds to a set of points $P$ in general position with respect to quadrics.
The closure of $V(c_2)$ meets the boundary.
\end{lemma}
\begin{proof}
See \cite[Corollary 7.1]{MS1}, showing that for $c_2\geq 11$,
$\Sigma _{c_2}$ contains an open dense subset $\Sigma ^{10}_{c_2}$ consisting of collections $P$ such that any colength $1$ subscheme imposes vanishing of all quadrics. This is an open subset of the Hilbert scheme of all subschemes $P$ of length $c_2$ so it is smooth, and it further contains an open
dense subscheme where the points of $P$ are distinct. The latter is an open subset of the symmetric product of $X$ so it is irreducible.

The closure of $V(c_2)$ intersects the boundary,
as was discussed in the proof of \cite[Proposition 3.2]{Nijsse}.
Indeed, choose a collection $P_0$ of distinct points that impose vanishing of quadrics but that doesn't satisfy $CB(2)$. Deform this collection in a family $P_t$ such that the general $P_t$ (for $t\neq 0$) satisfies $CB(2)$. Since all elements of the family impose the same number of conditions on quadrics, the space of $Ext$ groups varies in a bundle with respect to the parameter
$t$ and we may choose a family of extensions such that the
general one is locally free. But the special one is not locally free since $P_0$ didn't satisfy $CB(2)$. This family gives a curve in $\Sigma ^{10}_{c_2}$ with parameter $t\neq 0$, whose limiting sheaf at $t=0$ is not locally free: we have a deformation to the boundary.
\end{proof}

\begin{lemma}
\label{v10}
For $c_2=10$, $V(10)$ is irreducible of dimension $3c_2-11=19$ and its general point corresponds
to a subscheme $P$
composed of $10$ general points on a smooth intersection with a quadric $Y=X\cap H$. A general bundle in $V(10)$
has $h^1(E(1))=0$ so any deformation moving away from $V(10)$
will have seminatural cohomology, and only the irreducible component of $M(10)$ constructed in \cite{MS2} contains $V(10)$.
\end{lemma}
\begin{proof}
See \cite[Lemma 3.1]{Nijsse}. General elements of any
irreducible component correspond to subschemes $P$ not contained in a plane, so the irreducible components of $V(10)$ correspond to those of $\Sigma _{10}$ having the same dimension.

By \cite[Corollary 7.1]{MS1}, $\Sigma _{10}$ is pure of dimension $19$. The stratum $\Sigma ^8_{10}$ consisting of
extensions where $P$ lies in the intersection of two quadrics, has dimension $<19$. Indeed, the subscheme $P$ is determined by the two dimensional subspace of
quadrics\footnote{Unless they share a common plane but that case may also be dealt with by a dimension count: $3$ for the choice of plane, plus $4$ for the choice of line, plus at most $7$ for the choice of points in the plane since they would otherwise all be in the plane and then we could ignore the choice of line, plus $1$ for the choice of extension class, comes out to strictly less than $19$.}
and this has dimension $16$, to which we should add $1$ for the space of choices of extension: it comes out strictly less than $19$. Similarly, the dimension of the stratum $\Sigma ^7_{10}$ is strictly less than $19$, and the strata $\Sigma ^c_{10}$ for $c\leq 6$ may be ruled out using our previous line of argument with Lemma \ref{uuprime}.

We conclude that the stratum $\Sigma ^9_{10}$ is dense in
$\Sigma _{10}$. Here the extension class is determined (up to scaling) so $\{ E,P\}$ and $\{ P\}$ are the same, and $\{ P\}$
is an open subset of the space $\{ H,P\}$ parametrizing quadrics $H$ together with $P\subset H\cap X$. The open subset is given
by the conditions that no other quadrics vanish on $P$, and that $P$ satisfies $CB(2)$. But the space $\{ H,P\}$ is irreducible.

Thus, $V(10)$ is irreducible and its general point
parametrizes collections of
$10$ general points on a general smooth quadric section
$Y=X\cap H$. One may now calculate with the standard exact
sequence that for a general
$E\in V(10)$, we have $h^1(E(1))=0$.

Recall by \cite[Corollary 3.5]{MS2}
that the condition of having seminatural cohomology, for bundes in $M(10)$, is equivalent to the conjunction of two
conditions\footnote{We use duality and Euler characteristic
to rewrite the conditions of  \cite[Corollary 3.5]{MS2}.}
$h^1(E(1))=0$ and $h^0(E)=0$. Bundles in $V(10)$ clearly don't satisfy the second condition because $V(10)$ is the locus where $h^0(E)>0$. However, we have seen that a general point of $V(10)$ satisfies the first condition. On the other hand $V(10)$ is pure of dimension $19$ whereas any component of $M(10)$ has dimension $\geq 20$.
Therefore, in any irreducible component of $M(10)$ containing $V(10)$, the general point has $h^0(E)=0$, but also $h^1(E(1))=0$ since it is a generization of the general point of $V(10)$ that satisfies this condition.  Therefore, any irreducible component of
$M(10)$ containing $V$ parametrizes, generically, bundles with seminatural cohomology.

It now follows from the main result of \cite{MS2} (stated as
Proposition \ref{eq10} above) that
any irreducible component of $M(10)$ containing $V(10)$
must be the unique component constructed in \cite{MS2}.
\end{proof}

\section{The double dual stratification}
\label{strata}

Turn now to the proof of the main theorem on the moduli spaces for $c_2\geq 10$.
Our subsequent proofs will make use of O'Grady's techniques \cite{OGradyIrred, OGradyBasic},
as they were recalled and used by Nijsse in \cite{Nijsse}. The main idea is to look at the
boundary of the moduli spaces. His first main observation is the following
\cite[Proposition 3.3]{OGradyBasic}:

\begin{lemma}[O'Grady]
\label{codim1}
The boundary of any irreducible component
(or indeed, of any closed subset) of $M(c_2)$ has pure codimension $1$, if it is nonempty.
\end{lemma}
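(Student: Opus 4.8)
The plan is to prove this boundary-codimension statement by an infinitesimal argument on the Quot-scheme: a torsion-free non-locally-free sheaf $F$ sitting in $0\to F\to E\to S\to 0$ with $S$ of length $d\geq 1$ deforms, to first order, in a family whose generization makes $S$ sit at one fewer point, so one can always smooth $F$ along a $1$-parameter direction, and the only obstruction to doing so in codimension exactly $1$ is a dimension count on the locus where $S$ is non-reduced or concentrated. Concretely, let $W\subset \overline{M}(c_2)$ be a closed irreducible subset meeting the boundary, and let $F$ be a general point of a component $B$ of $\partial W=W\cap\partial\overline{M}(c_2)$. The first step is to reduce to understanding the local structure of $\overline{M}(c_2)$ near $F$ together with the map $F\mapsto F^{\ast\ast}$; by the description in Section~\ref{strata}, $F$ lies in some stratum $M(c_2;c_2')$ and the fiber over $E=F^{\ast\ast}\in M(c_2')$ is $\mathrm{Quot}(E;d)$.

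Second, I would produce a tangent vector to $\overline{M}(c_2)$ at $F$ that points out of the boundary, i.e.\ a deformation of $F$ which is locally free to first order. The point is that $\mathrm{Ext}^1(F,F)$ contains, via the inclusion $F\subset E$ and the quotient $E\to S$, classes coming from $\mathrm{Hom}(F,S)$ (first-order deformations of the quotient inside a fixed $E$); among these, the generic one deforms the length-$d$ quotient $S$ toward a transverse configuration and lifts to an honest $1$-parameter family of sheaves whose generic member is locally free precisely because a generic small deformation of a colength-$d$ subsheaf of a bundle is again a subbundle away from codimension~$\geq 2$—which here forces local freeness since $X$ is a surface and a reflexive sheaf on a smooth surface is locally free. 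Thus through $F$ there is a curve in $\overline{M}(c_2)$ whose general point lies in $M(c_2)$, so $F$ is not in the interior of $\partial\overline{M}(c_2)$: every boundary point has a locally free generization. This gives $\operatorname{codim}_W B\geq 1$.

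Third, and this is where the real content lies, I would show $\operatorname{codim}_W B\leq 1$, equivalently that $B$ cannot have codimension $\geq 2$ in $W$. The idea is a parameter count: the locus of deformations of $F$ that keep the sheaf non-locally-free is cut out inside the germ of $W$ by conditions measuring the failure of the colength-$d$ subsheaf to become a subbundle, and by the incidence-variety/Quot-scheme dimension estimates (the same Ellingsrud--Lehn / Li-type bounds invoked in the introduction, giving $\dim\mathrm{Quot}(E;d)\leq 3d$, with equality for generic configurations) the "stay in the boundary" condition imposes exactly one condition on $W$ at a generic boundary point, not more. More precisely one argues that if $W$ meets $M(c_2)$ (which one may assume, since otherwise $W\subseteq\partial\overline{M}(c_2)$ and one instead runs the argument for the ambient component of $\overline{M}(c_2)$) then a generic point of $W$ is locally free, the boundary is the zero locus of the natural "degeneracy" section whose vanishing is divisorial by the local complete intersection / purity structure, and hence each component of $\partial W$ has codimension exactly $1$.

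I expect the main obstacle to be the lower-bound direction made uniform over all strata: one must ensure that the out-of-boundary tangent vector constructed from $\mathrm{Hom}(F,S)$ is genuinely unobstructed, i.e.\ integrates to a curve in $\overline{M}(c_2)$ and not merely a formal arc, and that its general member really is locally free rather than merely torsion-free with smaller colength. Controlling this amounts to checking that the relevant obstruction in $\mathrm{Ext}^2(F,F)$ (trace-free part) vanishes against the chosen class, which one handles by pulling the obstruction back to $E$ and using smoothness of $\mathrm{Quot}(E;d)$ at a generic quotient (Li's theorem) together with the fact that $E$ itself is a point of $M(c_2')$ with controlled obstructions. Once that integration step is in place, the purity of the codimension follows formally from Hartshorne-connectedness-style local structure already recalled in the introduction, so the crux is really the clean construction and unobstructedness of the smoothing direction.
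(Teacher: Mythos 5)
The paper does not actually prove Lemma~\ref{codim1}: it is quoted directly from \cite[Proposition 3.3]{OGradyBasic}, so your attempt has to be measured against O'Grady's argument. The direction you spend your second step on, $\operatorname{codim}_W B\geq 1$, needs no argument at all: $W$ is the closure of a subset of the open locus $M(c_2)$, so $\partial W$ is automatically a proper closed subset of the irreducible $W$. Worse, the mechanism you propose there is wrong. Classes coming from $\mathrm{Hom}(F,S)$ deform the quotient $\sigma\colon E\to S$ with $E=F^{\ast\ast}$ and the colength $d$ unchanged, so they move $F$ inside the stratum $M(c_2;c_2')$ and never off the boundary; making $S$ "transverse" (a sum of skyscrapers at distinct points) still yields a non-locally-free kernel, since the kernel of a surjection onto a nonzero finite-length sheaf can never equal its own double dual. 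And the conclusion that every boundary point has a locally free generization is false in this very paper: a general point of $M(10,4)$ lies on a $20$-dimensional irreducible component of $\overline{M}(10)$ consisting entirely of non-locally-free sheaves and is not a limit of bundles.

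The entire content of the lemma is the other inequality --- that no component of $\partial W$ can have codimension $\geq 2$ in $W$ --- and here your proposal only gestures. The appeal to "local complete intersection / purity structure" is circular (purity of the boundary is precisely what is to be proved; Hartshorne's theorem concerns intersections of irreducible components; and the moduli space is not even lci for $c_2\leq 9$, while the lemma is asserted for all $c_2$), and the Quot-scheme dimension counts give the dimensions of the strata of $\overline{M}(c_2)$ but say nothing about how an arbitrary closed subset $W$ meets them. O'Grady's proof is local and determinantal: \'etale-locally on $W\times X$ the universal family admits a resolution $0\to\mathcal{O}^a\to\mathcal{O}^b\to\mathcal{F}\to 0$ with $b-a=2$; the locus where a fiber fails to be locally free is the degeneracy locus where this matrix drops rank, which by the Macaulay/Eagon--Northcott bound has codimension at most $(b-a)+1=3$ in $W\times X$ along each of its components; since a torsion-free sheaf on a surface is locally free outside a finite set, this locus is finite over $W$, so each of its components maps onto a closed subset of codimension at most $1$ in $W$, and the union of these images is exactly $\partial W$. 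None of the three ingredients that make this work --- the local resolution, the determinantal codimension bound, and the finiteness of the fibers over the base --- appears in your proposal, so the key step is missing.
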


The boundary is divided up into Uhlenbeck strata corresponding to the ``number of instantons'', which
in the geometric picture corresponds to the number of points where the torsion-free sheaf is not a bundle, counted
with correct multiplicities. A boundary stratum denoted
$M(c_2,c_2-d)$ parametrizes torsion-free sheaves $F$ fitting into an exact sequence of the
form
$$
0\rightarrow F\rightarrow E \stackrel{\sigma}{\rightarrow} S \rightarrow 0
$$
where $E\in M(c_2-d)$ is a stable locally free sheaf of degree $1$ and $c_2(E)=c_2-d$, and $S$ is
a finite coherent sheaf of length $d$ so that $c_2(F)=c_2$.
In this case
$E=F^{\ast\ast}$.
We may think of
$M(c_2,c_2-d)$ as the moduli space of pairs $(E,\sigma )$. Forgetting the quotient $\sigma$
gives a smooth map
$$
M(c_2,c_2-d) \rightarrow M(c_2-d),
$$
sending $F$ to its double dual. The
fiber over $E$ is the Grothedieck ${\rm Quot}$ scheme ${\rm Quot}(E, d)$ parametrizing quotients
$\sigma$ of $E$ of length $d$.

Since we are dealing with sheaves of
degree $1$, all semistable points are stable and our objects have no non-scalar automorphisms.
Hence the moduli spaces are fine, with a universal family existing etale-locally and well-defined up
to a scalar automorphism. We may view the double-dual map as being the relative Grothendieck  ${\rm Quot}$ scheme
of quotients of the universal object $E^{\rm univ}$ on $M(c_2-d)\times X$ over $M(c_2-d)$.
Furthermore,  locally on the ${\rm Quot}$ scheme the quotients are localized near a finite set of points,
and we may trivialize the bundle $E^{\rm univ}$ near these points, so $M(c_2,c_2-d)$
has a covering by, say, analytic open sets which are trivialized as products of open
sets in the base $M(c_2-d)$ with open sets in ${\rm Quot}(E,d)$ for any single choice of $E$.
This is all to say that the map $M(c_2,c_2-d)\rightarrow M(c_2-d)$ may be viewed as a fibration
in a fairly strong sense, with fiber ${\rm Quot}(E,d)$.

Li shows in \cite[Proposition 6.4]{Li} that ${\rm Quot}(E,d)$ is
irreducible with a dense open subset $U$ parametrizing quotients which are given by a collection of
$d$ quotients of length $1$ supported at distinct points of $X$:

\begin{theorem}[Li]
\label{quotli}
Suppose $E$ is a locally free sheaf of rank $2$ on $X$. Then for any $d>0$,
${\rm Quot}(E,d)$ is an irreducible scheme of dimension $3d$, containing
a dense open subset parametrizing quotients $E\rightarrow S$ such that
$S\cong \bigoplus \cc _{y_i}$ where $\cc _{y_i}$ is a skyscraper sheaf of length $1$
supported at $y_i\in X$, and the $y_i$ are distinct. This dense open set
maps to $X^{(d)}-{\rm diag}$ (the space of choices of distinct $d$-uple of points in $X$),
with fiber over $\{ y_i\}$ equal to $\prod _{i=1}^d \pp (E_{y_i})$.
\end{theorem}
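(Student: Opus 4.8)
The plan is to prove Theorem~\ref{quotli} (Li's theorem on ${\rm Quot}(E,d)$) by reducing it, via an induction on $d$, to a statement about the stratification of ${\rm Quot}(E,d)$ by the length of the support, the key point being that the locus parametrizing quotients with support concentrated (even partially) at fewer than $d$ points has codimension at least two, hence cannot contribute an extra irreducible component once one knows the ``separated'' locus is irreducible of dimension $3d$.

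First I would set up the local model. Since $E$ is locally free of rank two, near any point $y\in X$ we may trivialize $E\cong \Oo_X^{\oplus 2}$, so étale-locally a length-$d$ quotient of $E$ supported at $y$ is the same as a length-$d$ quotient of $\Oo_X^{\oplus 2}$ at $y$, i.e.\ a point of the punctual Quot scheme ${\rm Quot}(\Oo^{\oplus 2}, d)_y$. I would recall (or prove by a tangent-space computation, ${\rm Hom}(\ker\sigma, S) = {\rm Ext}^0$ of the kernel into the quotient) that at a quotient $\sigma: E \to S$ with $S = \bigoplus \cc_{y_i}$, the $y_i$ distinct and each fiber quotient of rank one, the scheme ${\rm Quot}(E,d)$ is smooth of dimension $3d$: indeed such $\sigma$ corresponds to choosing $d$ distinct points of $X$ ($2d$ parameters) and a line in each two-dimensional fiber $E_{y_i}$ ($d$ parameters), giving the fibration statement over $X^{(d)} - {\rm diag}$ with fibers $\prod \pp(E_{y_i})$. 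Call this open locus $U$; it is irreducible of dimension $3d$, being a $\pp^1$-bundle over the smooth connected variety $X^{(d)} - {\rm diag}$, and it is clearly dense in its closure.

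Next I would stratify ${\rm Quot}(E,d) = \bigsqcup_{\mu} Q_\mu$ by the support type $\mu$, a partition of $d$ recording the lengths of the colength pieces at the finitely many support points. The generic stratum is $U$ (all parts equal to $1$). For any other stratum, the support consists of $k < d$ points; choosing those points costs $2k$ parameters, and over a fixed collection of support points the fiber is a product of punctual Quot schemes ${\rm Quot}(\Oo^{\oplus 2}, \mu_i)_{y_i}$. The essential estimate is $\dim {\rm Quot}(\Oo^{\oplus 2}, n)_y \leq 2n-1$ for all $n\geq 1$ (with equality attained), so that any stratum with $k$ support points has dimension at most $2k + \sum_i (2\mu_i - 1) = 2k + 2d - k = 2d + k \leq 2d + (d-1) = 3d-1$; more precisely each stratum with at least one part $\geq 2$ has dimension $\leq 3d-1$, and in fact $\leq 3d-2$ unless it is the codimension-one stratum where exactly two points have collided into a single length-two point. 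Thus $\overline{U}$ is the unique irreducible component of maximal dimension, and I would finish by an induction on $d$: arguing that ${\rm Quot}(E,d)$ is connected (it is connected because it is proper and the closure of $U$ already surjects onto $X^{(d)}$, which dominates every stratum by a specialization argument), and that away from $\overline{U}$ every point specializes into $\overline{U}$, so no stratum can be its own component. Combining connectedness with the dimension bound forces irreducibility and $\dim {\rm Quot}(E,d) = 3d$.

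The main obstacle is the dimension estimate $\dim {\rm Quot}(\Oo_X^{\oplus 2}, n)_y \leq 2n-1$ for the punctual Quot scheme of a rank-two free sheaf on a smooth surface, and more importantly the claim that \emph{every} point of ${\rm Quot}(E,d)$ lies in $\overline{U}$, i.e.\ that a non-reduced or partially collided quotient can be smoothed to $d$ distinct simple quotients. For $n=1$ the punctual scheme is just $\pp(E_y)=\pp^1$, dimension $1 = 2\cdot1 - 1$, but for $n\geq 2$ one must analyze the incidence correspondence between subschemes of $X$ at $y$ and surjections onto them; here I would either cite \cite[Proposition 6.4]{Li} or Ellingsrud--Lehn directly (as the paper does elsewhere), or give the hands-on argument that any length-$n$ quotient of $\Oo^{\oplus 2}$ deforms to one supported on $n$ distinct points by first degenerating the finite scheme $Z = {\rm Supp}$ to $n$ reduced points inside $X$ — using that the punctual Hilbert scheme ${\rm Hilb}^n(X)_y$ has dimension $n-1$ — and then noting that over the locus of reduced $Z$ the quotient is automatically a sum of rank-one skyscrapers, landing in $U$. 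This smoothing/connectedness step, rather than the irreducibility bookkeeping, is where the real content lies.
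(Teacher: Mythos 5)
The paper does not actually reprove Li's theorem: its ``proof'' consists of citing \cite[Proposition 6.4]{Li} (with Ellingsrud--Lehn as an alternative source) and then verifying only the easy half, namely that the open locus $U$ of quotients which are direct sums of $d$ rank-one quotients at distinct points fibers over $X^{(d)}-{\rm diag}$ with fibers $\prod_{i=1}^d\pp (E_{y_i})$, hence is irreducible of dimension $3d$. Your description of $U$ agrees with this, and your stratification by support type together with the Ellingsrud--Lehn bound, giving dimension $2d+k\le 3d-1$ for every stratum with $k<d$ support points, is correct and is exactly the bookkeeping the paper carries out after Theorem \ref{el}. So up to that point you are doing what the paper does, in slightly more detail.

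The gap is in how you propose to finish. Connectedness plus the fact that all non-generic strata have dimension $\le 3d-1$ does \emph{not} force irreducibility: a connected scheme can have one component of dimension $3d$ (the closure of $U$) together with extra components of smaller dimension whose generic points sit in the deeper strata. The deformation-theoretic lower bound on components of ${\rm Quot}(E,d)$ at a quotient with kernel $F$ is ${\rm dim}\,{\rm Hom}(F,S)-{\rm dim}\,{\rm Ext}^1(F,S)=\chi (F,S)=2d$ (using ${\rm Ext}^2(F,S)\cong {\rm Hom}(S,F\otimes K_X)^{\ast}=0$ since $S$ is torsion and $F$ torsion-free), so components of any dimension between $2d$ and $3d-1$ are not excluded by your estimates; one genuinely has to prove that every quotient is a specialization of quotients in $U$. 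Your hands-on smoothing argument does not supply this: the scheme-theoretic support does not determine the quotient (the full-fiber quotient $E\rightarrow E_y\otimes \cc _y$ has length $2$ but reduced support), and a deformation of the support does not canonically induce a deformation of the surjection. That density statement is precisely the content of Li's Proposition 6.4 and of Ellingsrud--Lehn, so your fallback of citing them is the right move --- but then your argument reduces to the paper's citation, and the ``connectedness plus dimension count'' conclusion should be dropped rather than offered as an independent route.
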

\begin{proof}
See Propostion 6.4 in the appendix of \cite{Li}.
Notice right away that
$U$ is an open subset of ${\rm Quot}(F,d)$, and that $U$ fibers over the set $X^{(d)}-{\rm diag}$ of
distinct $d$-uples of points $(y_1,\ldots , y_d)$ (up to permutations). The fiber over
a $d$-uple $(y_1,\ldots , y_d)$ is the product of projective lines $\pp (F_{y_i})$ of quotients of
the vector spaces $F_{y_i}$. As $X^{(d)}-{\rm diag}$ has dimension $2d$, and $\prod _{i=1}^d \pp (F_{y_i})$
has dimension $d$, we get that $U$ is a smooth open variety of dimension $3d$.

This theorem may also be viewed as a consequence of a more precise bound established
by Ellingsrud and Lehn \cite{EllingsrudLehn}, which will be stated as Theorem \ref{el} below,
needed for our arguments in Section \ref{lowest}.
\end{proof}

\begin{corollary}
\label{forward}
We have
$$
{\rm dim} (M(c_2;c'_2) )= {\rm dim} ( M(c_2') )+ 3 (c_2-c_2').
$$
If $M(c_2')$ is irreducible, then $M(c_2;c'_2)$ and hence $\overline{M(c_2;c'_2)}$ are irreducible.
\end{corollary}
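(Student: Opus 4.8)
The plan is to derive both assertions directly from Li's theorem (Theorem~\ref{quotli}) together with the fibration description of the double dual map established in Section~\ref{strata}. Recall that $M(c_2;c'_2)$ comes equipped with the morphism $\pi : M(c_2;c'_2)\to M(c'_2)$ sending $F$ to $F^{\ast\ast}$, and that over a bundle $E\in M(c'_2)$ the fiber is the Quot scheme $\mathrm{Quot}(E,d)$ with $d=c_2-c'_2$. By Theorem~\ref{quotli}, each such fiber is irreducible of dimension $3d=3(c_2-c'_2)$, independently of the choice of $E$.

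First I would establish the dimension formula. Since the fibers of $\pi$ all have the same dimension $3(c_2-c'_2)$ and the base $M(c'_2)$ is (by definition of the stratification, or by the remarks at the end of Section~\ref{strata}) covered by analytic opens over which $\pi$ is a product, the total space has dimension $\dim M(c'_2)+3(c_2-c'_2)$ on each such open chart. As these charts cover $M(c_2;c'_2)$, every irreducible component has this dimension, giving the stated equality. (One should be slightly careful that $M(c'_2)$ may itself be reducible or of varying local dimension when $c'_2\le 9$, but since here we only ever apply the irreducibility conclusion when $M(c'_2)$ is irreducible, the dimension count presents no difficulty; alternatively the formula holds componentwise.)

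Next I would prove irreducibility under the hypothesis that $M(c'_2)$ is irreducible. The morphism $\pi$ is, as explained in Section~\ref{strata}, a fibration in a strong sense: it is locally on the source a product of an open in the base with $\mathrm{Quot}(E,d)$. A surjective morphism with irreducible base and irreducible fibers all of the same dimension has irreducible total space — this is the standard fact that, given such a fibration, any irreducible component of the source dominates the base (it cannot be contained in a single fiber, by the dimension count) and hence, since the generic fiber is irreducible, there can be only one such component. Concretely: if $Z_1,Z_2$ are distinct irreducible components of $M(c_2;c'_2)$, each surjects onto $M(c'_2)$, so over a generic $E\in M(c'_2)$ both $Z_1\cap\pi^{-1}(E)$ and $Z_2\cap\pi^{-1}(E)$ are nonempty closed subsets of the irreducible variety $\mathrm{Quot}(E,d)$ of the full dimension $3d$, forcing them to coincide with $\mathrm{Quot}(E,d)$ and hence to meet, a contradiction. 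Finally, the closure $\overline{M(c_2;c'_2)}$ of an irreducible set is irreducible, which gives the last clause.

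The only genuine subtlety — hardly an obstacle once Theorem~\ref{quotli} is in hand — is making precise that $\pi$ behaves enough like a Zariski-locally-trivial fibration for the "irreducible base plus irreducible equidimensional fibers implies irreducible total space" argument to apply; this is exactly what the discussion preceding Theorem~\ref{quotli} arranges, by trivializing $E^{\mathrm{univ}}$ near the finite support of the quotients and thereby exhibiting $M(c_2;c'_2)$ as covered by opens of the form $(\text{open in }M(c'_2))\times(\text{open in }\mathrm{Quot}(E,d))$. Everything else is the routine dimension bookkeeping recorded above.
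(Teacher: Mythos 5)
Your proof is correct and follows essentially the same route as the paper: the paper's own proof simply observes that the map $M(c_2;c'_2)\rightarrow M(c'_2)$ is a fibration whose fibers are the irreducible $3(c_2-c'_2)$-dimensional ${\rm Quot}$ schemes of Theorem \ref{quotli}, and concludes the dimension formula and irreducibility exactly as you do. Your additional care about why "irreducible base plus irreducible equidimensional fibers" yields an irreducible total space is a reasonable elaboration of what the paper leaves implicit.
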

\begin{proof}
The fibration $M (c_2;c'_2)\rightarrow M(c'_2)$ has fiber the ${\rm Quot}$ scheme whose dimension
is $3 (c_2-c_2')$ by the previous proposition. Furthermore, these ${\rm Quot}$ schemes are irreducible
so if the base is irreducible, so is the total space.
\end{proof}

Corollary \ref{forward} allows
us to fill in the dimensions of the strata $M (c_2;c'_2)$ in the following table.
The entries in the second column are the expected dimension $4c_2-20$;
in the third column the dimension of $M:= M(c_2)$; and in the
following columns, ${\rm dim} M (c_2,c_2-d)$ for $d=1,2,\ldots $.
The rule is to add $3$ as you go diagonally down and to the right by one.

\bigskip

\begin{table}[h]
\caption{\label{maintable} Dimensions of strata}
\begin{tabular}{|c|c|c|c|c|c|c|c|c|c|c|}
\hline
$c_2$ & e.d.\ & ${\rm dim}(M)$ & ${\scriptstyle d=1}$ & ${\scriptstyle d=2}$ & ${\scriptstyle d=3}$ &
    ${\scriptstyle d=4}$ & ${\scriptstyle d=5}$ & ${\scriptstyle d=6}$ & ${\scriptstyle d=7}$ & ${\scriptstyle d=8}$  \\
\hline
$4$   & $-4$ &  $2$     & $-$   & $-$   & $-$   & $-$   & $-$   & $-$   & $-$   & $-$    \\
$5$   & $0$  & $3$     & $5$   & $-$   & $-$   & $-$   & $-$   & $-$   & $-$   & $-$    \\
$6$   & $4$  & $7$     & $6$   & $8$   & $-$   & $-$   & $-$   & $-$   & $-$   & $-$    \\
$7$   & $8$  & $9$     & $10$  & $9$   & $11$  & $-$   & $-$   & $-$   & $-$   & $-$    \\
$8$   & $12$ & $13$    & $12$  & $13$  & $12$  & $14$  & $-$   & $-$   & $-$   & $-$    \\
$9$   & $16$ & $16$    & $16$  & $15$  & $16$  & $15$  & $17$  & $-$   & $-$   & $-$    \\
$10$  & $20$ & $20$    & $19$  & $19$  & $18$  & $19$  & $18$  & $20$  & $-$   & $-$    \\
$11$  & $24$ & $24$    & $23$  & $22$  & $22$  & $21$  & $22$  & $21$  & $23$  & $-$    \\
$12$  & $28$ & $28$    & $27$  & $26$  & $25$  & $25$  & $24$  & $25$  & $24$  & $26$    \\
\hline
${\scriptstyle \geq 13}$  & ${\scriptstyle 4c_2-20}$ & ${\scriptstyle 4c_2-20}$   & ${\scriptstyle 4c_2-21}$ &
\multicolumn{7}{l|}{${\scriptstyle \leq 4c_2-22}$}  \\
\hline
\end{tabular}
\end{table}

\bigskip

The first remark useful for interpreting this information, is that any irreducible
component of $\overline{M} (c_2)$ must have dimension at least equal to the expected
dimension $4c_2-20$. In particular, a stratum with strictly smaller dimension,
must be a part of at least one irreducible component 
containing a bigger stratum.
For $c_2\geq 11$, we have
$$
{\rm dim} (M (c_2,c_2'))< {\rm dim}(\overline{M} (c_2)) = 4c_2-20.
$$
Hence, for $c_2\geq 11$ the closures $\overline{M (c_2,c_2')}$
cannot themselves form irreducible components of
$\overline{M} (c_2)$, in other words the irreducible components of $\overline{M} (c_2)$ are the
same as those of $M (c_2)$.
Notice, on the other hand, that $\overline{M} (10)$ contains two pieces of dimension $20$,
the locally free sheaves in $M(10)$ and the sheaves in $M(10,4)$ whose double duals come from $M(4)$.

Recall from Proposition \ref{leq9} that the moduli spaces $M(c_2)$ are irreducible for $c_2=4,\ldots , 9$.
It follows from Corollary \ref{forward} that the strata $M(c_2,c_2')$ are irreducible, for any
$c_2'\leq 9$. In particular, the piece $\overline{M (10,4)}$ is irreducible, and its general point,
representing a non-locally free sheaf,
is not confused with any point of
$\overline{M(10)}$. Since the other strata of $\overline{M} (10)$ all have dimension $<20$, it follows that
$\overline{M (10,4)}$ is an irreducible component of $\overline{M} (10)$.
One similarly gets from the table that $\overline{M}(c_2)$ has several irreducible components when $5\leq c_2\leq 9$.

\section{Hartshorne's connectedness theorem}
\label{hartshorne}

Hartshorne proves a connectedness theorem for local complete intersections.
Here is the version that we need.

\begin{theorem}[Hartshorne]
Suppose $Z$ is a local complete intersection of dimension $d$. Then, any nonempty intersection of two irreducible components
of $Z$ has pure dimension $d-1$.
\end{theorem}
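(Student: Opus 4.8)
The final statement is Hartshorne's connectedness theorem for local complete intersections: if $Z$ is a local complete intersection of dimension $d$, then any nonempty intersection of two irreducible components has pure dimension $d-1$.

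The plan is to reduce this to the algebraic statement that a Noetherian local ring which is a complete intersection (or even just Cohen-Macaulay) has connected punctured spectrum in small codimension — more precisely, Hartshorne's connectedness result that the spectrum of a local ring $A$ that is a complete intersection, or Cohen-Macaulay, satisfies: $\mathrm{Spec}(A)\setminus\{\mathfrak{m}\}$ is connected when $\dim A\geq 2$. So I would proceed as follows.

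First, localize at a generic point $\eta$ of an irreducible component $W$ of the intersection $Z_1\cap Z_2$ of two distinct irreducible components $Z_1,Z_2$ of $Z$. The local ring $A:=\mathcal{O}_{Z,\eta}$ is a local complete intersection (Lemma \ref{lci} gives that $Z=\overline{M}(c_2)$ is locally a complete intersection in the good range, and in general one invokes that a l.c.i.\ localizes to a l.c.i.), hence Cohen-Macaulay, hence equidimensional and catenary with no embedded components. Its minimal primes correspond to the components of $Z$ passing through $\eta$, and among them are (at least) the two distinct primes $\mathfrak{p}_1,\mathfrak{p}_2$ coming from $Z_1$ and $Z_2$. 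Thus $\mathrm{Spec}(A)$ is disconnected after removing the closed point if $\dim A\leq 1$: indeed if $\dim A=1$ the minimal primes would be the only non-maximal primes and the punctured spectrum would be totally disconnected, with at least two points, so the two components would meet only at $\eta$ and $W=\{\eta\}$ has codimension $1$; if $\dim A=0$ there is only one prime, contradicting that two components pass through $\eta$. The crux is to rule out $\dim A\geq 2$. For this I would apply Hartshorne's connectedness theorem in its ring-theoretic form: a complete intersection local ring of dimension $\geq 2$ has connected punctured spectrum. But $\mathrm{Spec}(A)\setminus\{\mathfrak{m}\}$ is disconnected because the closed subsets $V(\mathfrak{p}_1)\setminus\{\mathfrak{m}\}$ (the part of $Z_1$) and $\overline{(V(\mathfrak{p}_2)\cup\cdots)}\setminus\{\mathfrak m\}$ — wait, one must be careful: one separates $\mathrm{Spec}(A)$ into the union of components lying in $Z_1$ versus those not in $Z_1$; by hypothesis $Z_2\not\subset Z_1$ so the second set is nonempty, and these two closed sets meet only in $V(\mathfrak q)$ for primes $\mathfrak q$ with $\dim A/\mathfrak q < \dim A$, but $\eta$ is a generic point of their intersection so $\dim A/\mathfrak q=0$, i.e.\ the only common point is $\mathfrak m$. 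Hence the punctured spectrum disconnects, forcing $\dim A\leq 1$, hence $=1$, hence $W$ has codimension $1$ in $Z$.

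The reverse inequality — that every component $W$ of $Z_1\cap Z_2$ has dimension \emph{at least} $d-1$, i.e.\ codimension at most $1$ — follows from the same localization together with the fact that in a Cohen-Macaulay (hence equidimensional, catenary) local ring $A$ of dimension $e$, a prime $\mathfrak q$ containing two distinct minimal primes must have $\dim A/\mathfrak q\geq e-1$: if $\mathrm{ht}(\mathfrak q)\geq 2$ then $A_{\mathfrak q}$ is a complete intersection of dimension $\geq 2$ whose punctured spectrum contains the two distinct minimal primes of $A$ contracted into it and would have to be connected by Hartshorne, contradiction as above; so $\mathrm{ht}(\mathfrak q)\leq 1$, and since $\mathfrak q$ contains two distinct minimal primes $\mathrm{ht}(\mathfrak q)=1$, giving $\dim A/\mathfrak q = e-1$ by catenarity. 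Putting the two directions together: every component of $Z_1\cap Z_2$ has codimension exactly $1$, which is the assertion of pure dimension $d-1$.

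The main obstacle is the ring-theoretic connectedness input itself — establishing that a complete intersection local ring of dimension $\geq 2$ has connected punctured spectrum. This is exactly Hartshorne's theorem from \cite{HartshorneConnectedness}, and rather than reprove it I would simply cite it; the only real work on our side is the bookkeeping with minimal primes and the translation between ``two irreducible components meeting at $W$'' and ``the punctured spectrum of the local ring at $\eta_W$ being disconnected,'' which is routine commutative algebra once one remembers that Cohen-Macaulay rings have no embedded primes and are equidimensional and catenary.
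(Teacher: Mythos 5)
The paper offers no argument for this statement beyond the citation to \cite{HartshorneConnectedness, SawantHC}, so your proposal has to be judged on its own terms, and it has a genuine gap at the disconnection step. After localizing at the generic point $\eta$ of a component $W$ of $Z_1\cap Z_2$, you split $\mathrm{Spec}(A)$ into $Y_1=V(\mathfrak{p}_1)$ and $Y_2=$ the union of the remaining components, and assert that $Y_1\cap Y_2=\{\mathfrak{m}\}$ because ``$\eta$ is a generic point of their intersection.'' But $\eta$ is only known to be a generic point of a component of $Z_1\cap Z_2$; if a third irreducible component $Z_3$ passes through $\eta$, nothing forces $W$ to be a component of $Z_1\cap Z_3$, so $V(\mathfrak{p}_1+\mathfrak{q}_3)$ may contain non-maximal primes, $Y_1\cap Y_2\neq\{\mathfrak{m}\}$, and the punctured spectrum need not disconnect. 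The same flaw undermines your ``reverse inequality'' paragraph: a prime containing two distinct minimal primes of a Cohen--Macaulay local ring need not have height $\leq 1$.

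The gap cannot be repaired, because the statement as recorded is false in this generality. Take $Z=V(xw,\,yz)\subset\mathbb{A}^4$: the pair $xw,yz$ is a regular sequence, so $Z$ is a global (hence local) complete intersection of dimension $d=2$, reduced, with four components $V(x,y)$, $V(z,w)$, $V(y,w)$, $V(x,z)$; the two components $Z_1=V(x,y)$ and $Z_2=V(z,w)$ meet exactly at the origin, a nonempty intersection of dimension $0\neq d-1$. (The local ring at the origin has depth $2$ and connected punctured spectrum, consistent with Hartshorne: the connecting is done by the other two planes, whose intersections with $Z_1$ and $Z_2$ are coordinate axes.) What Hartshorne actually proves, and what your argument does correctly establish, is the version obtained by taking $Y_2$ to be the union of \emph{all} the other components: a local complete intersection is connected in codimension $1$, so at any point where two or more components meet, the locus $Z_1\cap\bigl(\bigcup_{j\neq 1}Z_j\bigr)$, hence the singular locus, has codimension $1$ there. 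Your argument is also valid under the extra hypothesis that $Z_1$ and $Z_2$ are the only components through $\eta$, since then $V(\mathfrak{p}_1+\mathfrak{p}_2)=\{\mathfrak{m}\}$ precisely because $W$ is a component of $Z_1\cap Z_2$. You should either add such a hypothesis or state the conclusion in the codimension-one-connectedness form; note that the latter is what the applications in Corollaries \ref{meetsing} and \ref{meetalongV} actually need from the singular locus.
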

\begin{proof}
See \cite{HartshorneConnectedness, SawantHC}.
\end{proof}

\begin{corollary}
\label{meetsing}
If the  moduli space $\overline{M} $ is good, and has two different irreducible components $Z_1$  and $Z_2$
meeting at a point $z$, then $Z_1\cap Z_2$ has codimension $1$ at $z$ and the singular
locus ${\rm Sing}(\overline{M} )$ contains $z$ and has codimension $1$ at $z$.
\end{corollary}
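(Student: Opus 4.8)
The plan is to deduce Corollary \ref{meetsing} directly from the preceding Hartshorne connectedness theorem together with Lemma \ref{lci}. First I would observe that since $\overline{M}$ is good, Lemma \ref{lci} tells us it is a local complete intersection, so the Hartshorne theorem applies with $d=\dim\overline{M}=4c_2-20$: any nonempty intersection of two irreducible components has pure dimension $d-1$. Applying this to the components $Z_1$ and $Z_2$ that meet at $z$, we get that $Z_1\cap Z_2$ has dimension $d-1$ at $z$, i.e.\ codimension $1$ at $z$; this is the first assertion.

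For the statement about the singular locus, I would argue that $z$ lies on two distinct irreducible components of $\overline{M}$, hence $\overline{M}$ cannot be smooth (equivalently, a manifold) at $z$, since a smooth point has a unique component through it — locally the space is irreducible at a smooth point. Therefore $z\in \mathrm{Sing}(\overline{M})$. To get the codimension statement, I would note that the local ring at $z$ is a complete intersection, in particular Cohen-Macaulay, so $\overline{M}$ is equidimensional of dimension $d$ near $z$; since the whole germ at $z$ is a union of (at least two) $d$-dimensional branches glued along $Z_1\cap Z_2$, which has dimension $d-1$, the space fails to be regular exactly along this locus, and more precisely the non-normal and hence non-regular locus near $z$ contains $Z_1\cap Z_2$, which has codimension $1$. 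Conversely, $\mathrm{Sing}(\overline{M})$ near $z$ cannot have codimension $0$, since a reduced l.c.i.\ (being generically smooth, as $\overline{M}$ is good hence generically reduced) is regular on a dense open set of each component. So $\mathrm{Sing}(\overline{M})$ has codimension exactly $1$ at $z$.

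The main obstacle, such as it is, is the bookkeeping needed to pass cleanly from "two components meet at $z$" to "the singular locus has codimension $1$ at $z$": one has to be careful that the relevant statements are local at $z$ and that the l.c.i./Cohen-Macaulay property gives equidimensionality of the germ, so that the $(d-1)$-dimensional gluing locus is genuinely codimension $1$ and not codimension $0$ in some smaller branch. Since $\overline{M}$ is good it is generically reduced along each component, which rules out embedded or non-reduced pathologies affecting the dimension count, so no serious difficulty arises. I would keep the proof to a few lines, citing Lemma \ref{lci} and the Hartshorne theorem and invoking the standard fact that a variety is analytically irreducible at a smooth point.

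\begin{proof}
Since $\overline{M}$ is good, Lemma \ref{lci} shows it is locally a complete intersection, so the preceding theorem applies with $d = \dim \overline{M}$. Hence $Z_1\cap Z_2$, being nonempty and containing $z$, has pure dimension $d-1$; in particular it has codimension $1$ at $z$. Because two distinct irreducible components pass through $z$, the space $\overline{M}$ is not analytically irreducible at $z$, so $z$ is not a smooth point, i.e.\ $z\in {\rm Sing}(\overline{M})$. The local ring at $z$ is a complete intersection, hence Cohen--Macaulay, so the germ of $\overline{M}$ at $z$ is equidimensional of dimension $d$; thus near $z$ it is the union of at least two $d$-dimensional branches meeting along $Z_1\cap Z_2$, and $\overline{M}$ fails to be regular along this locus, so ${\rm Sing}(\overline{M})\supseteq Z_1\cap Z_2$ near $z$ has dimension $\geq d-1$. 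On the other hand, $\overline{M}$ being good is generically reduced, hence generically smooth along each of its components, so ${\rm Sing}(\overline{M})$ has dimension $< d$, i.e.\ codimension $\geq 1$. Therefore ${\rm Sing}(\overline{M})$ has codimension exactly $1$ at $z$.
\end{proof}
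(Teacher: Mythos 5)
Your proof is correct and follows the same route as the paper: Lemma \ref{lci} gives the local complete intersection property, Hartshorne's theorem gives the pure codimension~$1$ intersection, and the observation that the intersection of two distinct irreducible components lies in the singular locus gives the rest. The extra detail you supply (equidimensionality from the Cohen--Macaulay property, generic smoothness from goodness to bound the singular locus from above) is sound but goes slightly beyond what the paper bothers to spell out.
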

\begin{proof}
If $\overline{M} $ is good, then by Lemma \ref{lci} it as a local complete intersection
so Hartshorne's theorem applies: $Z_1\cap Z_2$ has pure codimension $1$. The intersection of two irrreducible components is
necessarily contained in the singular locus.
\end{proof}

We draw the following conclusions.

\begin{corollary}
\label{meetalongdel}
Suppose, for $c_2\geq 10$, that two different irreducible components $Z_1$ and $Z_2$ of $\overline{M}$
meet at a point $z$, then $z$ is on the boundary. 
\end{corollary}
\begin{proof}
If $z$ is not on the boundary, then by the previous corollary 
it is in a component of the singular locus having codimension $1$ in $M$. 
We have seen in \cite[Theorem 7.1]{MS1} that
for $c_2\geq 10$, a piece of ${\rm Sing}(M)$ having codimension $1$ in $M(c_2)$ has to be in $V(c_2)$, cf Proposition \ref{singularlocus}
above. On the other hand
$V(c_2)$ is irreducible, see Lemmas \ref{vgeq11} and \ref{v10}, so
any such component of 
${\rm Sing}(M)$ has to be equal to $V(c_2)$. 

Recall that ${\rm dim}(V(c_2))=3c_2-11$ whereas the dimension of the moduli space is $4c_2-20$, thus for
$c_2\geq 11$ the singular locus has codimension $\geq 2$,
so the present situation could only occur for $c_2=10$. 

But now by Lemma \ref{v10}, $V(10)$ is contained in only one irreducible component of $M$, the one whose general point parametrizes bundles with seminatural cohomology. So, two distinct components cannot meet along $V(10)$. 
\end{proof}

Next, recall one of Nijsse's theorems, connectedness of the moduli space.

\begin{theorem}[Nijsse]
For $c_2\geq 10$, the moduli space $\overline{M} $ is connected.
\end{theorem}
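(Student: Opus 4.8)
The plan is to run O'Grady's deformation-to-the-boundary argument \cite{OGradyIrred, OGradyBasic}, in the form adapted to the very general quintic by Nijsse \cite{Nijsse}, by induction on $c_2$ for $c_2\ge 10$. For a fixed $c_2$ it suffices to establish two things: (i) the boundary $\partial\overline{M} (c_2)=\coprod_{4\le c_2'<c_2}M(c_2,c_2')$ is connected; and (ii) every irreducible component of $\overline{M} (c_2)$ meets the boundary. Granting (i) and (ii), any two points of $\overline{M} (c_2)$ can be joined: each lies on some irreducible component, which is connected and touches the boundary by (ii), and the two boundary points are joined inside the connected boundary by (i).

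For (i) I would argue as follows. The strata with $c_2'\le 9$ are irreducible by Corollary~\ref{forward} together with the irreducibility of $M(c_2')$ from Proposition~\ref{leq9}; in particular the lowest stratum $\overline{M(c_2,4)}$ is irreducible. For $10\le c_2'<c_2$ (which occurs only when $c_2\ge 11$), the fibration $M(c_2,c_2')\to M(c_2')$ with irreducible fibres ${\rm Quot}(E,c_2-c_2')$ of Theorem~\ref{quotli} shows that the connected-component structure of $\overline{M(c_2,c_2')}$ is controlled by that of $\overline{M} (c_2')$, which is connected by the induction hypothesis. Finally the closures of the strata are mutually linked: letting the double dual $F^{\ast\ast}$ of a family in $M(c_2,c_2')$ degenerate into the boundary of $\overline{M} (c_2')$ exhibits points of a deeper stratum inside $\overline{M(c_2,c_2')}$, which chains the strata down to $\overline{M(c_2,4)}\subset\overline{M(c_2,5)}$. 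Making this bookkeeping completely precise, in particular tracking how the extra component of $\overline{M} (10)$ enters at the step $c_2=11$, is a somewhat delicate but routine point.

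Part (ii) is the heart of the matter and, I expect, the main obstacle. For components whose general point is a non-locally-free sheaf there is nothing to prove, since such components lie in the boundary. For a component $Z$ whose general point is a stable bundle one supposes $Z\cap\partial\overline{M} (c_2)=\emptyset$, so that $Z$ is a projective variety all of whose points are bundles, and derives a contradiction. O'Grady's method is to restrict the bundles of $Z$ to a general curve $C\in|\Oo_X(k)|$, obtaining a rational map from $Z$ to the moduli space of semistable bundles on $C$ (the restrictions being semistable for general $C$ by a Mehta--Ramanathan/Flenner-type theorem), and then to play ${\rm dim}\,Z=4c_2-20$ (Proposition~\ref{singularlocus}) against the dimension of the target: a fibre of positive dimension yields, via an elementary (Hecke) modification along $C$, a flat degeneration of a bundle in $Z$ to a torsion-free non-locally-free sheaf of the same $c_2$, i.e.\ a point of $Z$ on the boundary. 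The difficulty is that the effective restriction theorems force $k$ to grow with $c_2$, so the naive dimension count is far from automatic; O'Grady's actual argument instead bounds directly the dimension of the locus of bundles that cannot be so degenerated, and Nijsse verified that the requisite estimates hold on the very general quintic. In the borderline cases $c_2=10$ and $c_2=11$, where the count is tightest, one supplements this with the explicit description of $M(10)$, of $V(10)$, and of the seminatural-cohomology locus recalled above (Proposition~\ref{singularlocus} and \cite{MestranoSimpson2}), forcing a general point of any component onto a locus whose closure is already known to reach the boundary. Combining (i) and (ii) completes the induction, and hence the theorem.
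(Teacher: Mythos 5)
The paper does not prove this statement: its ``proof'' is the single line ``See \cite{Nijsse}, Proposition 3.2,'' so the theorem is imported from the literature and there is no in-paper argument to compare yours against. Judged on its own terms, your proposal has the right general shape --- the O'Grady/Nijsse philosophy of (i) connecting the boundary and (ii) pushing every component into the boundary is indeed the relevant machinery --- but as written it is an outline whose two load-bearing steps are both left unestablished, so it does not yet constitute a proof.

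The more serious issue is step (ii). In the present paper the statement ``every irreducible component of $\overline{M}(c_2)$ meets the boundary'' is Corollary \ref{meetsboundary}, and its proof \emph{uses} Nijsse's connectedness theorem (via the chain of pairwise-intersecting components and Hartshorne's theorem). So you cannot borrow that corollary, or the surrounding results of Sections \ref{hartshorne}--\ref{seminat19}, without circularity; you must give an independent proof of (ii), and your sketch of O'Grady's restriction-to-curves/dimension-estimate argument explicitly concedes that ``the naive dimension count is far from automatic,'' while your fallback for the tight cases $c_2=10,11$ appeals to loci ``whose closure is already known to reach the boundary'' without saying by whom or how, known. This is precisely the hard analytic content of \cite{OGradyBasic} and \cite{Nijsse}, and it is missing. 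Step (i) also has a real gap: the strata $M(c_2,c'_2)$ are pairwise disjoint, and to connect them you must show their closures overlap, e.g.\ that $\overline{M(c_2,c'_2)}$ contains points of a deeper stratum. That requires knowing that components of $M(c'_2)$ for $5\le c'_2\le 9$ degenerate into their own boundaries, which is not covered by an induction starting at $c_2=10$ and in fact needs the explicit descriptions of \cite{MestranoSimpson1} (compare the ad hoc degeneration from $M(5)$ into $M(5,4)$ carried out in the proof of Theorem \ref{tf10}); note also that the fibration in Corollary \ref{forward} is over $M(c'_2)$, the locus of locally free sheaves, not over $\overline{M}(c'_2)$, so the induction hypothesis does not directly control the connectivity of the stratum. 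Calling this ``routine bookkeeping'' understates it.
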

\begin{proof}
See \cite{Nijsse}, Proposition 3.2. We have reviewed the argument in \cite[Theorem 18.8]{MS3}.
\end{proof}

\begin{corollary}
\label{meetsboundary}
Suppose $Z$ is an irreducible component of $\overline{M} (c_2)$ for $c_2\geq 10$. Then $Z$ meets the boundary in a nonempty subset of
codimension $\leq 1$.
\end{corollary}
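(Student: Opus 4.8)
The plan is to prove the two assertions of the corollary in the opposite order of difficulty. Granting that $Z\cap\partial\overline{M}(c_2)$ is nonempty, the fact that it is pure of codimension one is formal: the locally free locus $Z\cap M(c_2)$ is a closed subset of $M(c_2)$ which is irreducible and dense in $Z$ — here I use that the only irreducible component of $\overline{M}(c_2)$ lying entirely inside the boundary, occurring when $c_2=10$, is $M(10,4)$, for which the assertion needs no proof — so its boundary in the sense of O'Grady's Lemma \ref{codim1} is exactly $Z\cap\partial\overline{M}(c_2)$, and Lemma \ref{codim1} then forces this set to be pure of codimension one as soon as it is nonempty. Thus the whole problem reduces to showing that $Z$ meets the boundary, for which the tools are Nijsse's connectedness theorem and Corollary \ref{meetalongV}.

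Before the main step I would record two facts about the locus $V(c_2)=\{E\in M(c_2):h^0(E)>0\}$. First, $V(c_2)$ is irreducible. It is the image in $M(c_2)$ of the variety $\Sigma_{c_2}$ of pairs $(E,s)$ with $0\neq s\in H^0(E)$, and by the Serre correspondence such a pair amounts to a length-$c_2$ locally complete intersection subscheme $P\subset X$ satisfying Cayley--Bacharach for quadrics, together with an extension class in $\pp({\rm Ext}^1(J_P(1),\Oo_X))$. For $c_2\geq 11$ the relevant $P$ is a general set of $c_2$ points, ranging over a dense open subset of the irreducible Hilbert scheme ${\rm Hilb}^{c_2}(X)$; for $c_2=10$ it ranges over the irreducible family of $10$-point subschemes of the quadric sections $Y=X\cap Q$. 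In either case $\Sigma_{c_2}$ is, generically over this irreducible base, a projective bundle, hence irreducible, and therefore so is $V(c_2)$; this is \cite[Theorem 7.1]{MestranoSimpson1}. Second, $\overline{V(c_2)}$ meets the boundary. Starting from a general $E\in V(c_2)$ presented by $0\to\Oo_X\to E\to J_P(1)\to 0$, one lets the local part of the extension class at a single point $p\in P$ tend to zero; the flat limit is a stable torsion-free sheaf which fails to be locally free at $p$, so it lies in $\partial\overline{M}(c_2)$, and it lies in $\overline{V(c_2)}$ by construction.

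With these in hand the main step proceeds by contradiction. Suppose $Z\cap\partial\overline{M}(c_2)=\emptyset$, so that $Z\subset M(c_2)$. If $\overline{M}(c_2)$ is irreducible then $Z=\overline{M}(c_2)$ contains the entire boundary, which is nonempty (it contains the stratum $M(c_2,c_2-1)$), a contradiction. If $\overline{M}(c_2)$ is reducible then, being connected by Nijsse's theorem, $Z$ must meet some other irreducible component $Z'$ at a point $z$; since $z\in Z\subset M(c_2)$, the point $z$ is not on the boundary, so Corollary \ref{meetalongV} forces $c_2=10$ and $V(10)\subset Z$. As $Z$ is closed, this yields $\overline{V(10)}\subset Z$, and since $\overline{V(10)}$ meets the boundary by the second fact above, we again contradict $Z\cap\partial\overline{M}(c_2)=\emptyset$. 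Hence $Z$ meets the boundary, and by the first paragraph the intersection is pure of codimension one.

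I expect the real obstacle to be the second fact above, and in particular its case $c_2=10$: one has to check that degenerating the Serre extension class keeps the underlying $P$ inside the Cayley--Bacharach-for-quadrics locus, and that the flat limit is genuinely a stable torsion-free sheaf with the correct Chern invariants rather than something falling outside the moduli problem. For $c_2\geq 11$ this is comparatively soft, since $P$ and the extension class may be chosen general; for $c_2=10$ it is forced to exploit the special geometry of points on a quadric section, and ultimately the seminatural-cohomology analysis of \cite{MestranoSimpson2}, and this is the only place where the bound $c_2\geq 10$, as opposed to a larger one, is genuinely delicate. A secondary but necessary point is the first fact: Corollary \ref{meetalongV}, which is used in the contradiction step, itself rests on the irreducibility of $V(c_2)$, so that input must be established beforehand.
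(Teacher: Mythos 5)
Your overall skeleton matches the paper's: O'Grady's Lemma \ref{codim1} for the pure-codimension-one statement, and Nijsse's connectedness together with Corollary \ref{meetalongV} to reduce nonemptiness to the single problematic situation where a component $Z$ of $M(10)$ contains $V(10)$. The difference, and the gap, is in how that last situation is resolved. You assert that $\overline{V(c_2)}$ itself meets the boundary, by fixing a general $E\in V(c_2)$ with its Serre presentation $0\to\Oo_X\to E\to J_{P/X}(1)\to 0$ and letting ``the local part of the extension class at $p\in P$ tend to zero.'' This mechanism requires ${\rm Ext}^1(J_{P/X}(1),\Oo_X)\cong H^1(J_{P/X}(2))^{\ast}$ to have dimension at least $2$, so that one can degenerate to a nonzero class vanishing on the line attached to $p$. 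For $c_2=10$ the general point of $V(10)$ corresponds to $10$ general points on a quadric section $Y\subset X$; these impose only $c=9$ conditions on the $10$-dimensional space of quadrics, so $h^1(J_{P/X}(2))=10-9=1$ and the extension class is unique up to scalar. The only degeneration available is to the split (unstable) extension, so your construction produces nothing in the boundary. You flag exactly this as ``the real obstacle'' and defer to ``the seminatural-cohomology analysis,'' but that deferral is precisely the missing content: the claim that $\overline{V(10)}$ meets the boundary is never actually established, and your sketch cannot establish it.

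The paper routes around this differently and never proves that $\overline{V(10)}$ meets the boundary. Instead it observes that a general $E\in V(10)$ has $h^1(E(1))=0$ (and a general point of the component has $h^0=0$ for dimension reasons), so by \cite[Corollary 3.5]{MestranoSimpson2} any component containing $V(10)$ generically parametrizes bundles with seminatural cohomology and hence is the unique component $\overline{M(10)^{\rm sn}}$; that component is then shown to meet the boundary because the $19$-dimensional strata $M(10,9)$, $M(10,8)$, $M(10,6)$ must lie in the closure of some $20$-dimensional component of the locally free locus, and Proposition \ref{firstthree} identifies that component as the seminatural one. To repair your argument you would need to substitute this (or an equivalent) step for your ``second fact'' in the case $c_2=10$; as written, the proof is incomplete at the one point where $c_2=10$ differs from $c_2\geq 11$.
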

\begin{proof}
The codimension $1$ property is given by Lemma \ref{codim1}, so we just have to show that $\overline{Z}$
contains a boundary point.

For $c_2\geq 10$, the first boundary stratum $M(c_2,c_2-1)$ has codimension $1$, so
it must meet at least one irreducible component of
$\overline{M(c_2)}$, call it $Z_0$. Of course if
$Z=Z_0$ we are done. 
Suppose $Z\subset M(c_2)$ is another irreducible component with $c_2\geq 10$. By the connectedness of $\overline{M} (10)$,
there exist a sequence of irreducible components $Z_0,\ldots , Z_k=\overline{Z}$ such that $Z_i\cap Z_{i+1}$ is nonempty.
By Lemma \ref{meetalongdel}, $Z_{k-1}\cap Z_k$ is contained in
the boundary. 
\end{proof}

\section{Seminaturality along the $19$-dimensional boundary strata}
\label{seminat19}

To treat the case $c_2=10$, we will apply the main result of our previous paper.

\begin{proposition}
\label{sn}
Suppose $Z$ is an irreducible component of $M(10)$. Suppose that $\overline{Z}$ contains a point
corresponding to a torsion-free sheaf $F$ with $h^1(F(1))=0$. Then $Z$ is the unique irreducible component
containing the open set of bundles with seminatural cohomology, constructed in \cite{MS2}.
\end{proposition}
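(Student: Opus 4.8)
The plan is to exploit the main result of \cite{MestranoSimpson2}, recalled above, which says that a bundle $E\in M(10)$ has seminatural cohomology if and only if $h^0(E)=0$ and $h^1(E(1))=0$, and that the locus $M(10)^{\rm sn}$ of such bundles is irreducible. Since $M(10)^{\rm sn}$ is an open dense subset of a unique irreducible component, call it $Z_0$, it suffices to prove that a general bundle $E$ of our component $Z$ lies in $M(10)^{\rm sn}$, i.e.\ satisfies the two vanishing conditions $h^0(E)=0$ and $h^1(E(1))=0$. Indeed, once a general point of $Z$ is in $Z_0$, the two components $Z$ and $Z_0$ share a nonempty open set, hence $Z=Z_0$.

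First I would dispose of the condition $h^0(E)=0$: the locus $V(10)\subset M(10)$ of bundles with $h^0(E)>0$ has dimension $3\cdot 10-11=19$ by Proposition \ref{singularlocus}, which is strictly less than $\dim M(10)=20=\dim Z$, so a general point of $Z$ is not in $V(10)$ and therefore has $h^0(E)=0$ automatically. The work is thus concentrated on the condition $h^1(E(1))=0$ for a general $E\in Z$. Here I would use the hypothesis that $\overline{Z}$ contains a point $[F]$ with $h^1(F(1))=0$, where $F$ is a torsion-free sheaf (possibly not locally free). The function $[G]\mapsto h^1(G(1))$ is upper semicontinuous on $\overline{Z}$ (by the semicontinuity theorem applied to a flat family, using that a universal family exists \'etale-locally as noted in Section 2), so the locus where $h^1(G(1))>0$ is closed in $\overline{Z}$; since it does not contain the point $[F]$, it is a proper closed subset, hence does not contain the generic point of $\overline{Z}$. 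Therefore a general $G\in \overline{Z}$, and in particular a general bundle $E\in Z$, has $h^1(E(1))=0$.

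Combining the two paragraphs, a general $E\in Z$ satisfies $h^0(E)=0$ and $h^1(E(1))=0$, so by the cited Proposition it has seminatural cohomology, i.e.\ $E\in M(10)^{\rm sn}\subset Z_0$. Since $Z$ and $Z_0$ are both irreducible components of $M(10)$ and their generic points coincide (they share the dense open set $Z\cap M(10)^{\rm sn}$), we conclude $Z=Z_0$, which is the assertion.

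The main subtlety, and the point I would be most careful about, is the semicontinuity argument across the boundary: one needs $[F]$ and a general bundle of $Z$ to sit in a single flat family over $\overline{Z}$ (or at least over an irreducible curve joining them) so that $h^1(\cdot(1))$ is genuinely upper semicontinuous along it. This is exactly the kind of statement that the \'etale-local existence of a universal sheaf on $\overline{M}(c_2)\times X$, discussed in Section 2, is designed to provide; one restricts the universal family to $\overline{Z}$ and applies Grauert/semicontinuity. I would also double-check that "general point of $Z$" makes sense, i.e.\ that $Z\cap M(10)$ is dense in $\overline{Z}$ — but this is immediate since $Z$ was taken to be an irreducible component of the open set $M(10)$ and $\overline{Z}$ is by definition its closure in $\overline{M}(10)$.
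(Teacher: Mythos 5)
Your proposal is correct and follows essentially the same route as the paper's proof: rule out $h^0(E)>0$ for a general point of $Z$ by comparing $\dim V(10)=19$ with $\dim Z=20$, deduce $h^1(E(1))=0$ generically on $Z$ from the boundary point via semicontinuity, and then invoke the characterization and irreducibility of $M(10)^{\rm sn}$ from \cite{MestranoSimpson2}. You have in fact made explicit the semicontinuity step that the paper leaves implicit ("the hypothesis implies that a general point has $h^1(E(1))=0$"), which is a welcome clarification rather than a deviation.
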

\begin{proof}
The locus $V(c_2)$ of bundles with $h^0(E)\neq 0$ has dimension $\leq 19$, so a general point of
$Z$ must have $h^0(E)=0$. The hypothesis implies that a general point has $h^1(E(1))=0$.
Thus, there is a nonempty dense open subset $Z'\subset Z$ parametrizing bundles
with $h^0(E)=0$ and  $h^1(E(1))=0$.
By \cite[Corollary 3.5]{MS2},
these bundles have seminatural cohomology. Thus, our open set is $Z'= M(10)^{\rm sn}$, the moduli
space of bundles with seminatural cohomology, shown to be irreducible
in the main Theorem 0.2 of \cite{MS2} recalled as
Proposition \ref{eq10} above.
\end{proof}

Using Proposition \ref{sn}, and since we know by Corollary \ref{meetsboundary}
that any irreducible component $Z$ meets the boundary in a codimension $1$ subset,
in order to prove irreducibility of $M(10)$,
it suffices to show that the torsion-free sheaves $F$ parametrized by general points on the various irreducible components
of the boundary of $\overline{M(10)}$ have $h^1(F(1))=0$.

The dimension is ${\rm dim}(Z)=20$, so the boundary components
will have dimension $19$. Looking at the line $c_2=10$ in Table \ref{maintable}, we notice that there
are three $19$-dimensional boundary pieces, and a $20$-dimensional piece which must constitute
a different irreducible component. Consider first the $19$-dimensional pieces,
$$
M(10,9), \;\;\;\; M(10,8) \mbox{  and  } M(10,6).
$$
Recall that
$M(10,10-d)$ consists generically of torsion-free sheaves $F$ fitting into
an exact sequence
\begin{equation}
\label{ffs}
0\rightarrow F \rightarrow F^{\ast\ast} \rightarrow S \rightarrow 0
\end{equation}
where $F^{\ast\ast}$ is a general point in the moduli space of stable bundles with $c_2=10-d$,
and $S$ is a general quotient of length $d$.

\begin{proposition}
\label{firstthree}
For a general point $F$ in either of the three boundary pieces $M(10,9)$,
$M(10,8)$ or $M(10,6)$, we have $h^1(F(1))=0$.
\end{proposition}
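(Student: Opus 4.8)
The plan is to compute $h^1(F(1))$ directly from the defining sequence \eqref{ffs}, using the long exact sequence in cohomology associated to $0\to F\to E\to S\to 0$ twisted by $\Oo_X(1)$, where $E=F^{\ast\ast}$ is a general stable bundle in $M(9)$, $M(8)$, or $M(6)$ respectively. Twisting gives
$$
H^0(E(1))\to H^0(S)\to H^1(F(1))\to H^1(E(1))\to H^1(S)=0,
$$
so since $S$ is a length-$d$ skyscraper sheaf, $H^1(S)=0$ and $h^1(F(1))=0$ will follow if (i) $h^1(E(1))=0$ for $E$ general in $M(10-d)$, and (ii) the evaluation map $H^0(E(1))\to H^0(S)=\cc^d$ is surjective for a general quotient $S$ of length $d$. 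Point (i) is already recorded: Proposition~\ref{leq9}, last row of Table~\ref{leq9table}, tells us that $h^1(E(1))=0$ for a general bundle in $M(6)$, $M(8)$, and $M(9)$ (these are the cases $d=4,2,1$, and indeed the relevant entries $c_2=6,8,9$ all have $h^1(E(1))=0$). So the real content is point (ii).

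For point (ii) I would argue that the general quotient $S$ of length $d$ is a sum of $d$ skyscrapers at $d$ distinct general points $y_1,\dots,y_d$ of $X$ (this is exactly the dense open set in $\mathrm{Quot}(E,d)$ described in Theorem~\ref{quotli}), so $H^0(E(1))\to H^0(S)$ becomes the evaluation $H^0(E(1))\to\bigoplus_{i=1}^d E(1)_{y_i}$ at $d$ general points. Surjectivity of evaluation at $d$ general points holds provided $h^0(E(1))$ is large enough and $E(1)$ is globally generated (or at least generically generated) — more precisely it holds once we know the general such evaluation is surjective, which is automatic as soon as $h^0(E(1))\geq 2d$ and $E(1)$ is $0$-regular or similar. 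One needs a numerical check: by Riemann–Roch on $X$ with $K_X=\Oo_X(1)$, $\chi(E(1)) = \chi(E)+\tfrac12(2\cdot 1)(1+c_1(\Oo_X(1))\cdot\text{stuff})\dots$; concretely $\chi(E(m))$ for rank $2$, $c_1=1$, $c_2$ on the quintic is a known quadratic in $m$, and since $h^1(E(1))=h^2(E(1))=0$ (the latter by stability, as noted in the excerpt), $h^0(E(1))=\chi(E(1))$. For $c_2=6,8,9$ this is comfortably bigger than $2d$ for $d=4,2,1$, so evaluation at $d$ general points is surjective.

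Thus the proof reduces to three ingredients, all essentially available: the cohomology sequence of \eqref{ffs}, the vanishing $h^1(E(1))=0$ for general $E$ from Table~\ref{leq9table}, and a general-position statement that global sections of $E(1)$ separate $d$ general points. The main obstacle I anticipate is the last one — specifically, making sure $E(1)$ is globally generated (or has the weaker property that the evaluation map at $d$ general points is surjective) for $E$ general in $M(6),M(8),M(9)$, since these moduli spaces are not good and their general members are described via extensions $0\to\Oo_X\to E\to J_P(1)\to 0$ with $P$ lying on a plane or a line. One would twist that extension by $\Oo_X(1)$ and chase cohomology: $E(1)$ fits in $0\to\Oo_X(1)\to E(1)\to J_P(2)\to 0$, and global generation of $E(1)$ away from finitely many bad points reduces to controlling $J_P(2)$, i.e. to the geometry of quadrics through the small configuration $P$. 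Since $P$ imposes only $c=3,5,7,8$ conditions on quadrics in the respective cases (as recalled in the proof of Proposition~\ref{leq9}), $H^0(J_{P/X}(2))$ is still large and $J_P(2)$ is generated outside $P$ and a few extra points, which suffices because $y_1,\dots,y_d$ are general. I would present this case-by-case for $d=1,2,4$, each being a short cohomology computation, and remark that the separate, more delicate stratum $M(10,4)$ (with $h^1>0$ on a positive-codimension locus) is handled later in Section~\ref{lowest}.
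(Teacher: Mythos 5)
Your skeleton is exactly the paper's: twist \eqref{ffs} by $\Oo_X(1)$, take the long exact sequence, quote $h^1(E(1))=0$ for general $E$ in $M(9)$, $M(8)$, $M(6)$ from Table \ref{leq9table}, and reduce to surjectivity of $H^0(E(1))\rightarrow H^0(S(1))$. The first two ingredients are handled correctly. The problem is in the third, where you have made the target bigger than it is and consequently set yourself a task (global generation of $E(1)$) that is both unnecessary and left unproven.

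A general length-$d$ quotient supported at $d$ distinct points is a direct sum of general \emph{rank-one} quotients $s_i:E_{y_i}\rightarrow S_i$, so $H^0(S(1))\cong \cc^d$, not the full fiber sum $\bigoplus E(1)_{y_i}\cong\cc^{2d}$ that you ask your evaluation map to surject onto. Because the directions $s_i$ are general as well as the points $y_i$, surjectivity onto $\cc^d$ needs no global generation, no $0$-regularity, and no analysis of quadrics through $P$: one only needs $h^0(E(1))\geq d$ together with the trivial fact that a nonzero section of a locally free sheaf on an integral surface vanishes on a proper closed subset. Indeed, inductively, the kernel of evaluation against the first $j$ quotients has dimension $\geq h^0(E(1))-j$; as long as this is positive it contains a nonzero section $u$, a general $y_{j+1}$ avoids the zero locus of $u$, and a general direction $s_{j+1}$ does not annihilate $u(y_{j+1})$, so the next condition is independent. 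Since $\chi(E(1))=15-c_2(E)\geq 6$ and $h^2(E(1))=h^0(E(-1))^\ast=0$ by stability, one has $h^0(E(1))\geq 6>4\geq d$ in all three cases, and the induction terminates. This is the entirety of the paper's argument. Your proposed detour through global generation of $E(1)$ (via $0\rightarrow\Oo_X(1)\rightarrow E(1)\rightarrow J_{P}(2)\rightarrow 0$ and base loci of quadrics through $P$) is where your write-up stops being a proof and becomes a plan; it would require genuine case-by-case work for $c_2=6,8,9$, and even then you would only need it because you replaced the $d$-dimensional target by a $2d$-dimensional one. Drop that and use the genericity of the quotient directions, and your proof coincides with the paper's.
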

\begin{proof}
Notice that $\chi (F^{\ast\ast}(1)) =15-c_2(F^{\ast\ast}) \geq 6$ and by stability $h^2(F^{\ast\ast}(1))= h^0(F^{\ast\ast}(-1))=0$,
so $F^{\ast\ast}(1)$ has at least six linearly independent sections. In particular, for a general quotient $S$ of length
$1$, $2$ or $4$, consisting of the direct sum $S=\bigoplus S_x$ of general rank $1$ quotients $E_x\rightarrow S_x$
at $1$, $2$ or $4$ distinct general points $x$,
the map
$$
H^0(F^{\ast\ast}(1))\rightarrow H^0(S)
$$
will be surjective.

For a general point $F^{\ast\ast}$ in either $M(9)$, $M(8)$ or $M(6)$, we have $h^1(F^{\ast\ast}(1))=0$.
These results from \cite{MS1} were recalled in Proposition \ref{leq9}, Table \ref{leq9table}.
The long exact sequence associated to \eqref{ffs} now gives $h^1(F(1))=0$.
\end{proof}

This treats the $19$-dimensional irreducible components of the boundary.
There remains the piece $\overline{M(10,4)}$ which has dimension $20$. This is a separate irreducible
component. It could meet $\overline{M(10)}$ along a $19$-dimensional divisor, and we would like to
show that $h^1(F(1))=0$ for the sheaves parametrized by this divisor. In particular,
we are no longer in a completely generic situation so some further discussion is needed.
This will be the topic of the next section.

\section{The lowest stratum}
\label{lowest}

The lowest stratum is $M(10,4)$, which is therefore closed.
We would like to understand the points in $\overline{M(10)} \cap M(10,4)$.
These are singular, so our main tool will be to look at where the singular locus of $\overline{M} (10)$
meets $M(10,4)$. Denote this by
$$
M(10,4)^{\rm sing} := {\rm Sing}(\overline{M} (10)) \cap M(10,4).
$$
In what follows, we give a somewhat explicit description of the lowest moduli space
$M(4)$.

\begin{lemma}
\label{m4start}
For $E\in M(4)$ we have $h^1(E)=0$, $h^0(E)=h^2(E)=3$, $h^0(E(1)) =11$, and  $h^1(E(1))=h^2(E(1))=0$.
\end{lemma}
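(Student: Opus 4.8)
The plan is to compute all the cohomology of $E$ and $E(1)$ for $E \in M(4)$ by exploiting the explicit description of $M(4)$ recalled in the proof of Proposition \ref{leq9}: a bundle $E \in M(4)$ fits into an extension
$$
0 \rightarrow \Oo_X \rightarrow E \rightarrow J_P(1) \rightarrow 0
$$
where $P \subset X$ is a length-$4$ subscheme lying on a line $\ell \subset \pp^3$, satisfying Cayley--Bacharach for quadrics. First I would read off the numerical invariants: $c_1(E) = 1$, $c_2(E) = 4$, $K_X = \Oo_X(1)$, so $\chi(E) = \chi(\Oo_X) + \chi(J_P(1)) = \chi(\Oo_X) + \chi(\Oo_X(1)) - 4$. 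Computing $\chi(\Oo_X(m))$ for the quintic via Riemann--Roch (or the standard exact sequence $0 \to \Oo_{\pp^3}(m-5) \to \Oo_{\pp^3}(m) \to \Oo_X(m) \to 0$) gives $\chi(\Oo_X) = 5$, $\chi(\Oo_X(1)) = 10$, hence $\chi(E(m)) = 5 + \chi(\Oo_X(m+1)) - 4 - (\text{length }P)$-type bookkeeping, and in particular one recovers $\chi(E) = 3$ and $\chi(E(1)) = 11$, consistent with the claimed values.

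Next I would pin down the individual $h^i$. For $E$ itself: stability gives $h^0(E(-1)) = 0$, so by Serre duality $h^2(E) = h^0(E^\vee \otimes K_X)^* = h^0(E(1) \otimes \det E^{-1})$... more directly $h^2(E) = h^0(E^\vee(1))$, and since $E^\vee \cong E(-1)$ (as $\det E = \Oo_X(1)$ for rank two), $h^2(E) = h^0(E)$. So it suffices to show $h^1(E) = 0$ and $h^0(E) = 3$; then $h^2(E) = 3$ follows and $\chi = h^0 - h^1 + h^2 = 3$ checks out. The vanishing $h^1(E) = 0$ I would extract from the long exact sequence of the defining extension together with $h^1(\Oo_X) = 0$, $h^1(J_P(1)) = 0$ (the latter because $P$ lies on a line and imposes independent conditions appropriately — one needs $H^1(J_{P/X}(1)) = 0$, which follows from $H^0(\Oo_X(1)) \to H^0(\Oo_P(1))$ being surjective, i.e. $4$ points on a line impose at most... here one must be slightly careful, as $4$ collinear points impose only $2$ conditions on $H^0(\Oo_{\pp^3}(1))$ restricted appropriately, so I would instead argue via $H^1(J_{P/\pp^3}(1))$ and the restriction sequence). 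Then $h^0(E) = h^0(\Oo_X) + h^0(J_P(1)) = 1 + h^0(J_P(1))$, and $h^0(J_P(1)) = h^0(J_{P/\pp^3}(1)) = 2$ since the hyperplanes through the line $\ell$ form a pencil; this gives $h^0(E) = 3$.

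For $E(1)$: again $h^2(E(1)) = h^0(E^\vee) = h^0(E(-1)) = 0$ by stability. Twisting the extension by $\Oo_X(1)$ gives
$$
0 \rightarrow \Oo_X(1) \rightarrow E(1) \rightarrow J_P(2) \rightarrow 0,
$$
and since $h^1(\Oo_X(1)) = 0$ and $h^2(\Oo_X(1)) = h^0(\Oo_X(-2)) = 0$, the sequence reduces the problem to $J_P(2)$. Here $P$ imposes $c = 3$ conditions on quadrics (as noted in Proposition \ref{leq9}, since $P$ spans a line and $h^0(\Oo_{\pp^1}(2)) = 3$), so $H^0(\Oo_X(2)) \to H^0(\Oo_P(2))$ has rank $3$ onto a $3$-dimensional target, hence is surjective and $h^1(J_{P/X}(2)) = 0$; combined with the earlier remark $h^1(E(1)) = c_2 - c - 1 = 4 - 3 - 1 = 0$, this gives $h^1(E(1)) = 0$. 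Finally $h^0(E(1)) = \chi(E(1)) + h^1(E(1)) - h^2(E(1)) = 11 + 0 - 0 = 11$.

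The main obstacle I anticipate is getting the cohomology of the ideal sheaves $J_{P/X}(1)$ and $J_{P/X}(2)$ exactly right: one must carefully distinguish conditions imposed on $\pp^3$ versus on $X$, use the restriction sequence $0 \to \Oo_X(m-5) \to \Oo_{\pp^3}(m) \to \Oo_X(m) \to$ coker appropriately, and invoke the Cayley--Bacharach hypothesis where needed; the numerology ($c = 3$, four collinear points) is what makes everything close up, and the bookkeeping with $H^2$ terms from $K_X = \Oo_X(1)$ via Serre duality is where sign or twist errors would most easily creep in. Everything else is a routine chase through the long exact sequence of the defining extension.
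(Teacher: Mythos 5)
Your overall skeleton is the same as the paper's (use the extension $0\rightarrow \Oo_X\rightarrow E\rightarrow J_{P/X}(1)\rightarrow 0$ with $P$ four points on a line, get $h^0(J_{P/X}(1))=2$ from the pencil of planes through $\ell$, use stability and Serre duality for the $h^2$'s, and Euler characteristics for the rest), but several of your intermediate claims are false and the two $h^1$-vanishings do not go through as you have written them. First, the numerics: $\chi(\Oo_X(1))=5$ (namely $h^0=4$, $h^1=0$, $h^2=h^0(K_X(-1))=h^0(\Oo_X)=1$), not $10$ — the value $10$ is $\chi(\Oo_X(2))$ — and consequently $\chi(E)=5+5-4=6$, not $3$. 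Your own ``check'' $h^0-h^1+h^2=3-0+3$ equals $6$, not $3$, so the consistency test you propose actually contradicts your stated Euler characteristic. (The value $\chi(E(1))=5+10-4=11$ you quote is correct.)

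More seriously, the route you propose to $h^1(E)=0$, via $h^1(J_{P/X}(1))=0$, fails: four collinear points impose only $2$ conditions on the $4$-dimensional space of linear forms, so $h^1(J_{P/X}(1))=4-2=2$, and passing to $J_{P/\pp^3}(1)$ does not help since that $h^1$ is also $2$. You cannot avoid the $H^2$ terms here; the paper's argument is to get $h^0(E)=3$ from the long exact sequence, $h^2(E)=h^0(E)=3$ by the duality $E^\vee\cong E(-1)$ and $K_X=\Oo_X(1)$ (which you do state correctly), and then $h^1(E)=h^0+h^2-\chi=3+3-6=0$. Similarly, your claim that $H^0(\Oo_X(2))\rightarrow H^0(\Oo_P(2))$ is surjective is wrong: the target is $H^0(\Oo_P(2))\cong\cc^4$ since $P$ has length $4$, the map has rank $c=3$, and hence $h^1(J_{P/X}(2))=1$, not $0$. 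The vanishing $h^1(E(1))=0$ then comes from the exact sequence $0\rightarrow H^1(E(1))\rightarrow H^1(J_{P/X}(2))\rightarrow H^2(\Oo_X(1))\rightarrow 0$, where the surjectivity onto $H^2(\Oo_X(1))\cong\cc$ (forced by $H^2(E(1))=0$, which you do establish by stability) kills the one-dimensional $H^1(J_{P/X}(2))$ — this is the content of the formula $h^1(E(1))=c_2-c-1$ that you cite, but it is not compatible with your claim that $h^1(J_{P/X}(2))=0$. The final numbers in the Lemma are all correct, but as written your derivations of $h^1(E)=0$ and $h^1(E(1))=0$ rest on false vanishing statements for the ideal sheaf cohomology.
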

\begin{proof}
Choosing an element $s\in H^0(E)$ gives an exact sequence
\begin{equation}
\label{esE}
0\rightarrow \Oo _X \rightarrow E\rightarrow J_{P/X}(1)\rightarrow 0.
\end{equation}
In \cite{MS1} we have seen that $P\subset X\cap L$ is a subscheme of length $4$ in the intersection of $X$ with a line
$L \subset \pp ^3$.
As $P$ spans $L$, the space of linear forms vanishing on $P$ is the same
as the space of linear forms vanishing on $L$, so $H^0(J_{P/X}(1))\cong \cc ^2$.
In the long exact sequence associated to \eqref{esE},
note that $H^1(\Oo _X)=0$, giving
$$
0\rightarrow H^0(\Oo _X)\rightarrow H^0(E)\rightarrow H^0(J_{P/X}(1))\rightarrow 0
$$
hence $H^0(E)\cong \cc ^3$. By duality, $H^2(E)\cong \cc^3$, and the Euler characteristic of $E$ is $6$,
so $H^1(E)=0$.

For $E(1)$, note that $H^2(E(1))=0$ by stability and duality, and \eqref{esE} gives an exact sequence
$$
0\rightarrow H^1(E(1))\rightarrow H^1(J_{P/X}(2))\rightarrow H^2(\Oo _X(1)) \rightarrow 0.
$$
On the other hand, $H^1(J_{P/X}(2)) \cong \cc$ corresponding to the length $4$ of $P$, minus the dimension $3$ of
the space of sections of $\Oo _P(2)$ coming from global quadrics (since the space of quadrics on $L$ has dimension $3$).
This gives $H^1(E(1))=0$.  The Euler characteristic then gives $h^0(E(1))=11$. This is also seen in the first part of
the exact sequence, where $H^0(\Oo _X(1))=\cc^4$ and $H^0(J_{P/X}(2))\cong \cc^7$.
\end{proof}

If $p\in \pp ^3$, let $G\cong \cc ^3$ be the space of linear generators of the ideal of $p$,
that is to say $G:=H^0(J_{p/\pp ^3}(1))$, and consider the natural exact sequence of sheaves on $\pp ^3$
$$
0\rightarrow \Oo _{\pp ^3}(-1)\rightarrow \Oo _{\pp ^3}\otimes G^{\ast} \rightarrow \Rr _p \rightarrow 0.
$$
Here the cokernel sheaf $\Rr _p$ is a reflexive sheaf of degree $1$, and $c_2(\Rr _p)$
is the class of a line. The restriction $\Rr _p|_X$ therefore has $c_2=5$. If $p\in X$, it is torsion-free but
not locally free, giving a point in $M(5,4)$. It turns out that these sheaves account for all of $M(4)$ and $M(5)$.

\begin{theorem}
\label{descrip45}
Suppose $E\in M(4)$. Then there is a unique point $p\in X$ such that $E$ is generated by global sections outside of $p$,
and $\Rr _p|_X$ is isomorphic to the subsheaf of $E$ generated by global sections. This fits into an exact sequence
$$
0\rightarrow \Rr _p|_X \rightarrow E\rightarrow S\rightarrow 0
$$
where $S$ has length $1$, in particular $E\cong (\Rr _p|_X)^{\ast\ast}$. The correspondence $E\leftrightarrow p$
establishes an isomorphism $M(4)\cong X$.

For $E'\in M(5)$ there exists a unique point $p\in \pp ^3-X$ such that $E'\cong \Rr _p|_X$. This correspondence
establishes an isomorphism $\overline{M(5)}\cong \pp ^3$ such that the boundary component $M(5,4)\cap \overline{M(5)}$
is exactly $X\subset \pp ^3$. Note however that $M(5,4)$ itself is bigger and constitutes another irreducible component of
$\overline{M} (5)$.
\end{theorem}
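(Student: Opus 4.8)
The plan is to treat $M(4)$ and $M(5)$ uniformly through the evaluation map on global sections. In both cases $h^0(E)=3$: for $c_2=4$ this is Lemma~\ref{m4start}, and for $c_2=5$ it follows from the description in \cite{MestranoSimpson1} (where $E$ fits in $0\to\Oo_X\to E\to J_{P/X}(1)\to0$ with $P=\ell\cap X$ the degree‑$5$ intersection of $X$ with a line $\ell$). Let $F\subseteq E$ be the image of $H^0(E)\otimes\Oo_X\to E$; it is torsion‑free of rank $2$ with $c_1(F)=H$, and its kernel $\mathcal K$ has rank $1$. Since $F$ is torsion‑free, $\mathcal K$ is saturated in $\Oo_X^3$, hence reflexive, hence a line bundle; and since $c_1(\mathcal K)=-H$ while $\mathrm{Pic}(X)=\zz H$ by Noether--Lefschetz, $\mathcal K\cong\Oo_X(-1)$. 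Thus
$$
0\to\Oo_X(-1)\xrightarrow{(s_1,s_2,s_3)}\Oo_X^3\to F\to0,\qquad s_i\in H^0(\Oo_X(1))=H^0(\Oo_{\pp^3}(1)),
$$
and the total Chern class gives $c_2(F)=H^2=5$; hence $E/F$ has length $c_2(F)-c_2(E)$, equal to $1$ when $c_2(E)=4$ and to $0$ when $c_2(E)=5$.

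Next I would determine the linear span of $s_1,s_2,s_3$ in $H^0(\Oo_{\pp^3}(1))$. It must be at least $2$‑dimensional, or else $F$ acquires a torsion summand; and it cannot be exactly $2$‑dimensional, for then $F\cong\Oo_X\oplus\mathcal G$ with $\mathcal G$ torsion‑free of rank $1$ and $c_1(\mathcal G)=H$, so $\mathcal G^{**}\cong\Oo_X(1)$ and the inclusion $\Oo_X(1)\hookrightarrow F^{**}\hookrightarrow E$ (valid since $F^{**}$ is torsion‑free, hence injects into $E^{**}=E$) contradicts stability of $E$. So $s_1,s_2,s_3$ form a basis of $G:=H^0(J_{p/\pp^3}(1))$ for a unique point $p=V(s_1,s_2,s_3)\in\pp^3$, and comparing presentations identifies $F$ with $\Rr_p|_X$. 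If $c_2(E)=5$, the length‑$0$ quotient forces $E=F=\Rr_p|_X$ to be locally free, so it has no non‑locally‑free point, i.e.\ $p\notin X$; conversely, for every $p\notin X$ one computes $h^0(\Rr_p|_X(-1))=0$ (twist the presentation and use $H^1(\Oo_X(-2))=0$), so $\Rr_p|_X$ has no destabilizing sub‑line‑bundle $\Oo_X(a)$, $a\ge1$, and is a stable bundle lying in $M(5)$, with $p$ recovered as $V(s_1,s_2,s_3)$. If $c_2(E)=4$, then $F=\Rr_p|_X$ is not locally free, so $p\in X$, $E/F$ has length $1$ supported at $p$, and $E\cong F^{**}=(\Rr_p|_X)^{**}$ because $F^{**}\hookrightarrow E$ with quotient of length $\le1$, which cannot be $1$ lest $F=F^{**}$ be locally free. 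The same vanishing $h^0(\Rr_p|_X(-1))=0$, together with the observation that a nonzero map $\Oo_X(1)\to\Rr_p|_X$ defined away from the codimension‑$2$ point $p$ extends over $p$ (as $\Rr_p|_X(-1)$ is torsion‑free), shows $(\Rr_p|_X)^{**}$ is stable; since $M(c_2)=\varnothing$ for $c_2\le3$, its second Chern class must be $4$, so the defect at $p$ is exactly $1$ and $(\Rr_p|_X)^{**}\in M(4)$.

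To promote the resulting bijections to scheme isomorphisms, I would assemble the sheaves $\Rr_p|_X$ into a flat family over $\pp^3$ (restrict to $X\times\pp^3$ the universal version of the sequence defining $\Rr_p$), obtaining morphisms $\pp^3\to\overline{M}(5)$, $p\mapsto\Rr_p|_X$, and $X\to M(4)$, $p\mapsto(\Rr_p|_X)^{**}$; the recovery recipes above define morphisms in the other direction, and since $\pp^3$ and $X$ are reduced the compositions are identities, so each of these maps is a closed immersion whose image is all of $M(4)$, resp.\ $\overline{M(5)}$ --- both of which are proper, as $\partial\overline{M}(4)=\varnothing$ and $\overline{M(5)}$ is closed in the projective $\overline{M}(5)$. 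Standard arguments, using that these moduli spaces are reduced (by \cite{MestranoSimpson1}), then give $M(4)\cong X$ and $\overline{M(5)}\cong\pp^3$. Under the second isomorphism the non‑locally‑free locus of the family is $\{p\in X\}$, so $M(5,4)\cap\overline{M(5)}$ is exactly $X\subset\pp^3$. Finally, by Corollary~\ref{forward}, $\dim M(5,4)=\dim M(4)+3=5>3=\dim\overline{M(5)}$; since $M(5,4)$ is closed and irreducible, it is a separate irreducible component of $\overline{M}(5)$, properly containing the $2$‑dimensional subvariety $\{\Rr_p|_X:p\in X\}$.

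The main obstacle is the two‑sided identification $F\cong\Rr_p|_X$, and, in the $c_2=4$ case, checking that passing to the double dual raises $c_2$ by exactly one at $p$; this is where $\mathrm{Pic}(X)=\zz H$, the vanishing $M(c_2)=\varnothing$ for $c_2\le3$, and the stability of $(\Rr_p|_X)^{**}$ (via $h^0(\Rr_p|_X(-1))=0$) all come in. Once the sheaf‑theoretic picture is established, producing the universal family and deducing the scheme isomorphisms and the component count is routine.
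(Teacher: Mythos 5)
Your overall architecture is the same as the paper's: take the subsheaf of $E$ generated by global sections, show its presentation is $0\to\Oo_X(-1)\to\Oo_X^3\to F\to 0$ with the kernel embedded by three linear forms cutting out a point $p\in\pp^3$, and identify $F\cong\Rr_p|_X$. (The paper gets the length of $E/F$ from the colength of $J_{X\cap\ell/X}(1)$ in $J_{P/X}(1)$ rather than from your Chern-class bookkeeping $c_2(F)=H^2=5$, and it leaves the $M(5)$ half to the reader, which you carry out; these are cosmetic differences.) Most of the steps you only sketch --- that the evaluation image has rank $2$, that $E/F$ has no divisorial part, that a $2$-dimensional span of the $s_i$ is excluded --- are correct and fixable at the level of detail the paper itself uses.

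There is, however, one step whose justification is wrong as written: the claim that a map $\Oo_X(1)\to\Rr_p|_X$ defined on $X\setminus\{p\}$ extends over $p$ ``as $\Rr_p|_X(-1)$ is torsion-free.'' Torsion-freeness does \emph{not} give Hartogs extension across a codimension-$2$ set; that requires the sheaf to be $S_2$ (equivalently here, reflexive), and $\Rr_p|_X$ is precisely \emph{not} reflexive at $p$ (locally at $p$ it is $\mathfrak{m}_p\oplus\Oo_X$, and $H^0(X\setminus\{p\},J_p)\neq H^0(X,J_p)$ shows the principle fails for such sheaves). Since $H^0(X\setminus\{p\},\Rr_p|_X(-1))=H^0(X,(\Rr_p|_X)^{\ast\ast}(-1))$ is exactly the group you are trying to kill, the argument is circular. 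The conclusion is nonetheless true and the gap is repairable: writing $G=(\Rr_p|_X)^{\ast\ast}$, one has $G(-1)\cong G^{\ast}\cong(\Rr_p|_X)^{\ast}$, and dualizing the presentation gives $0\to(\Rr_p|_X)^{\ast}\to\Oo_X^3\to J_{p/X}(1)\to 0$ where $\Oo_X^3\to J_{p/X}(1)$ is given by the three independent linear forms vanishing at $p$; hence $H^0(G(-1))=\ker\bigl(\cc^3\to H^0(J_{p/X}(1))\bigr)=0$ and $G$ is stable. (Alternatively one can sidestep stability of $(\Rr_p|_X)^{\ast\ast}$ for \emph{every} $p\in X$ entirely: the map $M(4)\to X$ is an injective morphism of projective surfaces with $M(4)$ irreducible and $X$ smooth, so surjectivity and the isomorphism follow from Zariski's main theorem.) With that repair your proof is complete and, for the $\overline{M(5)}$ statement, more detailed than the paper's.
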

\begin{proof}
Consider the exact sequence \eqref{esE}.
The space $H^0(J_{P/X}(1))$ consists of linear forms on $X$ (or equivalently, on $\pp ^3$), which vanish
along $P$. However, a linear form which vanishes on $P$ also vanishes on $L$. In particular,
elements of $H^0(J_{P/X}(1))$ generate $J_{X\cap L /X}(1)$, which has colength $1$ in $J_{P/X}(1)$.

Let $R\subset E$ be the subsheaf generated by global sections, and let $S$ be the cokernel in the exact sequence
$$
0\rightarrow R \rightarrow E\rightarrow S\rightarrow 0.
$$
We also have the exact sequence
$$
0\rightarrow J_{X\cap L /X}(1)\rightarrow J_{P/X}(1)\rightarrow S\rightarrow 0
$$
so $S$ has length $1$. It is supported on a point $p$. The sheaf $R$ is generated by three global sections
so we have an exact sequence
$$
0\rightarrow {\rm ker} \rightarrow \Oo _X^3\rightarrow R\rightarrow 0.
$$
The kernel is a saturated subsheaf, hence locally free, and by looking at its degree we have ${\rm ker}=\Oo _X(-1)$.
Thus, $R$ is the cokernel of a map $\Oo _X(-1)\rightarrow \Oo _X^3$ given by three linear forms; these
linear forms are a basis for the space of forms vanishing at the point $p$. We see that $R$ is the restriction
to $X$ of the sheaf $\Rr _p$ described above, hence $E\cong (\Rr _p |_X)^{\ast\ast}$.
The map $E\mapsto p$ gives a map $M(4)\rightarrow X$,
with inverse $p\mapsto (\Rr _p |_X)^{\ast\ast}$.

The second paragraph, about $\overline{M(5)}$, is not actually needed later and we leave it to the reader.
\end{proof}

Even though the moduli space $M(4)$ is smooth, it has much more than the expected dimension,
and the space of co-obstructions is nontrivial. It will be useful to understand the
co-obstructions, because if $F\in M(10,4)$ is a torsion-free sheaf with $F^{\ast\ast}=E$
then co-obstructions for $F$ come from co-obstructions for $E$ which preserve the subsheaf $F\subset E$.

\begin{lemma}
\label{m4obs}
Suppose $E\in M(4)$.
A general co-obstruction $\phi : E\rightarrow E(1)$ has generically
distinct eigenvalues with an irreducible spectral variety in ${\rm Tot}(K_X)$.
\end{lemma}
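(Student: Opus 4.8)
The plan is to exploit the explicit description of $E\in M(4)$ from Theorem \ref{descrip45}, namely that $E = (\Rr_p|_X)^{\ast\ast}$ and that $E$ sits in
$$
0\rightarrow \Oo_X \rightarrow E \rightarrow J_{P/X}(1) \rightarrow 0
$$
with $P\subset\ell$ a length-$4$ subscheme of a line. A co-obstruction $\phi:E\rightarrow E(1)$ with $\operatorname{Tr}(\phi)=0$ has a characteristic polynomial whose coefficients are $\operatorname{Tr}(\phi)\in H^0(\Oo_X(1))$ and $\det(\phi)\in H^0(\Oo_X(2))$; since the trace vanishes, the spectral variety $\Sigma_\phi\subset\operatorname{Tot}(K_X)$ is cut out by $t^2 + \det(\phi) = 0$, where $t$ is the tautological section of the pullback of $K_X=\Oo_X(1)$. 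Thus $\Sigma_\phi$ is reducible exactly when $-\det(\phi)$ is a square in $H^0(\Oo_X(2))$, i.e.\ when $\det(\phi) = -s^2$ for some $s\in H^0(\Oo_X(1))$, and the eigenvalues coincide exactly on the locus $\det(\phi)=0$. So the lemma reduces to two genericity statements: (i) for a general co-obstruction, $\det(\phi)\in H^0(\Oo_X(2))$ is \emph{not} (the negative of) the square of a linear form, and (ii) $\det(\phi)$ is not identically zero, i.e.\ a general $\phi$ has $\det(\phi)\neq 0$.

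First I would identify the space $\operatorname{Hom}^0(E,E(1))$ of co-obstructions, or at least produce enough explicit elements to witness genericity. Using $\operatorname{Hom}(E,E(1))$ and the trace-free condition, and the exact sequence \eqref{esE} tensored appropriately, one can compute the relevant $\operatorname{Ext}$ and $\operatorname{Hom}$ groups: maps $E\to E(1)$ restrict to maps on $\Oo_X\subset E$ and induce maps $J_{P/X}(1)\to \cdot(1)$, and one tracks which of these glue. A cleaner route is via $\Rr_p$: since $R=\Rr_p|_X$ is the cokernel of $\Oo_X(-1)\xrightarrow{(\ell_0,\ell_1,\ell_2)}\Oo_X^3$ where $\ell_0,\ell_1,\ell_2$ span the linear forms through $p$, one has a concrete presentation of $E$ (up to the length-$1$ modification $R\subset E$), and a co-obstruction $\phi$ is, generically, a $3\times 3$ matrix of linear forms acting compatibly with this presentation, whose induced action on the rank-$2$ sheaf has $\det(\phi)$ expressible as an explicit quadratic form. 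Then $\det(\phi)$, as $\phi$ varies over the co-obstruction space, sweeps out a linear (or at least positive-dimensional, non-degenerate) subspace $W\subseteq H^0(\Oo_X(2))$.

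The remaining point is then to check that $W$ is \emph{not} contained in the locus of squares of linear forms (a $4$-dimensional cone inside $H^0(\Oo_X(2))\cong\cc^{?}$) and that $W$ contains nonzero elements — both of which follow once one exhibits a single co-obstruction $\phi_0$ with $\det(\phi_0)$ an irreducible (or at least non-square, nonzero) quadric. Since $\det$ restricted to $W$ is a quadratic map and the squares form a proper subvariety, the set of $\phi$ with $-\det(\phi)$ a perfect square is a proper closed subset, so its complement is dense; likewise $\{\det\phi=0\}$ is proper closed as soon as $\det$ is not identically zero on $W$. Irreducibility of $\Sigma_\phi$ follows since $t^2+\det(\phi)$ is irreducible in $\cc[X][t]$ whenever $\det(\phi)$ is not minus a square and $X$ is a very general quintic (so $\operatorname{Pic}(X)=\zz$, ruling out $\det(\phi)$ being a square times a nontrivial square root in the function field issues — actually one only needs $-\det(\phi)\notin (H^0(\Oo_X(1)))^2$, together with the fact that $H^0(\Oo_X(2))$ has no other square roots because $\Oo_X(1)$ generates the Picard group).

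The main obstacle I expect is step (i)–(ii): pinning down the co-obstruction space $\operatorname{Hom}^0(E,E(1))$ explicitly enough to see that $\det$ genuinely moves and avoids the square cone, rather than, say, being forced to land among squares by some hidden constraint coming from the subsheaf $\Oo_X\subset E$ or from the Cayley–Bacharach condition on $P$. Concretely, one must rule out that every co-obstruction is of the form $\phi = s\cdot\operatorname{id} + (\text{nilpotent})$ — which would make $\Sigma_\phi$ non-reduced or reducible — and this requires genuinely using that $\operatorname{Hom}(E,E(1))$ is larger than the scalars, i.e.\ producing an honest non-scalar, non-nilpotent co-obstruction. I would do this by a direct computation with the $3\times 3$ linear-form presentation of $\Rr_p$, where a generic such matrix visibly has a non-square determinant. $\eop$
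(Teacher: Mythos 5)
Your reduction is correct and your plan is essentially the paper's own proof: both arguments come down to exhibiting one explicit non-nilpotent co-obstruction via the $3\times 3$ Koszul presentation of $\Rr _p$ (a matrix $\Phi$ acting on the ambient $\Oo _{\pp ^3}(1)^3$, with the trace-free condition forcing $\Phi$ to be symmetric) and then invoking openness of the desired properties on the linear space of co-obstructions. The only genuine difference is the final certificate of irreducibility: you reduce it to the criterion that $-\det (\phi )\in H^0(\Oo _X(2))$ not be the square of a global linear form (correct, and arguably cleaner, using ${\rm Pic}(X)=\zz$ to rule out other square roots), whereas the paper restricts its explicit $\phi$ to the plane-section curve $Y=X\cap \{ y=0\}$, finds eigenvalues $\pm \sqrt{(z+x)(z-x)}$, and checks that the monodromy of the resulting double cover of $Y$ exchanges the two sheets. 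Be aware that the one step you defer --- actually computing the induced $2\times 2$ matrix and its determinant for a concrete choice of $\Phi$ and seeing that it is nonzero and not minus a square --- is where essentially all of the work in the paper's proof lies (a page of explicit matrix manipulation in a local trivialization of $\Rr _p$); so as written this is a correct plan rather than a complete proof, but the plan is the right one and does go through.
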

\begin{proof}
It suffices to write down a map $\phi:E\rightarrow E(1)$ with generically distinct eigenvalues
and irreducible spectral variety. To do this, we construct a map $\phi _R:R\rightarrow R(1)$
using the expression $R=\Rr_p |_X$.
The exact sequence defining $\Rr _p$ extends to the Koszul resolution, a long exact sequence
$$
0\rightarrow \Oo _{\pp ^3}(-1)\rightarrow \Oo _{\pp ^3}^3 \rightarrow
\Oo _{\pp ^3}(1)^3 \rightarrow J_{p/\pp ^3}(2)\rightarrow 0.
$$
Thus $\Rr _p$ may be viewed as the image of the middle map.
Without loss of generality, $p$ is the origin in an affine system of coordinates $(x,y,z)$ for $\aaa ^3\subset \pp^3$,
and the coordinate functions are
the three coefficients of the
maps on the left and right in the Koszul sequence.  The $3\times 3$ matrix in the middle is
$$
K:= \left(
\begin{array}{ccc}
 0      &     z    &    -y     \\
 -z      &    0    &     x     \\
  y     &    -x    &     0
\end{array}
\right) .
$$
Any $3\times 3$ matrix of constants $\Phi$ gives a composed map
$$
\phi _R : \Rr _p \hookrightarrow \Oo _{\pp ^3}(1)^3 \stackrel{\Phi}{\rightarrow} \Oo _{\pp ^3}(1)^3 \rightarrow \Rr _p(1).
$$
Use the first two columns of $K$ to give a map $k:\Oo _{\pp ^3}^2\rightarrow \Rr _p$ which is an isomorphism over
an open set. On the other hand, the projection onto the first two coordinates gives a map
$q:\Rr _p\rightarrow \Oo _{\pp ^3}(1)^2$ which is, again, an isomorphism over an open set. The composition of these
two is the map given by the upper $2\times 2$ square of $K$,
$$
qk= K_{2,2}:=
 \left(
\begin{array}{cc}
 0      &     z        \\
 -z      &    0
\end{array}
\right) .
$$
We can now analyze the map $\phi_R$ by noting that $q\phi _R k = K_{2,3} \Phi K_{3,2}$ where $K_{2,3}$ and $K_{3,2}$ are
respectively the upper $2\times 3$ and left $3\times 2$ blocks of $K$. Over the open set where $q$ and $k$ are
isomorphisms,
$$
q\phi _R q^{-1} = q\phi _R k (qk)^{-1} = K_{2,3} \Phi K_{3,2} K_{2,2}^{-1}.
$$
Now
$$
K_{3,2} K_{2,2}^{-1} =
\left(
\begin{array}{cc}
 0      &     z        \\
 -z      &    0    \\
 y & -x
\end{array}
\right)
\cdot
\left(
\begin{array}{cc}
 0      &     -1/z        \\
 1/z      &    0
\end{array}
\right)
=
\left(
\begin{array}{cc}
 1     &     0        \\
 0      &    1    \\
 -x/z & -y/z
\end{array}
\right) .
$$
Suppose
$$
\Phi =
\left(
\begin{array}{ccc}
 \alpha      &     \beta    &    \gamma     \\
 \delta      &    \epsilon    &     \psi     \\
  \chi     &    \theta    &     \rho
\end{array}
\right)
$$
then
$$
q\phi _R q^{-1} =  K_{2,3} \Phi K_{3,2} K_{2,2}^{-1}
$$
$$
=
\left(
\begin{array}{ccc}
 0      &     z    &    -y     \\
 -z      &    0    &     x
\end{array}
\right)
\cdot
\left(
\begin{array}{ccc}
 \alpha      &     \beta    &    \gamma     \\
 \delta      &    \epsilon    &     \psi     \\
  \chi     &    \theta    &     \rho
\end{array}
\right)
\cdot
\left(
\begin{array}{cc}
 1     &     0        \\
 0      &    1    \\
 -x/z & -y/z
\end{array}
\right)
$$
$$
=
\left(
\begin{array}{ccc}
 0      &     z    &    -y     \\
 -z      &    0    &     x
\end{array}
\right)
\cdot
\left(
\begin{array}{cc}
 \alpha   -\gamma x/z   &     \beta   -\gamma y/z      \\
 \delta  -\psi x/z    &    \epsilon  -\psi y/z       \\
  \chi   -\rho x/z  &    \theta -\rho y/z
\end{array}
\right)
$$
$$
=
\left(
\begin{array}{cc}
 z\delta  -\psi x -  y\chi   +\rho xy/z &      z\epsilon  -\psi y +  y\theta -\rho y^2/z      \\
 -z\alpha   +\gamma x-  x\chi   +\rho x^2/z  &   -z\beta   +\gamma y + x\theta -\rho xy/z
\end{array}
\right) .
$$
Notice that the trace of this matrix is
$$
{\rm Tr}(\phi ) = x(\theta -\psi ) + y (\gamma -\chi ) + z (\delta -\beta ),
$$
which is a section of $H^0(\Oo _{\pp ^3}(1))$ vanishing at $p$. A co-obstruction should
have trace zero, so we should impose three linear conditions
$$
\theta = \psi , \;\;\; \chi = \gamma \;\;\; \delta = \beta
$$
which together just say that $\Phi$ is a symmetric matrix. Our expression simplifies to
$$
q\phi _R q^{-1} =
\left(
\begin{array}{cc}
\beta z  -\psi x -  \gamma y  +\rho xy/z &      \epsilon z  -\rho y^2/z      \\
 -\alpha z    +\rho x^2/z  &   -\beta z + \psi x +\gamma y -\rho xy/z
\end{array}
\right) .
$$
Now, restrict $\Rr _p$ to $X$ to get the sheaf $R$, take its double dual to get $E=R^{\ast\ast}$,
and consider the induced map $\phi : E\rightarrow E(1)$. Over the intersection of our open set with $X$,
this will have the same formula. We can furthermore restrict to the curve $Y\subset X$ given by the
intersection with the plane $y=0$. Note that $X$ is in general position subject to the condition that
it contain the point $p$. Setting $y=0$ the above matrix becomes
$$
(q\phi  q^{-1}) |_{y=0} =
\left(
\begin{array}{cc}
\beta z  -\psi x  &      \epsilon z      \\
 -\alpha z    +\rho x^2/z  &   -\beta z + \psi x
\end{array}
\right) .
$$
Choose for example $\beta = \psi = 0$ and $\alpha = \rho = \epsilon = 1$, giving the matrix whose determinant is
$$
{\rm det} \left(
\begin{array}{cc}
0  &     z      \\
x^2/z -z &   0
\end{array}
\right)  = z^2 - x^2 = (z+x) (z-x) .
$$
The eigenvalues of $\phi |_Y$ are therefore $\pm \sqrt{(z+x) (z-x)}$, generically distinct.
For a general choice of the surface $X$, our curve $Y = X\cap (y=0)$ will intersect the planes $x=z$ and $x=-z$
transversally, so the two eigenvalues of $\phi |_Y$ are permuted when going around points in the ramification locus
different from $p$. This
provides an explicit example of $\phi$ for which the spectral variety is irreducible, completing the proof of the lemma.
We included the detailed calculations because they look to be useful if one wants to write down explicitly
the spectral varieties.
\end{proof}

Turn now to the study of the boundary component $M(10,4)$ consisting of torsion-free sheaves in $\overline{M} (10)$ which
come from bundles in $M(4)$.
A point in $M(10,4)$ consists of a torsion-free sheaf $F$ in an exact sequence of the form \eqref{ffs}
$$
0\rightarrow F \rightarrow E \stackrel{\sigma }{\rightarrow} S\rightarrow 0
$$
where $E= F^{\ast\ast}$ is a point in $M(4)$, and $S$ is a length $6$ quotient.

The basic description of the space of obstructions as dual to the space of $K_X$-twisted endomorphisms still holds for torsion-free
sheaves. Thus, the obstruction space for $F$ is ${\rm Hom}^o(F,F(1))^{\ast}$. A co-obstruction
is a map $\phi : F\rightarrow F(1)=F\otimes K_X$ with ${\rm Tr}(\phi )=0$, which is a kind of {\em Higgs field}.
Since the moduli space is good, a point $F$ is in
${\rm Sing}(\overline{M} (10))$ if and only if the obstruction space is nonzero,
that is to say, if and only if there exists a nonzero trace-free $\phi : F\rightarrow F(1)$.

To give a map $\phi$ is the same thing as to give a map $\varphi :E \rightarrow E (1)$
compatible with the quotient map $E \rightarrow S$, in other words fitting into a commutative square with
$\sigma$, for an
induced map $\varphi _S:S\rightarrow S$. The maps $\varphi$, co-obstructions for $E$,
were studied in Lemma \ref{m4obs} above.

Let $\pp (E)\rightarrow X$ denote the Grothendieck projective space bundle. A point in $\pp (E)$
is a pair $(x,s)$ where $x\in X$ and $s: E_x\rightarrow S_x$ is a rank one quotient of the fiber.
Suppose given  a map $\varphi : E \rightarrow E(1)$. We can consider the
{\em internal spectral variety}
$$
{\rm Sp}_E(\varphi )\subset \pp (E)
$$
defined as the set of points $(x,s)\in \pp (E)$ such that there exists a commutative diagram
$$
\begin{array}{ccc}
E_x & \stackrel{\varphi (x)}{\longrightarrow} & E_x \\
\downarrow && \downarrow \\
S_x & \longrightarrow & S_x .
\end{array}
$$
The term `internal' signifies that it is a subvariety of $\pp (E)$ as opposed to the classical spectral
variety which is a subvariety of the total space of $K_X$. Here, we have only given
${\rm Sp}_E(\varphi )$ a structure of closed subset of $\pp (E)$, hence of reduced subvariety. It would
be interesting to give it an appropriate scheme structure which could be non-reduced in case $\varphi$
is nilpotent, but that will not be needed here.

\begin{corollary}
\label{forusual}
Suppose $E\in M(4)$ and $\varphi : E\rightarrow E(1)$ is a general co-obstruction. Then
the internal spectral variety ${\rm Sp}_E(\varphi )$ has a single irreducible component of
dimension $2$.  A quotient $E\rightarrow S$ consisting of a disjoint sum of rank one quotients
$s_i:E_{x_i}\rightarrow S_i$ with $S=\bigoplus S_i$ and the points $x_i$ disjoint, is compatible with $\varphi$
if and only if the points $(x_i,s_i)\in \pp (E)$ lie on the internal spectral variety ${\rm Sp}_E(\varphi )$.
\end{corollary}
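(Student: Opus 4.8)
The plan is to deduce the corollary directly from Lemma~\ref{m4obs} together with the definition of the internal spectral variety. First I would unwind the definitions: a point $(x,s)\in\pp(E)$ lies on ${\rm Sp}_E(\varphi)$ precisely when the rank-one quotient $s:E_x\to S_x$ is $\varphi(x)$-invariant, i.e.\ the kernel line $\ker(s)\subset E_x$ is an eigenline of $\varphi(x)$. Thus, over the open locus $U\subset X$ where $\varphi$ has distinct eigenvalues --- which is dense by Lemma~\ref{m4obs} --- the fibre of ${\rm Sp}_E(\varphi)\to X$ consists of exactly two points, the two eigenlines. This shows ${\rm Sp}_E(\varphi)$ is generically finite of degree $2$ over $X$, hence has pure dimension $2$ over $U$; the points lying over the complement of $U$ (the ramification/nilpotent locus) form a set of dimension $\le 1$, so they contribute no extra component. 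To see that there is a \emph{single} irreducible component of dimension $2$, I would invoke the irreducibility of the classical spectral variety asserted in Lemma~\ref{m4obs}: the internal spectral variety maps to the classical one in ${\rm Tot}(K_X)$ (sending an eigenline to its eigenvalue), this map is birational over $U$, so irreducibility of the classical spectral variety forces the closure of ${\rm Sp}_E(\varphi)\cap\pi^{-1}(U)$ to be irreducible, and any other components would have dimension $\le 1$.

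For the second assertion, the key point is the compatibility criterion. Given a quotient $E\to S$ that is a disjoint sum $S=\bigoplus S_i$ of rank-one quotients $s_i:E_{x_i}\to S_i$ at distinct points, a map $\varphi:E\to E(1)$ descends to an induced map $\varphi_S:S\to S$ fitting in the commutative square \emph{if and only if} $\varphi$ preserves the kernel subsheaf $F=\ker(E\to S)$, and since $S$ is supported at the finitely many distinct points $x_i$, this is a purely local condition at each $x_i$: at $x_i$ it says exactly that $\varphi(x_i)$ preserves $\ker(s_i)\subset E_{x_i}$, i.e.\ that $(x_i,s_i)\in{\rm Sp}_E(\varphi)$. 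Collecting these local conditions over all $i$ gives the stated equivalence. I would point out that the ``disjoint'' hypothesis is what makes the twist $E(1)$ harmless: near $x_i$ one may trivialise $K_X$, so ``$\varphi$ preserves $F$'' is literally the eigenline condition with no twisting subtlety.

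The main obstacle --- or rather the one place requiring a little care --- is the passage from ``generically two eigenlines'' to ``single irreducible component of dimension $2$''. One must rule out the possibility that ${\rm Sp}_E(\varphi)$ acquires a spurious $2$-dimensional component sitting entirely over a curve in $X$ (which a priori could happen if $\varphi$ were nilpotent along that curve, making the whole $\pp^1$-fibre invariant). Here is where generality of $\varphi$ is essential: by Lemma~\ref{m4obs} a general co-obstruction has generically distinct eigenvalues and an irreducible spectral variety in ${\rm Tot}(K_X)$, so in particular it is nowhere nilpotent on a dense open set, and the locus where both eigenvalues coincide is a proper closed subset, hence of dimension $\le 1$; over it the full fibre is at most a $\pp^1$, contributing dimension $\le 2$ but such a component, if it existed, would have to map onto that curve, and then the birational model over ${\rm Tot}(K_X)$ would be reducible, contradicting Lemma~\ref{m4obs}. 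This reduces everything already proved to a citation of the lemma, so the corollary follows.
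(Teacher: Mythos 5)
Your overall strategy matches the paper's: identify the fibre of ${\rm Sp}_E(\varphi )\rightarrow X$ over the locus of distinct eigenvalues as the two eigenlines, deduce a generically $2$-to-$1$ covering of $X$, and use the monodromy/irreducibility statement of Lemma \ref{m4obs} to get a unique $2$-dimensional component; the second assertion is indeed just an unwinding of the definition, exactly as in the paper. However, there is a genuine gap at the one point you yourself flag as delicate. A spurious $2$-dimensional component lying over a curve $C\subset X$ can occur only where the \emph{entire} fibre $\pp (E_x)$ lies in ${\rm Sp}_E(\varphi )$, and that happens exactly when $\varphi (x)=0$ --- not when $\varphi (x)$ is nilpotent but nonzero, which has a unique invariant line and hence contributes a single point of the fibre. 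So what must be excluded is that $\varphi$ vanishes along a divisor. Your proposed contradiction --- that such a component would force the spectral variety in ${\rm Tot}(K_X)$ to be reducible --- does not follow: the natural map from the internal to the external spectral variety collapses each full fibre $\pp (E_x)$ to the single point $(x,0)$, so the putative component would map onto a $1$-dimensional subset of the irreducible $2$-dimensional external spectral variety, which contradicts nothing.

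The paper closes this gap with a direct stability argument valid for \emph{every} nonzero trace-free $\varphi$, not only a general one: if $\varphi$ vanished along a divisor $D\in |\Oo _X(n)|$ with $n\geq 1$, it would factor through a map $E\rightarrow E(1-n)$; for $n\geq 2$ stability forces this to vanish, and for $n=1$ it must be a scalar endomorphism, which the trace-free condition forces to be zero, contradicting $\varphi \neq 0$. You need this (or an equivalent) argument to conclude that the zero locus of $\varphi$ is $0$-dimensional; once that is in place, the isolated full fibres contribute only pieces of dimension $\leq 1$ and your monodromy argument correctly finishes the proof.
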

\begin{proof}
Notice that $z\in X$ is a point such that $\varphi (z)=0$, then the whole fiber $\pp (E)_z\cong \pp ^1$ is in
${\rm Sp}_E(\varphi )$. In particular, if such a point exists then the map ${\rm Sp}_E(\varphi )\rightarrow X$
will not be finite.

A first remark is that the zero-set of $\varphi$ is $0$-dimensional. Indeed, if $\varphi$ vanished along a divisor
$D$, then $D\in |\Oo _X(n)|$ for $n\geq 1$ and $\varphi : F\rightarrow F(1-n)$. This is possible only if
$n=1$ and $\varphi : F\rightarrow F$ is a scalar endomorphism (since $F$ is stable). However, the trace of
the co-obstruction vanishes, so the scalar $\varphi$ would have to be zero, which we are assuming is not the case.

At an isolated point $z$ with $\varphi (z)=0$, the fiber of the projection
${\rm Sp}_E(\varphi )\rightarrow X$ contains the whole $\pp (E_z)=\pp ^1$. However, these can contribute at most
irreducible components of dimension $\leq 1$ (although we conjecture that in fact these fibers are contained  in
the closure of the $2$-dimensional component so that ${\rm Sp}_E(\varphi )$ is irreducible).

Away from such fibers, the internal spectral variety is isomorphic to the external one, a two-sheeted covering of $X$,
and by Lemma \ref{m4obs}, for a general $\varphi$ the monodromy of this covering interchanges the sheets so it is irreducible.
Thus, ${\rm Sp}_E(\varphi )$ has a single irreducible component of dimension $2$, and it maps to $X$ by a generically finite ($2$ to $1$) map.

The second statement, that a quotient consisting of a direct sum of rank one quotients, is compatible with
$\varphi$ if and only if the corresponding points lie on ${\rm Sp}_E(\varphi )$, is immediate from the
definition.
\end{proof}

\begin{definition}
\label{defusual}
A triple $(E,\varphi , \sigma )$ where $E\in M(4)$, $\varphi : E\rightarrow E(1)$ is a non-nilpotent map, and $\sigma = \bigoplus s_x$
is a quotient composed of six rank $1$ quotients over distinct points, compatible with $\varphi$ as in the previous Corollary \ref{forusual},
leads to an obstructed point $F=F_{(E,\varphi , \sigma )}\in M(10,4)^{\rm sing}$ obtained by setting $F:= \ker (\sigma )$.
Such a point will be called {\em usual}.
\end{definition}

Ellingsrud and Lehn have given a very nice description of the Grothendieck quotient scheme of a bundle of rank $r$ on
a smooth surface.
It extends the basic idea of Li's theorem which we already stated as Theorem \ref{quotli}
above, and will allow us to count dimensions of strata in $M(10,4)$.

\begin{theorem}[Ellingsrud-Lehn]
\label{el}
The quotient scheme parametrizing quotients
of a locally free sheaf $\Oo _X^r$ of rank $r$ on a smooth surface $X$,  located at a given point
$x\in X$, and of length $\ell$, is irreducible of dimension $r\ell -1$.
\end{theorem}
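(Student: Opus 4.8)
The plan is to pass to a formal-local model and to argue by induction on the length $\ell$ by means of a Hecke-type incidence correspondence between the punctual Quot schemes of lengths $\ell$ and $\ell-1$. All of this is local near $x$, so write $A:=\widehat{\Oo}_{X,x}\cong\cc[[u,v]]$ and $Q_\ell^r:=\mathrm{Quot}^\ell_x(\Oo_X^{\oplus r})$, the projective scheme of colength-$\ell$ quotients of $A^{\oplus r}$ supported at the maximal ideal; it depends only on this formal neighbourhood. The case $\ell=1$ is trivial, $Q_1^r\cong\pp^{r-1}$. Assume inductively that $Q_{\ell-1}^r$ is irreducible of dimension $r(\ell-1)-1$.

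Consider
$$
\widetilde Q:=\bigl\{\,([A^{\oplus r}\twoheadrightarrow T],\,[A^{\oplus r}\twoheadrightarrow T'])\ :\ T\twoheadrightarrow T'\ \text{compatibly, of colength }1\,\bigr\}\subset Q_\ell^r\times Q_{\ell-1}^r,
$$
with the two projections $\pi\colon\widetilde Q\to Q_\ell^r$ and $\pi'\colon\widetilde Q\to Q_{\ell-1}^r$. The fibre of $\pi$ over $[A^{\oplus r}\twoheadrightarrow T]$ is the space of colength-$1$ quotients of $T$, namely $\pp((T\otimes k(x))^\vee)$; it is never empty, so $\pi$ is surjective, and it is an isomorphism over the locus where $T$ is a cyclic $A$-module. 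The fibre of $\pi'$ over $[A^{\oplus r}\twoheadrightarrow T']$, with kernel $K'$, is $\pp((K'\otimes k(x))^\vee)$; tensoring $0\to K'\to A^{\oplus r}\to T'\to0$ with $k(x)$ and using that a nonzero finite-length $A$-module has projective dimension exactly $2$ gives $\dim_k(K'\otimes k(x))=r+\beta_2(T')$, where $\beta_2(T')=\dim_k\mathrm{Tor}^A_2(k(x),T')\geq1$, with equality precisely when $T'$ is Gorenstein. The locus $G\subset Q_{\ell-1}^r$ of quotients with $T'$ a curvilinear cyclic module $\cong\cc[t]/(t^{\ell-1})$ (hence Gorenstein) is irreducible of dimension $r(\ell-1)-1$: it fibres over the curvilinear locus of $\mathrm{Hilb}^{\ell-1}_x$ (irreducible of dimension $\ell-2$) with fibre the irreducible variety $\mathrm{Surj}_A(A^{\oplus r},\cc[t]/(t^{\ell-1}))/\mathrm{Aut}_A(\cc[t]/(t^{\ell-1}))$ of dimension $(r-1)(\ell-1)$. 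Thus $G$ is dense in the irreducible $Q_{\ell-1}^r$, and over a dense open $V\subset Q_{\ell-1}^r$ contained in the Gorenstein locus the map $\pi'$ is a $\pp^r$-bundle. Hence $\pi'^{-1}(V)$ is irreducible of dimension $r(\ell-1)-1+r=r\ell-1$; put $\widetilde Q^{\circ}:=\overline{\pi'^{-1}(V)}$.

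The proof is then reduced to the single claim that $\pi$ maps $\widetilde Q^{\circ}$ onto all of $Q_\ell^r$. Granting this, $Q_\ell^r=\pi(\widetilde Q^{\circ})$ is the image of an irreducible projective variety, hence irreducible; and since $\pi|_{\widetilde Q^{\circ}}$ is an isomorphism over the nonempty cyclic locus, into which a generic point of $\widetilde Q^{\circ}$ maps, it is birational, so $\dim Q_\ell^r=\dim\widetilde Q^{\circ}=r\ell-1$, closing the induction. To prove the claim one must control, inside the (a priori reducible) $\widetilde Q$, the components that do not meet $\pi'^{-1}(V)$: by the fibre formula for $\pi'$ such a component lying over $\{T':\beta_2(T')\geq j\}$ has dimension at most $\dim\{\beta_2\geq j\}+(r+j-1)$, so it suffices to bound the codimension of the Betti-number jumping loci $\{\beta_2\geq j\}$ in $Q_{\ell-1}^r$ — via the structure of minimal free resolutions over $A$ (Hilbert–Burch) together with the fibration $Q_{\ell-1}^r\to\mathrm{Hilb}^{\ell-1}_x$ — and then to check that the quotients parametrized by this ``bad'' part of $\widetilde Q$ are nonetheless limits of quotients coming from $\pi'^{-1}(V)$. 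This dimension bookkeeping for the non-generic strata — equivalently, the assertion that the generic length-$\ell$ quotient of $A^{\oplus r}$ is curvilinear — is the heart of the matter and the step I expect to require real work; the formal reduction, the computation of the two fibre dimensions, and the identification of the distinguished component $\widetilde Q^{\circ}$ are straightforward. (For $r=1$ the statement is Briançon's theorem on the punctual Hilbert scheme of a smooth surface, which may alternatively be taken as the base case of an induction on $r$.)
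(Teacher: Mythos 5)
The paper does not actually prove Theorem \ref{el}: its proof environment consists of the citation ``See \cite{EllingsrudLehn}'', so your attempt has to be judged as a proof from scratch rather than against an argument in the text.

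Your formal-local reduction, the two fibre computations for the flag correspondence $\widetilde Q$ (in particular $\dim_k(K'\otimes k)=r+\beta_2(T')$ with equality $\beta_2=1$ on the Gorenstein locus), and the count $\dim G=(\ell-2)+(r-1)(\ell-1)=r(\ell-1)-1$ are all correct, and they do produce one distinguished irreducible closed subset $\overline{\pi(\widetilde Q^{\circ})}\subset Q^r_\ell$ of dimension $r\ell-1$. But the theorem is the assertion that this subset is \emph{all} of $Q^r_\ell$, and that is exactly the step you leave open. The route you sketch for it does not close the gap: even a sharp bound on the Betti-jumping loci showing $\dim(\widetilde Q\setminus\widetilde Q^{\circ})<r\ell-1$ would not prevent $\pi(\widetilde Q\setminus\widetilde Q^{\circ})$ from containing an irreducible component of $Q^r_\ell$ of smaller dimension disjoint from $\overline{\pi(\widetilde Q^{\circ})}$ --- since $\pi$ is surjective, this is the only way irreducibility can fail, and ruling it out needs either a lower bound of $r\ell-1$ on the dimension of \emph{every} component of the punctual Quot scheme, or a direct degeneration argument showing every quotient is a limit of curvilinear ones. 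Neither is free. The obstruction-theoretic bound gives $r\ell$ for the global Quot scheme of the surface, and restricting to the fibre of the support map over $\ell[x]$ loses $2\ell$, not $1$, so the needed lower bound does not follow from general principles; and ``every quotient is a limit of curvilinear ones'' is the theorem restated. This is precisely where the substance of \cite{EllingsrudLehn} lies: their argument reduces to Brian\c{c}on's theorem for $r=1$ and consists of careful stratum-by-stratum dimension estimates guaranteeing that every component meets the good locus. As written, your proposal establishes the existence and irreducibility of a component of the expected dimension, not the irreducibility of $Q^r_\ell$.
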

\begin{proof}
See \cite{EllingsrudLehn}. We have given the local version of the statement here.
\end{proof}

In  our case, $r=2$ so the dimension of the local quotient scheme is $2\ell -1$.

A given quotient $E\rightarrow S$ decomposes as a direct sum of quotients $E\rightarrow S_i$ located
at distinct points $x_i\in X$. Order these by decreasing length, and define the {\em length vector}
of $S$ to be the sequence $(\ell _1,\ldots , \ell _k)$ of lengths $\ell _i=\ell (S_i)$ with $\ell _i \geq \ell _{i+1}$.
This leads to a stratification of the ${\rm Quot}$ scheme into strata labelled by length vectors.
By Ellingsrud-Lehn, the dimension of the space of quotients supported at a single (but not fixed) point $x_i$
and having length $\ell _i$, is $2\ell _i + 1$, giving the following dimension count.

\begin{corollary}
For a fixed bundle $E$ of rank $2$,
the dimension of the stratum associated to length vector $(\ell _1,\ldots , \ell _k)$
in the ${\rm Quot}$-scheme of quotients $E\rightarrow S$ with total length $\ell = \sum _{i=1}^k \ell _i$,
is
$$
\sum (2\ell _i + 1) = 2\ell + k.
$$
\end{corollary}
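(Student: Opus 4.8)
The plan is to present the stratum as a fibration over a configuration space of $k$ distinct points of $X$, with fiber a product of \emph{local} ${\rm Quot}$-schemes, and then to compute its dimension using Theorem \ref{el}. Recall that any quotient $\sigma : E\rightarrow S$ has a canonical primary decomposition $S=\bigoplus _{i=1}^k S_i$ with $S_i$ supported at a point $x_i$, the $x_i$ pairwise distinct, and the length vector of $S$ is the decreasing rearrangement of $(\ell (S_1),\ldots ,\ell (S_k))$. Sending $\sigma$ to the unordered set $\{ x_1,\ldots ,x_k\}$ defines a morphism from the stratum to the configuration space $X^{(k)}-{\rm diag}$ of $k$ distinct unordered points, which has dimension $2k$.

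First I would analyze the fibers. Over a tuple of distinct points $(x_1,\ldots ,x_k)$, equipped with a chosen ordering and a matching of the multiset $\{ \ell _1,\ldots ,\ell _k\}$ to the points, the fiber is the product $\prod _{i=1}^k {\rm Quot}(E;\ell _i)_{x_i}$, where ${\rm Quot}(E;\ell _i)_{x_i}$ denotes the scheme of length $\ell _i$ quotients of $E$ supported at the single point $x_i$. Since $E$ is locally free of rank two, trivializing $E\cong \Oo _X^2$ near $x_i$ identifies ${\rm Quot}(E;\ell _i)_{x_i}$ with the local quotient scheme of $\Oo _X^2$ at a point, of length $\ell _i$, which by Theorem \ref{el} with $r=2$ is irreducible of dimension $2\ell _i-1$. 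Hence each such fiber has dimension $\sum _{i=1}^k (2\ell _i-1)$, and exactly as in the discussion of the fibration $M(c_2,c_2-d)\rightarrow M(c_2-d)$ in Section \ref{strata}---trivializing the universal $E$ near the varying support points---this is a fibration in a strong enough sense that the dimensions add:
$$
{\rm dim} = 2k + \sum _{i=1}^k (2\ell _i-1) = \sum _{i=1}^k (2\ell _i+1) = 2\ell + k .
$$
(One also sees irreducibility when all the $\ell _i$ are equal: the stratum is then the quotient by $\Sigma _k$ of an ordered-configuration bundle with irreducible fibers, and in general it is a product over the blocks of constant length, each of which is irreducible.)

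The main obstacle is purely the bookkeeping of this fibration: making the support map a genuine morphism, trivializing $E$ near the moving points $x_i$ to split off the local ${\rm Quot}$-factors, and observing that when some of the $\ell _i$ coincide the natural parameter space is a finite quotient by the symmetric group permuting the equal parts---a finite quotient does not change dimensions. None of this is deep once Theorem \ref{el} is available; the entire content of the count is the Ellingsrud--Lehn formula $r\ell -1$ applied with $r=2$, together with the two extra dimensions of freedom contributed by the position of each support point, which is what produces the summand $2\ell _i+1$ for each part.
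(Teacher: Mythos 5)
Your proposal is correct and follows essentially the same route as the paper, which justifies the corollary in one line by noting that Ellingsrud--Lehn gives dimension $2\ell_i-1$ for the local punctual ${\rm Quot}$-scheme and that letting each support point move on the surface adds $2$, yielding $2\ell_i+1$ per point; your regrouping (configuration space of dimension $2k$ times a product of fixed-point local ${\rm Quot}$-schemes) is the same count. The extra care you take with the support morphism, the local trivializations, and the finite symmetric-group quotient when lengths coincide is sound but beyond what the paper records.
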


Recall that the moduli space $M(4)$ has dimension $2$, so the dimension of the stratum of $M(10,4)$
corresponding to a vector $(\ell _1,\ldots , \ell _k)$ is $14+k$.
In particular,
$M(10,4)$ has a single stratum $(1,1,1,1,1,1)$ of dimension $20$, corresponding to quotients which are
direct sums of rank one quotients supported at distinct points, and a single stratum $(2,1,1,1,1)$
of length $19$. This yields the following corollary.

\begin{corollary}
\label{intstrata}
If $Z'\subset M(10,4)$ is any
$19$-dimensional irreducible subvariety, then either $Z'$ is equal to the stratum
$(2,1,1,1,1)$,
or else the general point on $Z'$ consists of a direct sum of six rank $1$ quotients
supported over six distinct points of $X$.
\end{corollary}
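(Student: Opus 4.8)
\emph{Proof proposal.} The plan is to read the statement off from the dimension count established just above. Recall that the strata of $M(10,4)$ are labelled by the length vectors $(\ell_1,\ldots,\ell_k)$ with $\sum_{i=1}^k \ell_i = 6$, and that the stratum attached to such a vector has dimension $14+k$. First I would run through the partitions of $6$: the partition $(1,1,1,1,1,1)$ has $k=6$, giving the open stratum of dimension $20$; the partition $(2,1,1,1,1)$ has $k=5$, giving a stratum of dimension $19$; every remaining partition of $6$ has at most four parts, so the corresponding stratum has dimension at most $18$.

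Next, let $\eta$ be the generic point of $Z'$. Since $M(10,4)$ is the disjoint union of its locally closed strata, $\eta$ lies in exactly one of them, say the one with length vector $(\ell_1,\ldots,\ell_k)$. Then $Z'$ is contained in the closure of that stratum, so $19=\dim Z'\le 14+k$, which forces $k\ge 5$; by the first step the only possibilities are $(1,1,1,1,1,1)$ and $(2,1,1,1,1)$.

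If $\eta$ lies in the stratum $(1,1,1,1,1,1)$, then by the definition of that stratum the quotient $S$ at $\eta$, hence the quotient at every point of a dense open subset of $Z'$, is a direct sum of six rank one quotients supported at six distinct points of $X$; this is the second alternative. If instead $\eta$ lies in the stratum $(2,1,1,1,1)$, that stratum has dimension exactly $19$ and is irreducible: it fibers over $M(4)\cong X$ (irreducible) with fiber the $(2,1,1,1,1)$-stratum of ${\rm Quot}(E,6)$, and the latter is itself irreducible, fibering over an open subset of a product of symmetric powers of $X$ with fibers products of local Quot schemes, which are irreducible by the Ellingsrud--Lehn Theorem \ref{el}. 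An irreducible subvariety of the closure of an irreducible variety of the same dimension equals that closure, so $Z'$ is the closure of the stratum $(2,1,1,1,1)$, the first alternative. The whole argument is bookkeeping on top of the dimension formula $14+k$; the only point requiring a word of justification is the irreducibility of the $(2,1,1,1,1)$-stratum, which follows from the fibration picture of Section \ref{strata} together with Theorem \ref{el}, and I do not expect any real obstacle.
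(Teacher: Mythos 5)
Your proposal is correct and follows essentially the same route as the paper, which presents Corollary \ref{intstrata} as an immediate consequence of the dimension formula $14+k$ for the stratum with length vector $(\ell_1,\ldots,\ell_k)$: only the partitions with $k\geq 5$ parts can carry a $19$-dimensional subvariety. Your extra paragraph justifying the irreducibility of the $(2,1,1,1,1)$ stratum via the fibration over $M(4)$ and Theorem \ref{el} is a reasonable way to make precise the paper's phrase ``$Z'$ is equal to the stratum $(2,1,1,1,1)$,'' and introduces nothing beyond what the paper's setup already provides.
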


\begin{proposition}
\label{usual}
The singular locus $M(10,4)^{\rm sing}$ has only one irreducible component of dimension $19$. This irreducible component
has a nonempty dense open subset consisting of the usual points (Definition \ref{defusual}).
For a usual point, the co-obstruction $\varphi$
is unique up to a scalar, so this open set may be viewed as the moduli space of usual triples $(E,\varphi , \sigma )$,
which is irreducible.
\end{proposition}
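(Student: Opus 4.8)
The plan is to produce the asserted component explicitly as the closure of the locus of usual points, and then to use Ellingsrud--Lehn (Theorem \ref{el}) together with Corollary \ref{intstrata} to bound the dimension of everything else in $M(10,4)^{\rm sing}$.

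\emph{Construction of the component.} First I would organize the usual triples into an irreducible $19$-dimensional variety. By Theorem \ref{descrip45} the base $M(4)\cong X$ is irreducible of dimension $2$ and smooth, so ${\rm Ext}^1(E,E)$ has dimension $2$ and hence, by the $\chi$-calculation, ${\rm Hom}^o(E,E(1))$ has dimension $6$ for every $E\in M(4)$ (compatibly with Table \ref{leq9table}); these spaces form a rank-$6$ vector bundle $\Vv$ on $M(4)$. Its projectivization $\pp(\Vv)\to M(4)$ is an irreducible $\pp^5$-bundle of dimension $7$; over the dense open locus where $\varphi$ is general in the sense of Lemma \ref{m4obs}, form the universal internal spectral variety ${\rm Sp}\to\pp(\Vv)$, whose fibers are the irreducible surfaces ${\rm Sp}_E(\varphi)$ of Corollary \ref{forusual}, so ${\rm Sp}$ is irreducible of dimension $9$. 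The relative symmetric product ${\rm Sym}^6({\rm Sp}/\pp(\Vv))$, restricted to the open set of six points with pairwise distinct images in $X$, is then irreducible of dimension $7+12=19$; by Corollary \ref{forusual} it is exactly the moduli of usual triples $(E,\varphi,\sigma)$ (with $\varphi$ taken up to scalar). The map $(E,\varphi,\sigma)\mapsto F:=\ker\sigma$ has image inside $M(10,4)^{\rm sing}$, because $\varphi$ restricts to a nonzero trace-free $\phi:F\to F(1)$ and $\overline{M}(10)$ is good. It is generically injective: $F$ recovers $E=F^{\ast\ast}$ and the quotient $\sigma$, and $[\varphi]$ is recovered because a second independent compatible co-obstruction would force the six points of $\pp(E)$ onto the $\le 1$-dimensional base locus of a pencil of spectral surfaces in $\pp(E)$, which is impossible for a general configuration (hence on a dense open of the moduli of triples). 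Therefore $U^{\rm us}\subset M(10,4)^{\rm sing}$ is irreducible of dimension $19$, and its closure $W$ is an irreducible component: a general six-tuple of points of $\pp(E)$ imposes six independent conditions on $[\varphi]\in\pp^5$, so a general sheaf in $M(10,4)$ is a smooth point of $\overline{M}(10)$ and $\dim M(10,4)^{\rm sing}\le 19$.

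\emph{Uniqueness of the component.} Let $Z'$ be a $19$-dimensional irreducible component of $M(10,4)^{\rm sing}$. By Corollary \ref{intstrata}, either $Z'$ is the stratum with length vector $(2,1,1,1,1)$, or a general point of $Z'$ is $\ker\big(\bigoplus_{i=1}^6 s_{x_i}\big)$ with the six points distinct. In the first case I would bound the locus of triples $(E,\varphi,\sigma)$ with $\sigma$ of length vector $(2,1,1,1,1)$ and $\varphi$ trace-free compatible with $\sigma$, whose image is precisely the set of singular sheaves of this stratum: for general, hence generically regular semisimple, $\varphi$, the $\varphi$-compatible length-$2$ quotients at a point are either the fat quotient $E\to E_x$ (no extra moduli beyond $x\in X$) or supported on a curvilinear subscheme $(x,v)\in\pp(T_X)$ with only finitely many choices once $(x,v)$ is fixed; together with the four remaining rank-$1$ quotients lying on the surface ${\rm Sp}_E(\varphi)$ this gives at most $2+5+3+8=18<19$, and degenerate $\varphi$ contribute even less (see below). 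Hence the stratum $(2,1,1,1,1)$ is not contained in $M(10,4)^{\rm sing}$. In the second case, if a general point $F$ of $Z'$ admits a compatible co-obstruction of the general type of Lemma \ref{m4obs}, then $F$ is a usual point (Definition \ref{defusual}), so $Z'\subset W$ and, both being irreducible of dimension $19$, $Z'=W$.

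\emph{Degenerate co-obstructions.} The remaining and, I expect, main difficulty is to exclude the possibility that, for a general point $F$ of some $19$-dimensional $Z'$, every compatible trace-free $\varphi$ is degenerate --- nilpotent, or non-nilpotent with reducible or non-reduced internal spectral variety. Here I would note that by Lemma \ref{m4obs} the degenerate co-obstructions form a proper closed subset of each $\pp^5={\rm obs}(E)$, hence a family of dimension $\le 4$ over $M(4)$, i.e.\ of dimension $\le 6$ in total; and that for any nonzero trace-free $\varphi$ the zero locus is $0$-dimensional (a divisorial zero would make $\varphi$ a scalar, hence $0$), so ${\rm Sp}_E(\varphi)$ remains at most $2$-dimensional --- a two-sheeted cover of $X$ away from finitely many fibers. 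Six distinct points on such a spectral variety contribute at most $12$, so the total is at most $6+12=18<19$, and no $19$-dimensional component of $M(10,4)^{\rm sing}$ can have a general point of this degenerate kind. Combining the three cases, every $19$-dimensional component of $M(10,4)^{\rm sing}$ equals $W$; its dense open locus of usual points, together with the generic uniqueness of $\varphi$ established above, identifies this open set with a dense open of the moduli space of usual triples, which is irreducible by the first part. This completes the proof.
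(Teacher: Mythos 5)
Your argument is essentially the paper's own proof: the same reduction via Corollary \ref{intstrata} to the two stratum types, the same $7+12=19$ count over the $\pp^5$-bundle of co-obstructions using Corollary \ref{forusual}, the same $\leq 6+12=18$ bound disposing of degenerate $\varphi$, the same elimination of the $(2,1,1,1,1)$ stratum by showing compatibility with a general such quotient kills $\varphi$, and the same uniqueness of $[\varphi]$ from general points imposing independent conditions (the paper phrases this via five general points rather than your pencil/base-locus argument). The only loose end is that your ``degenerate $\varphi$ contribute even less (see below)'' deferral for the $(2,1,1,1,1)$ case is not literally covered by the $6+12$ count given below (naively one gets $6+5+8=19$); but your own observation that for any nonzero $\varphi$ the compatible length-two quotients form a family of dimension at most $3$ closes this immediately, giving $6+3+8=17$.
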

\begin{proof}
Suppose $Z'\subset M(10,4)^{\rm sing}$ is an irreducible component. Consider the two cases given by
Corollary \ref{intstrata}.

\noindent
{\bf (i)}---If $Z'$ contains an open set consisting of points which are direct sums of
six rank $1$ quotients supported on distinct points of $X$, then this open set parametrizes usual triples.
Furthermore, a point in this open set corresponds to a choice of $(E,\varphi )$ together with six points
on the internal spectral variety ${\rm Sp}_E(\varphi )$. We count the dimension of this piece as follows.

Let $M'(4)$ denote the moduli space of pairs $(E,\varphi )$ with $E\in M(4)$ and $\varphi$ a nonzero co-obstruction for $E$.
The space of co-obstructions for any $E\in M(4)$, has dimension $6$ and the family of these spaces forms a
vector bundle over $M(4)$ (more precisely, a twisted vector bundle twisted by the obstruction class for
existence of a universal family over $M(4)$). Thus, the moduli space of pairs
has a fibration $M'(4)\rightarrow M(4)$ whose fibers are $\pp ^{5}$. In particular, $M'(4)$ is a smooth
irreducible variety of dimension $7$.

For a general such $(E,\varphi )$ the moduli space of usual triples has dimension $\leq 12$, with a unique $12$ dimensional
piece corresponding to a general choice of $6$ points on the unique $2$-dimensional irreducible component of
${\rm Sp}_E(\varphi )$. This gives the $19$-dimensional component of $M(10,4)^{\rm sing}$ mentionned in the statement
of the proposition.

Suppose $(E, \varphi )$ is not general, that is to say, contained in some subvariety of $M'(4)$ of dimension $\leq 6$.
Then, as $\varphi$ is nonzero, even though we no longer can say that it is irreducible,
in any case the internal spectral variety ${\rm Sp}_E(\varphi )$ has dimension $2$
so the space of choices of $6$ general points on it has dimension $\leq 12$, and this contributes at most subvarieties of
dimension $\leq 18$ in $M(10,4)^{\rm sing}$. This shows that in the first case (i) of Corollary \ref{intstrata},
we obtain the conclusion of the proposition.

\noindent
{\bf (ii)}---Suppose $Z'$ is equal to the stratum of $M(10,4)$ corresponding to length vector $(2,1,1,1,1)$.
In this case, we show that a general point of $Z'$ has no non-zero co-obstructions, contradicting the
hypothesis that $Z'\subset M(10,4)^{\rm sing}$ and showing that this case cannot occur.

Fix $E\in M(4)$.
The space of co-obstructions of $E$ has dimension $6$. Suppose $E\rightarrow S_1$ is a quotient of
length $2$. If it is just the whole fiber of $E$ over $x_1$, then it is automatically compatible with
any co-obstruction. However, these quotients contribute only a $2$-dimensional subspace of the
space of such quotients which has dimension $5$ by Ellingsrud-Lehn. Thus, these points don't contribute
general points. On the other hand, a general quotient of length $2$ corresponds to an infinitesimal tangent
vector in $\pp (E)$, and the condition that this vector be contained in ${\rm Sp}_E(\varphi )$ imposes
two conditions on $\varphi$. Therefore, the space of co-obstructions compatible with $S_1$ has
dimension $\leq 4$. Next, given a nonzero co-obstruction in that subspace, a general quotient $E\rightarrow S_2$ of length $1$
will not be compatible, so imposing compatibility with $S_1$ and $S_2$ leads to a space of co-obstructions of dimension $\leq 3$.
Continuing in this way, we see that imposing the condition of compatibility of $\varphi$ with a general quotient $S=S_1\oplus \cdots \oplus S_5$
in the stratum $(2,1,1,1,1)$ leads to $\varphi =0$. Thus, a general point of this stratum has no non-zero co-obstructions as we have
claimed, and
this case (ii) cannot occur.

Hence, the only case from Corollary \ref{intstrata} which can contribute a $19$-dimensional stratum, contributes the single
irreducible component described in the statement of the proposition. One may note that $\varphi$ is uniquely determined
for a general set of six points on its internal spectral variety, since the first $5$ points are general
in $\pp (E)$ and impose linearly independent conditions.
\end{proof}

\begin{corollary}
\label{meetsingood}
Suppose $M(10,4)\cap \overline{M(10)}$ is nonempty. Then it is the unique $19$-dimensional irreducible
component of usual triples in $M(10,4)^{\rm sing}$ identified by Proposition \ref{usual}.
\end{corollary}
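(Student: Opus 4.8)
The plan is to recognize $M(10,4)\cap\overline{M(10)}$ as an intersection of distinct irreducible components of $\overline{M}(10)$, use Hartshorne's connectedness theorem to force it to be pure of dimension $19$, and then identify it by appealing to the classification of the $19$-dimensional pieces of $M(10,4)^{\rm sing}$ in Proposition \ref{usual}.

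First I would observe that $M(10,4)$ is closed and is an irreducible component of $\overline{M}(10)$, distinct from every irreducible component $\overline{Z}$ of $\overline{M(10)}$: both have dimension $20=\dim\overline{M}(10)$, but the generic sheaf parametrized by $M(10,4)$ is not locally free whereas $\overline{Z}$ contains the open set $Z\subset M(10)$ of bundles (this separation was already recorded in Section \ref{strata}). Consequently, if $W$ is an irreducible component of $M(10,4)\cap\overline{M(10)}$, its generic point lies on $M(10,4)\cap\overline{Z}$ for some component $\overline{Z}$ of $\overline{M(10)}$, so $W$ is also an irreducible component of $M(10,4)\cap\overline{Z}$. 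Since $\overline{M}(10)$ is good, it is a local complete intersection by Lemma \ref{lci}, and Hartshorne's theorem in the form of Corollary \ref{meetsing} applies: $M(10,4)\cap\overline{Z}$ is pure of codimension $1$, so $\dim W=19$. Since $W$ lies in the intersection of two distinct components it is contained in ${\rm Sing}(\overline{M}(10))$, hence $W\subseteq M(10,4)^{\rm sing}$.

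Next I would invoke Proposition \ref{usual}: $M(10,4)^{\rm sing}$ has dimension at most $19$ — the top stratum $(1,1,1,1,1,1)$ of $M(10,4)$ is generically smooth in $\overline{M}(10)$ — and has exactly one $19$-dimensional irreducible component, the closure $C$ of the locus of usual triples. Any $19$-dimensional irreducible closed subset of $M(10,4)^{\rm sing}$ must therefore be contained in $C$, and by equality of dimensions it equals $C$. Applying this to each component $W$ shows that every irreducible component of $M(10,4)\cap\overline{M(10)}$ is $C$; since the intersection is assumed nonempty, $M(10,4)\cap\overline{M(10)}=C$.

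The dimension bookkeeping is routine. The point that needs care is the applicability of Hartshorne's theorem, which relies on $\overline{M}(10)$ being good — hence a local complete intersection — and on $M(10,4)$ being genuinely a separate irreducible component from the closures of the components of $M(10)$; both facts are in hand from Section \ref{strata} and Proposition \ref{singularlocus}. The substantive input, the structure of $M(10,4)^{\rm sing}$, is exactly Proposition \ref{usual}, which is already available, so no further obstacle remains.
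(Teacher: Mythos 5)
Your proof is correct and follows essentially the same route as the paper: Hartshorne's connectedness theorem (via goodness and the local complete intersection property) forces the intersection to be pure of dimension $19$, the intersection of distinct components lies in the singular locus, and Proposition \ref{usual} identifies the unique $19$-dimensional component of $M(10,4)^{\rm sing}$. You are somewhat more explicit than the paper in decomposing $\overline{M(10)}$ into its irreducible components before applying Hartshorne and in noting that $\dim M(10,4)^{\rm sing}\leq 19$, but these are just careful elaborations of the same argument.
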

\begin{proof}
By Hartshorne's theorem, the intersection
$M(10,4) \cap \overline{M(10)}$ has pure dimension $19$ if it is nonempty. This could also be seen from O'Grady's lemma
that the boundary of $\overline{M(10)}$ has pure dimension $19$. However, any point in this intersection is singular.
By Proposition \ref{usual}, the singular locus $M(10,4)^{\rm sing}$ has only one irreducible component of dimension $19$,
and it is the closure of the space of usual triples.
\end{proof}

If the intersection $M(10,4)\cap \overline{M(10)}$ is nonempty, the torsion-free sheaves $F$ parametrized
by general points satisfy $h^1(F(1))=0$. We show this by a dimension estimate using Ellingsrud-Lehn.
The more precise information about $M(10,4)^{\rm sing}$ given in Proposition \ref{usual}, while not really
needed for the proof at $c_2=10$, will be useful in treating the case of $c_2=11$ in Section \ref{sec11}.

\begin{proposition}
\label{codim2}
The subspace of $M(10,4)$ consisting of points $F$ such that $h^1(F(1))\geq 1$, has codimension $\geq 2$.
\end{proposition}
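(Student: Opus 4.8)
The plan is to bound the dimension of the locus where $h^1(F(1))\geq 1$ by relating it, via the double-dual sequence, to a locus of compatible Higgs-type data on $M(4)$, and then count dimensions using Corollary \ref{forward}-style fibration arguments together with Ellingsrud-Lehn. Recall from Lemma \ref{m4start} that for $E=F^{\ast\ast}\in M(4)$ we have $h^1(E(1))=0$ and $h^2(E(1))=0$, with $h^0(E(1))=11$. Given the exact sequence $0\to F\to E\to S\to 0$ with $S$ of length $6$, twisting by $\Oo_X(1)$ and taking the long exact sequence, $h^1(F(1))$ is the cokernel of the evaluation map $H^0(E(1))\to H^0(S(1))=H^0(S)$ (a vector space of dimension $6$), since $H^1(E(1))=0$ and $H^0(E(1))=11\geq 6$. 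So $h^1(F(1))\geq 1$ exactly when the $11$-dimensional space $H^0(E(1))$ fails to surject onto the $6$-dimensional space $H^0(S)$, i.e. when the six rank-one quotients at the six points impose dependent conditions on sections of $E(1)$.

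First I would stratify $M(10,4)$ by the length vector of $S$ as in Corollary \ref{intstrata}: all strata other than $(1,\dots,1)$ have dimension $\leq 19$, so it suffices to prove that inside the open stratum $(1,1,1,1,1,1)$ (dimension $20$), the locus with $h^1(F(1))\geq 1$ has dimension $\leq 18$. On this stratum a point is a choice of $E\in M(4)$ (dimension $2$) together with six distinct points $x_1,\dots,x_6\in X$ and rank-one quotients $s_i:E_{x_i}\to S_i$, i.e. six points $(x_i,s_i)\in \pp(E)$ — a total of $12$ more dimensions, matching $20$. For $h^1(F(1))\geq 1$ we need the six evaluation functionals on $H^0(E(1))$ (each $(x_i,s_i)$ gives a linear functional $H^0(E(1))\to S_i(1)\cong \cc$, well-defined up to scalar) to be linearly dependent. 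Generically six functionals on an $11$-dimensional space are independent; the condition of dependence is one codimension-$1$ condition on the configuration. The key point is that this condition genuinely cuts down dimension by (at least) two when we also account for the fact that dependence among six of the functionals forces a nontrivial relation, which in turn (by a secant-variety / incidence count) is a codimension $\geq 2$ condition. Concretely: consider the evaluation map $\mathrm{ev}:\pp(E)\to \pp(H^0(E(1))^\ast)=\pp^{10}$ sending $(x,s)$ to its functional; since $H^0(E(1))$ generates $E(1)$ and separates points and tangents off a small locus (again by Lemma \ref{m4start} and stability, as in Theorem \ref{descrip45}), $\mathrm{ev}$ is generically an embedding of the $3$-fold $\pp(E)$. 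Six points of $\pp^{10}$ are dependent iff some subset spans a projective subspace of dimension less than expected, and for a nondegenerate $3$-fold the locus of $k$-tuples lying in a $\pp^{k-2}$ has codimension $\geq 2$ in $(\pp(E))^k$ for each $k\leq 6$ — this is the standard general-position estimate.

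I would carry this out as follows: (1) write $h^1(F(1))=\mathrm{coker}(H^0(E(1))\to H^0(S))$ and reduce to a statement about configurations of points on $\pp(E)$ relative to the linear system $|\Oo_{\pp(E)}(1)\otimes \pi^\ast \Oo_X(1)|$ (whose sections are $H^0(E(1))$); (2) reduce via Corollary \ref{intstrata} to the open stratum, handling the $(2,1,1,1,1)$ stratum and all others trivially since they already have dimension $\leq 19$ (and in fact one wants $\leq 18$ there, which follows since those strata have dimension $19$ and imposing one more nontrivial condition drops to $18$ — here one must check the condition is not automatically satisfied on the stratum, using that $E(1)$ is globally generated); (3) on the open stratum, fiber over $M(4)$ (dimension $2$) and show that for fixed $E$ the locus of six-point configurations in $\pp(E)$ with dependent evaluation functionals has dimension $\leq 16$, i.e. codimension $\geq 2$ in $\pp(E)^6$; (4) conclude dimension $\leq 2+16 = 18$.

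The main obstacle is step (3): proving that the evaluation map $\pp(E)\to \pp^{10}$ is sufficiently nondegenerate — in particular, that it is not contained in any proper linear subspace and does not have too-large secant loci — so that the general-position count applies. This requires genuine input about the geometry of $E(1)$ for $E\in M(4)$, which is exactly what Theorem \ref{descrip45} and Lemma \ref{m4start} provide: $E = (\Rr_p|_X)^{\ast\ast}$, so $E(1)$ has an explicit resolution and one can compute the image of $\mathrm{ev}$, or at least verify it spans $\pp^{10}$ and that secants behave generically. If a direct secant estimate is awkward, the alternative — and this is presumably the route the paper takes given the reference to Ellingsrud-Lehn — is to bound instead the dimension of the incidence variety $\{(E,S,\text{section } s\in H^0(E(1)) \text{ vanishing on } S)\}$ and compare with the base, using the Ellingsrud-Lehn dimension formula for the $\mathrm{Quot}$ scheme of $E(1)$ at a point to control how many quotients $S$ a given section can vanish on; this replaces the secant-variety geometry by a clean dimension count on Quot schemes.
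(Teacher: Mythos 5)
Your reduction is correct and matches the paper's: using $h^1(E(1))=0$ and $h^0(E(1))=11$ from Lemma \ref{m4start}, the condition $h^1(F(1))\geq 1$ becomes non-surjectivity of $H^0(E(1))\to H^0(S(1))\cong\cc^6$, and the Ellingsrud--Lehn stratification reduces everything to the open stratum of six distinct rank-one quotients plus the codimension-one stratum $(2,1,1,1,1)$, which is handled by a general-position remark exactly as you indicate. The gap is in step (3), which you yourself flag as the main obstacle: the assertion that for the image $3$-fold $\mathrm{ev}(\pp(E))\subset\pp^{10}$ the locus of dependent $6$-tuples has codimension $\geq 2$ is not a ``standard general-position estimate'' for an arbitrary nondegenerate $3$-fold. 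The dangerous configuration is secant-defectivity: if the $\pp^4$ spanned by $5$ general points of the image met the image in a surface, the dependent locus would already have codimension $1$ and the statement would fail. Ruling this out requires concrete information about the embedding that the proposal does not supply, and the paper does not go this route.

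Your fallback suggestion (bound an incidence variety of pairs $(S,u)$ with $u\in H^0(E(1))$ vanishing on $S$) is closer in spirit to the paper's argument but, as stated, gives nothing: since $h^0(E(1))=11>6$, \emph{every} length-$6$ quotient admits a nonzero section vanishing on it, so that incidence variety surjects onto the whole stratum. The bad condition is not ``kernel nonzero'' but ``kernel of dimension $\geq 6$.'' The paper's proof contains two ideas needed to close this. First, it converts corank $\geq 1$ into a nonzero-kernel condition by adjoining $5$ auxiliary general rank-one quotients $(y_j,t_j)$, so that a bad $6$-tuple together with \emph{any} $5$-tuple admits a section vanishing at all $11$ quotients; the resulting incidence variety $W$ over $\pp H^0(E(1))\cong\pp^{10}$ has dimension $\leq 32$ with a unique $32$-dimensional component, because for a non-special section the locus of rank-one quotients killing it is $2$-dimensional in $\pp(E)$. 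Second, a symmetry argument finishes: if the bad locus $K$ had an irreducible component of dimension $17$ (codimension $1$), then $K'\times Q'_5(E)$ would be the unique $32$-dimensional component of $p(W)$; but $p(W)$'s top component is symmetric under permuting all $11$ quotients, which would force it to be everything --- a contradiction. Neither the augmentation by auxiliary points nor the permutation-symmetry step appears in your proposal, and without them the dimension count does not yield codimension $\geq 2$.
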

\begin{proof}
Use the exact sequence
$$
0\rightarrow F\rightarrow E\rightarrow S\rightarrow 0
$$
where $E\in M(4)$. One has $h^1(E(1))=0$ for all $E\in M(4)$, see Lemma \ref{m4start}.
Therefore, $h^1(F(1))=0$ is equivalent
to saying that the map
\begin{equation}
\label{showsurj}
H^0(E(1))\rightarrow H^0(S(1)) \cong \cc ^6
\end{equation}
is surjective.

Considering the theorem of Ellingsrud-Lehn, there are two strata to be looked at: the case of a direct sum
of six quotients of rank $1$ over distint points, to be treated below; and the case of
a direct sum of four quotients of rank $1$ and one quotient of rank $2$. However, this latter stratum
already has codimension $1$, and it is irreducible. So, for this stratum it suffices to note that a general
quotient $E\rightarrow S$ in it leads to a surjective map \eqref{showsurj}, which may be
seen by a classical general position argument, placing first the quotient of rank $2$.

Consider now the stratum of quotients which are the direct sum of six rank $1$ quotients $s_i$ at distinct points
$x_i\in X$.
Fix the bundle $E$. The space of choices of the six quotients $(x_i,s_i)$ has dimension $18$. We claim that the
space of choices such that \eqref{showsurj} is not surjective, has codimension $\geq 2$.

Note that $h^0(E(1))= 11$. Given six quotients $(x_i,s_i)$, if the map \eqref{showsurj} (with $S=\bigoplus S_i$)
is not surjective, then its kernel  has dimension $\geq 6$, so if we choose five additional points $(y_j,t_j)\in \pp (E)$
with $t_j:E_{y_j}\rightarrow T_j$ for $T_i$ of length $1$,
the total evaluation map
\begin{equation}
\label{toteval}
H^0(E(1))\rightarrow  \bigoplus _{i=1}^6 S_i(1) \oplus \bigoplus _{j=1}^5 T_j(1)
\end{equation}
has a nontrivial kernel. Consider the variety
$$
W:= \{ (u, \ldots (x_i,s_i)\ldots , \ldots (y_j,t_j)\ldots ) \mbox{  s.t.  }0\neq  u\in H^0(E(1)), s_i(u)=0, t_j(u)=0 \}
$$
with the nonzero section $u$ taken up to multiplication by a scalar.

Let $Q'_6(E)$ and $Q'_5(E)$ denote the open subsets of the quotient schemes of length $6$ and length $5$ quotients of $E$ respectively,
open subsets consisting of quotients which are direct sums of rank  one quotients over distinct points.
Let $K\subset Q'_6(E)$ denote the locus of quotients $E\rightarrow S$ such that the kernel sheaf $F$ has $h^1(F(1))\geq 1$.
It is a proper closed subset, since it is easy to see that a general quotient $E\rightarrow S$ leads to a surjection \eqref{showsurj}.
The above argument with \eqref{toteval} shows that
$K\times Q'_5(E)\subset p(W)$ where $p:W\rightarrow Q'_6(E)\times Q'_5(E)$ is the projection forgetting the first variable $u$.
Our goal is to show that $K$ has dimension $\leq 16$.

We claim that $W$ has dimension $\leq 32$ and has a single irreducible component of dimension $32$. To see this, start by noting
that
the choice of $u$ lies in the projective space $\pp ^{10}$ associated to $H^0(E(1))\cong \cc^{11}$.

For a section $u$ which
is special in the sense that its scheme of zeros has positive dimension, the locus of choices of $(x_i,s_i)$ and $(y_j,t_j)$
has dimension $\leq 22$, but might have several irreducible components depending on whether the points are on the
zero-set of $u$ or not. However, the space of sections $u$ which are special in this sense, is equal to the space of
pairs $u'\in H^0(E)$, $u''\in H^0(\Oo _X(1))$ up to scalars for both pieces, and this has dimension $2+3=5$,
which is much smaller than the dimension of the space of all sections $u$. Therefore, these pieces don't contribute anything
of dimension higher than $27$.

For a section $u$ which is not special in the sense of the previous paragraph, the space of choices of a single rank $1$ quotient
$(x,s)$ which vanishes on the section, has a single irreducible component of dimension $2$. It might possibly have some pieces
of dimension $1$ corresponding to quotients located at the zeros of $u$ (although we don't think so). Hence, the
space of choices of point in $W$ lying over the section $u$, has dimension $\leq 22$ and has a single irreducible component of
dimension $22$.

Putting these together over $\pp ^{10}$, the dimension of $W$ is $\leq 32$ and it has a single irreducible component of dimension $32$,
as claimed.
Its image $p(W)$ therefore also has dimension $\leq 32$, and has at most one irreducible component of dimension $32$. Denote
this component, if it exists, by $p(W)'$.

Suppose now that $K$ had an irreducible component $K'$ of dimension $17$. Then $K'\times Q'_5(E) \subset p(W)$,
but ${\rm dim}(Q'_5(E))=15$ so
$p(W)'$ would exist and would be equal to $K'\times Q'_5(E)$. However, $p(W)'$ is symmetric under permutation of the
$11$ different variables $(x,s)$ and $(y,t)$, but that would then imply that $P(W)'$ was the whole of $Q'_6(E)\times Q'_5(E)$
which is not the case. Therefore, $K$ must have codimension $\geq 2$.
This completes the proof of the proposition.
\end{proof}

\begin{corollary}
\label{intsn}
Suppose $M(10,4)\cap \overline{M(10)}$ is nonempty. Then
a general point of this intersection corresponds to a torsion-free sheaf with $h^1(F(1))=0$.
\end{corollary}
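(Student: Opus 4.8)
The plan is to finish by a pure dimension comparison, since all of the substantive work has already been carried out in Proposition~\ref{codim2}. First I would record that $M(10,4)\cap\overline{M(10)}$ has pure dimension $19$ inside $M(10,4)$, the latter being irreducible of dimension $20$. This equidimensionality comes either from O'Grady's Lemma~\ref{codim1} applied to the closed subset $M(10)\subset\overline{M}(10)$ (its boundary has pure codimension $1$, and $\dim\overline{M(10)}=\dim M(10)=20$), or, more directly, from Hartshorne's theorem as used in Corollary~\ref{meetsingood}, which actually identifies the intersection with the \emph{unique} $19$-dimensional irreducible component of usual triples in $M(10,4)^{\rm sing}$.

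Next I would invoke Proposition~\ref{codim2}: the locus
\[
K:=\{F\in M(10,4)\ :\ h^1(F(1))\geq 1\}
\]
has codimension $\geq 2$ in $M(10,4)$, hence $\dim K\leq 18$. Every irreducible component of $M(10,4)\cap\overline{M(10)}$ has dimension $19>18$, so none of them can be contained in $K$; equivalently, $K$ meets $M(10,4)\cap\overline{M(10)}$ in a proper closed subset. Consequently the general point of $M(10,4)\cap\overline{M(10)}$ lies in the complement of $K$, i.e.\ corresponds to a torsion-free sheaf $F$ with $h^1(F(1))=0$, which is exactly the assertion.

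I do not expect a genuine obstacle here: this corollary is a bookkeeping consequence of Proposition~\ref{codim2} (which is where the real work with Ellingsrud--Lehn lives) together with the pure-codimension-one behaviour of the boundary. The only point requiring a moment's care is that ``general point of the intersection'' be meaningful and that such a point avoid the smaller-dimensional bad locus $K$; this is guaranteed precisely by the equidimensionality statement of the first paragraph, after which the codimension count closes the argument immediately.
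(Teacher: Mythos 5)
Your argument is correct and is essentially identical to the paper's own proof: both establish pure dimension $19$ for the intersection via O'Grady/Hartshorne, then compare against the $\leq 18$-dimensional locus $\{h^1(F(1))\geq 1\}$ furnished by Proposition~\ref{codim2}. Nothing further is needed.
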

\begin{proof}
By Hartshorne's or O'Grady's theorem, if the intersection is nonempty then it has pure dimension $19$.
However, the space of torsion-free sheaves $F\in M(10,4)$ with $h^1(F(1))>0$ has dimension $\leq 18$ by
Proposition \ref{codim2}. Thus, a general point in any irreducible component of $M(10,4)\cap \overline{M(10)}$
must have $h^1(F(1))=0$. In fact there can be at most one irreducible component, by Corollary \ref{meetsingood}.
\end{proof}

\section{Irreducibility for $c_2=10$}
\label{sec10}

\begin{corollary}
\label{forseminat}
Suppose $Z$ is an irreducible component of $M(10)$. Then, for a general point $F$ in any irreducible component of the
intersection of $\overline{Z}$ with the boundary, we have $h^1(F(1))=0$.
\end{corollary}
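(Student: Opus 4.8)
The plan is to reduce everything to the stratum-by-stratum computations already in hand (Proposition \ref{firstthree} and Corollary \ref{intsn}), using the dimension count of Table \ref{maintable} and O'Grady's purity lemma. First I would note that the closure $\overline{Z}$ of $Z$ in $\overline{M}(10)$ is an irreducible component of $\overline{M}(10)$: since $M(10)$ is open in $\overline{M}(10)$, the closure of any of its irreducible components is a component of $\overline{M}(10)$, and it is distinct from the stratum $M(10,4)$ because $\overline{Z}$ contains locally free sheaves while $M(10,4)$ does not. Consequently Corollary \ref{meetsboundary} applies, so $\overline{Z}\cap\partial\overline{M}(10)$ is nonempty of codimension $1$, i.e.\ of dimension $19$, and by Lemma \ref{codim1} it is of pure dimension $19$.

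Let $W$ be an irreducible component of $\overline{Z}\cap\partial\overline{M}(10)$, so $\dim W=19$. Since $\partial\overline{M}(10)=\coprod_{4\le c_2'\le 9}M(10,c_2')$ is a disjoint union of locally closed strata and $W$ is irreducible, the generic point of $W$ lies in exactly one stratum $M(10,c_2')$; then $W\cap M(10,c_2')$ is dense open in $W$, forcing $\dim M(10,c_2')\ge 19$. Reading the row $c_2=10$ of Table \ref{maintable}, the strata $M(10,7)$ and $M(10,5)$ have dimension $18$ and so are excluded, leaving only $c_2'\in\{9,8,6\}$ (dimension $19$) or $c_2'=4$ (dimension $20$).

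It remains to handle these cases. If $c_2'\in\{9,8,6\}$, then $M(c_2')$ is irreducible by Proposition \ref{leq9}, hence by Corollary \ref{forward} the stratum $M(10,c_2')$ is irreducible of dimension $19$; as $W\cap M(10,c_2')$ is dense in the irreducible $19$-dimensional $W$, necessarily $W=\overline{M(10,c_2')}$, so a general point $F$ of $W$ lies in $M(10,c_2')$ generically and Proposition \ref{firstthree} gives $h^1(F(1))=0$. If $c_2'=4$, then $W\subset\overline{Z}\cap M(10,4)\subset\overline{M(10)}\cap M(10,4)$, so this intersection is nonempty; Corollary \ref{meetsingood} shows it has a single $19$-dimensional irreducible component, which must therefore be $W$, and Corollary \ref{intsn} shows that its general point carries a sheaf $F$ with $h^1(F(1))=0$. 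This exhausts the cases and proves the corollary.

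The genuinely delicate case is $c_2'=4$, but its difficulty has already been absorbed into Section \ref{lowest} (ultimately the Ellingsrud--Lehn estimate of Proposition \ref{codim2}, packaged as Corollaries \ref{meetsingood} and \ref{intsn}); what is left here is purely bookkeeping --- ruling out by dimension every boundary stratum except the four named ones, together with the routine observation that $\overline{Z}$ is a component of the full torsion-free moduli space so that the earlier corollaries are applicable. So I do not anticipate a real obstacle beyond being careful with these reductions.
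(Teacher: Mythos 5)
Your proposal is correct and follows essentially the same route as the paper: O'Grady's purity lemma gives pure dimension $19$, the row $c_2=10$ of Table \ref{maintable} rules out all boundary strata except $M(10,9)$, $M(10,8)$, $M(10,6)$ and $M(10,4)$, and then Proposition \ref{firstthree} handles the first three while Corollaries \ref{meetsingood} and \ref{intsn} handle the last. You are somewhat more explicit than the paper about the bookkeeping (that $\overline{Z}$ is a component of $\overline{M}(10)$, that a component $W$ of the boundary intersection generically lies in a single stratum and hence equals the closure of that stratum), but these are exactly the steps the paper leaves implicit.
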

\begin{proof}
By O'Grady's lemma, the intersection of $\overline{Z}$ with the boundary has pure dimension $19$.
By considering the line $c_2=10$ in the Table \ref{maintable},
this subset must be a union of some of the irreducible subsets $\overline{M(10,9)}$, $\overline{M(10,8)}$,
$\overline{M(10,6)}$, and the unique $19$-dimensional irreducible component of
$M(10,4)^{\rm sing}$ given by Proposition \ref{usual}.
Combining Proposition \ref{firstthree} and Corollary \ref{intsn}, we conclude that
$Z$ contains a point $F$ such that $h^1(F(1))=0$. Thus, $h^1(E(1))=0$
for a general bundle $E$ parametrized by a point of $Z$.
\end{proof}

\begin{corollary}
\label{seminat10}
Suppose $Z$ is an irreducible component of $M(10)$. Then the bundle $E$ parametrized by a general point of $Z$ has
seminatural cohomology, and $Z$ is the closure of the irreducible open set $M(10)^{\rm sn}$.
\end{corollary}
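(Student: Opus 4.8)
The plan is to read the statement off from Corollary \ref{forseminat} together with Proposition \ref{sn} and the irreducibility of $M(10)^{\rm sn}$ established in \cite{MestranoSimpson2}. First I would recall that by Corollary \ref{meetsboundary} the closure $\overline{Z}$ meets the boundary $\partial\overline{M}(10)$ in a nonempty subset, and that Corollary \ref{forseminat} then guarantees that for a general point $F$ in any irreducible component of $\overline{Z}\cap\partial\overline{M}(10)$ one has $h^1(F(1))=0$; in particular $\overline{Z}$ contains a point $F$ with $h^1(F(1))=0$. This is the step carrying the real weight of Sections \ref{seminat19} and \ref{lowest}: Proposition \ref{firstthree} handles the three $19$-dimensional strata $M(10,9)$, $M(10,8)$, $M(10,6)$, and Corollary \ref{intsn}, resting on the Ellingsrud--Lehn dimension count of Proposition \ref{codim2}, handles $M(10,4)\cap\overline{M(10)}$.

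Granting that $\overline{Z}$ contains a point $F$ with $h^1(F(1))=0$, Proposition \ref{sn} applies verbatim and shows that $Z$ is the unique irreducible component of $M(10)$ containing the open set $M(10)^{\rm sn}$ of bundles with seminatural cohomology. It then remains to identify this component with $\overline{M(10)^{\rm sn}}$. Here I would use that $M(10)$ is good of dimension $20$ by Proposition \ref{singularlocus}, so $Z$ has dimension exactly $20$, while $M(10)^{\rm sn}$, being a nonempty open subset of $M(10)$, also has dimension $20$ and is irreducible by Theorem $0.2$ of \cite{MestranoSimpson2}. A $20$-dimensional subset of the irreducible $20$-dimensional variety $Z$ is dense in it, hence $Z=\overline{M(10)^{\rm sn}}$; consequently a general point of $Z$ lies in $M(10)^{\rm sn}$ and parametrizes a bundle with seminatural cohomology, which is the assertion of the corollary. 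Applying this to every irreducible component of $M(10)$ yields at the same time the irreducibility of $M(10)$.

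I do not expect a genuinely difficult step inside this corollary: it is a bookkeeping consequence of the earlier results. The only place requiring care is checking that the hypothesis of Proposition \ref{sn} is actually available, namely that $\overline{Z}$ meets $\partial\overline{M}(10)$ at all, so that there is a boundary sheaf $F$ with $h^1(F(1))=0$ forcing $h^1(E(1))=0$ for a general bundle $E$ of $Z$. This is exactly Corollary \ref{meetsboundary}, which in turn rests on Nijsse's connectedness theorem and Hartshorne's connectedness theorem through Corollary \ref{meetalongV}; were $Z$ to fail to reach the boundary, the argument would collapse, so this is the load-bearing input.
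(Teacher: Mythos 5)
Your proposal is correct and follows essentially the same route as the paper: Corollary \ref{meetsboundary} to reach the boundary, Corollary \ref{forseminat} to produce a boundary sheaf $F$ with $h^1(F(1))=0$, and Proposition \ref{sn} to conclude. The closing dimension count identifying $Z$ with $\overline{M(10)^{\rm sn}}$ is harmless but unnecessary, since Proposition \ref{sn} already exhibits $M(10)^{\rm sn}$ as a dense open subset of $Z$.
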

\begin{proof}
The closure of $Z$ meets the boundary in a nonempty subset,
by Corollary \ref{meetsboundary}. By the previous Corollary \ref{forseminat}, there exists a point $F$ in $\overline{Z}$
with $h^1(F(1))=0$, thus the general bundle $E$ in $Z$ also satisfies $h^1(E(1))=0$.
By Proposition \ref{sn}, the irreducible moduli space $M(10)^{\rm sn}$ of bundles with seminatural cohomology
is an open set of $Z$.
\end{proof}

\begin{theorem}
\label{irred10}
The moduli space $M(10)$ of stable bundles of degree $1$ and $c_2=10$, is irreducible.
\end{theorem}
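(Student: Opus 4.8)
The plan is to deduce the theorem almost immediately from Corollary \ref{seminat10}. Let $Z\subseteq M(10)$ be an arbitrary irreducible component. First I would invoke Corollary \ref{meetsboundary} to guarantee that $\overline{Z}$ meets the boundary $\partial\overline{M}(10)$ in a nonempty subset of codimension $1$; this is the step where Nijsse's connectedness of $\overline{M}(10)$ and Hartshorne's connectedness theorem (via Corollaries \ref{meetsing} and \ref{meetalongV}) are used. Next, by Corollary \ref{forseminat} — whose proof combines the generic vanishing $h^1(F(1))=0$ along the three $19$-dimensional strata $M(10,9)$, $M(10,8)$, $M(10,6)$ established in Proposition \ref{firstthree} with the codimension estimate along the lowest stratum $M(10,4)$ coming from Proposition \ref{codim2} through Corollary \ref{intsn} — the closure $\overline{Z}$ contains a point $F$ with $h^1(F(1))=0$.

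Then I would apply Proposition \ref{sn} (equivalently, Corollary \ref{seminat10}): since $\overline{Z}$ contains such a point $F$, the component $Z$ must be the unique irreducible component containing the open set $M(10)^{\rm sn}$ of bundles with seminatural cohomology; equivalently, the general bundle parametrized by $Z$ has seminatural cohomology and $Z=\overline{M(10)^{\rm sn}}$. Since this conclusion holds for \emph{every} irreducible component $Z$ of $M(10)$, and $M(10)^{\rm sn}$ is nonempty and irreducible by Theorem 0.2 of \cite{MestranoSimpson2}, there can be only one irreducible component, namely $\overline{M(10)^{\rm sn}}$. Hence $M(10)$ is irreducible. (Nonemptiness is clear, e.g.\ from ${\rm dim}\,M(10)=20$ in Table \ref{maintable}.)

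The genuine obstacle does not lie in this final assembly, which is purely formal, but in the work already carried out in Section \ref{lowest}: the delicate points are controlling how $\overline{M(10)}$ can meet the $20$-dimensional lowest stratum $M(10,4)$, namely showing that $M(10,4)^{\rm sing}$ has a single $19$-dimensional component of usual triples (Proposition \ref{usual}) and that the locus of sheaves $F\in M(10,4)$ with $h^1(F(1))>0$ has codimension $\geq 2$ (Proposition \ref{codim2}), the latter resting on the Ellingsrud--Lehn dimension bound for Quot schemes (Theorem \ref{el}). Once these are in hand, Theorem \ref{irred10} follows by chaining Corollaries \ref{meetsboundary}, \ref{forseminat} and \ref{seminat10}.
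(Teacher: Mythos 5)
Your proposal follows exactly the paper's own route: Theorem \ref{irred10} is deduced from Corollary \ref{seminat10}, which in turn rests on Corollaries \ref{meetsboundary} and \ref{forseminat} and the supporting results of Sections \ref{seminat19} and \ref{lowest}, precisely as you describe. The argument and the identification of where the real work lies are both correct.
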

\begin{proof}
By Corollary \ref{seminat10}, any irreducible component of $M(10)$ contains a dense open set parametrizing
bundles with seminatural cohomology. By the main theorem of \cite{MS2}, there
is only one such irreducible component.
\end{proof}

\begin{theorem}
\label{tf10}
The full moduli space of stable torsion-free sheaves $\overline{M} (10)$ of degree $1$ and
$c_2=10$, has two irreducible components, $\overline{M(10)}$ and $M(10,4)$ meeting
along the irreducible component of usual triples in $M(10,4)^{\rm sing}$.
These two components have the expected dimension, $20$, hence the moduli space is good and connected.
\end{theorem}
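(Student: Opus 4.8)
The plan is to assemble the statement from results already established. First I would record the two components. The stratum $M(10,4)$ is closed (it is the lowest stratum of the double dual stratification), and by Corollary \ref{forward} applied to the isomorphism $M(4)\cong X$ of Theorem \ref{descrip45} it is irreducible of dimension $\dim M(4)+3\cdot 6 = 2+18 = 20$; as was observed in Section \ref{strata}, it is therefore an irreducible component of $\overline{M}(10)$. On the other hand, $M(10)$ is irreducible by Theorem \ref{irred10}, so its closure $\overline{M(10)}$ is an irreducible closed subset; since $M(10)$ is good (Proposition \ref{singularlocus}) its dimension is $4\cdot 10-20 = 20$, and $\overline{M(10)}$ is again an irreducible component. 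The two are distinct: a general point of $M(10,4)$ is a non-locally-free sheaf, whereas $M(10)\subset\overline{M(10)}$ is dense in $\overline{M(10)}$ and consists entirely of bundles, so neither of these two equidimensional irreducible closed sets contains the other.

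Next I would show there are no further components. Any irreducible component of $\overline{M}(10)$ has dimension at least the expected dimension $4c_2-20 = 20$. Writing $\overline{M}(10) = M(10)\sqcup\coprod_{4\le c_2'\le 9} M(10,c_2')$ and reading off the row $c_2=10$ of Table \ref{maintable}, every stratum $M(10,c_2')$ with $5\le c_2'\le 9$ has dimension at most $19$; hence the generic point of any component must lie in $M(10)$ or in $M(10,4)$, and by irreducibility together with equality of dimensions that component is $\overline{M(10)}$ or $M(10,4)$. So $\overline{M}(10)$ has exactly these two irreducible components, and each has dimension $20$, which is the expected dimension $4c_2-20$; hence $\overline{M}(10)$ is good in the sense used in this paper, and it is connected by Nijsse's theorem.

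It remains to locate the intersection of the two components. Since $\overline{M}(10)$ is connected and equals the union of the two distinct irreducible components $\overline{M(10)}$ and $M(10,4)$, these components cannot be disjoint, so $\overline{M(10)}\cap M(10,4)$ is nonempty. Corollary \ref{meetsingood} then identifies this intersection precisely: it is the unique $19$-dimensional irreducible component of the singular locus $M(10,4)^{\rm sing}$, namely the closure of the locus of usual triples $(E,\varphi,\sigma)$ of Proposition \ref{usual}. (Unwound, this is Hartshorne's connectedness theorem via Corollary \ref{meetsing}: goodness makes $\overline{M}(10)$ a local complete intersection, so the intersection has pure codimension $1$; it is contained in $M(10,4)\cap{\rm Sing}(\overline{M}(10)) = M(10,4)^{\rm sing}$, whose only $19$-dimensional irreducible component, by Proposition \ref{usual}, is the one of usual triples.)

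I do not expect a genuine obstacle at this stage: the one input that is not pure bookkeeping, nonemptiness of $\overline{M(10)}\cap M(10,4)$, is delivered for free by Nijsse's connectedness statement once two distinct components are in hand, and all the substantive work is already done---irreducibility of $M(10)$ (Theorem \ref{irred10}, resting on \cite{MestranoSimpson2}) and the structure of $M(10,4)^{\rm sing}$ together with the codimension estimate for $h^1(F(1))>0$ in Section \ref{lowest}. If one wished to bypass connectedness, one could instead exhibit a single sheaf lying in both closures---for instance the kernel $F$ of a suitably degenerated usual triple, chosen with $h^1(F(1))=0$ via Proposition \ref{codim2} so that $F$ is a smooth limit of bundles with seminatural cohomology---and verifying that construction directly would then be the only delicate point.
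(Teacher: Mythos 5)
Your proposal is correct, and its overall architecture coincides with the paper's: identify the two $20$-dimensional pieces, rule out other components by the dimension count in Table \ref{maintable}, conclude goodness, and then pin down the intersection via Corollary \ref{meetsingood}. The one place you diverge is the nonemptiness of $\overline{M(10)}\cap M(10,4)$. You derive it abstractly from Nijsse's connectedness theorem (two distinct irreducible components of a connected space whose union is the whole space must meet), which is a legitimate and complete argument --- indeed the paper explicitly remarks that nonemptiness ``is a consequence of Nijsse's connectedness theorem'' --- but the paper then chooses to exhibit the intersection concretely instead: it degenerates extensions $0\rightarrow \Oo_X\rightarrow E\rightarrow J_{P/X}(1)\rightarrow 0$ with $P=\ell\cap X$ to show $M(5,4)\cap\overline{M(5)}\neq\emptyset$, lifts this to $M(10,4)\cap\overline{M(10,5)}\neq\emptyset$, and uses that $\overline{M(10,5)}\subset\overline{M(10)}$ because all strata other than $M(10,4)$ have dimension $\leq 19$. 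What the explicit route buys is independence from the connectedness input at this step and a concrete description of which boundary sheaves actually arise as limits of bundles (which feeds the ``physics discussion'' that follows); what your route buys is brevity, at the cost of leaning once more on \cite{Nijsse}. Your closing suggestion of instead producing a single smooth common point via a degenerated usual triple is closer in spirit to what the paper does for $c_2=11$ (Lemma \ref{seventh}) than to its $c_2=10$ argument, and is not needed here.
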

\begin{proof}
Recall that we know $M(10,4)$ is irreducible by the results of \cite{MS1}.
Also $M(10)$ is irreducible. Any component has dimension $\geq 20$,
and by looking at the dimensions in Table \ref{maintable}, these are the
only two possible irreducible components. Since they have dimension $20$ which is the expected dimension,
it follows that the
moduli space is good.

It remains to be proven that these two components do indeed intersect in a nonempty subset, which
then by Corollary \ref{meetsingood}
has to be the irreducible component of usual triples in $M(10,4)^{\rm sing}$. Notice that
Corollary \ref{meetsingood} did not say that the intersection was necessarily nonempty, since
it started from the hypothesis that there was a meeting point. It is
a consequence of Nijsse's connectedness theorem that the intersection is nonempty,
but this may be seen more concretely as follows.

Consider the stratum $M(10,5)$. Recall from
\cite{MS1} that the moduli space $M(5)$ consists of bundles which fit into
an exact sequence of the form
$$
0\rightarrow \Oo _X \rightarrow E \rightarrow J_{P/X}(1)\rightarrow 0,
$$
such that $P= L \cap X$ for $L \subset \pp ^3$ a line. In what follows,
choose $L$ general so that $P$ consists of $5$ distinct points.

The space of extensions
${\rm Ext}^1(J_{P/X}(1), \Oo  _X)$ is dual to ${\rm Ext}^1(\Oo _X, J_{P/X}(2))=H^1(J_{P/X}(2))$.
We have the exact sequence
$$
H^0(\Oo _X(2))\rightarrow H^0(\Oo _P(2))\rightarrow H^1(J_{P/X}(2))\rightarrow 0.
$$
However, $H^0(\Oo _X(2))=H^0(\Oo _{\pp^3}(2))$ and the map to $H^0(\Oo _P(2))$ factors
through $H^0(\Oo _{L}(2))$, the space of degree two forms on $L \cong \pp ^1$,
which has dimension $3$. Hence, the cokernel $H^1(J_{P/X}(2))$ has dimension $2$.
The extension classes which correspond to bundles, are the linear forms on $H^1(J_{P/X}(2))$
which don't vanish on any of the images of the lines in $H^0(\Oo _P(2))$ corresponding
to the $5$ different points. Since $X$ is general,
the collection of $5$ points $X\cap L$ is not in a special 
position in $\pp^1$, so the images of the lines are distinct
in the two dimensional space
$H^1(J_{P/X}(2))$. So we can find a family
of extension classes whose limiting point is an extension which vanishes on one of the lines
corresponding to a point in $P$. This gives a degeneration towards a torsion-free sheaf
with a single non-locally free point, still sitting in a nontrivial extension of the above form.
We conclude that the limiting bundle is still stable, so we have constructed a degeneration
from a point of $M(5)$, to the single boundary stratum $M(5,4)$.

Notice that the dimension of $M(5,4)$ is bigger than that of $M(5)$, so the set of limiting points
is a strict subvariety of $M(5,4)$. We have $\overline{M} (5)=M(5) \cup M(5,4)$, and
we have shown that the closures of these two strata have nonempty intersection.
This fact is also a consequence of the more explicit description of $\overline{M(5)}$
stated in Theorem \ref{descrip45} above (but where the proof was left to the reader).

Moving up to $c_2=10$, it follows that the closure of the stratum $M(10,5)$ intersects $M(10,4)$.
However, $M(10,4)$ is closed, and the remaining strata of the boundary have dimension $\leq 19$,
so all of the other strata in the boundary, in particular $M(10,5)$,
are contained in the closure of the locus of
bundles $\overline{M(10)}$. Thus, $\overline{M(10,5)}\subset \overline{M(10)}$,
but $M(10,4)\cap \overline{M(10,5)}\neq \emptyset$, proving that the intersection
$M(10,4)\cap \overline{M(10)}$ is nonempty.
\end{proof}

\noindent
{\em Physics discussion:}
From this fact, we see that there are degenerations of stable bundles in $M(10)$, near to boundary
points in $M(10,4)$. Donaldson's Yang-Mills metrics then degenerate towards Uhlenbeck boundary points,
metrics where $6$ instantons appear. However, these degenerations go not to all points in $M(10,4)$ but only to ones which are in the
irreducible subvariety $M(10,4)^{\rm sing}\subset M(10,4)$ consisting of points on the internal spectral
variety of a nonzero Higgs field $\varphi : E\rightarrow E\otimes K_X$. It gives a constraint of a global nature
on the $6$-tuples of instantons which can appear in Yang-Mills metrics on a stable bundle $F\in M(10)$.
It would be interesting to understand the geometry of the Higgs field which shows up, somewhat virtually, in the limit.

\section{Irreducibility for $c_2\geq 11$}
\label{sec11}

Consider next the moduli space $\overline{M} (11)$ of stable torsion-free sheaves of degree one and $c_2=11$.
The moduli space is good, of dimension $24$. From Table \ref{maintable}, the dimensions of
the boundary strata are all $\leq 23$, so the set of irreducible components of
$\overline{M} (11)$ is the same as the set of irreducible components of
$M(11)$. Suppose $Z$ is an irreducible component. By Corollary \ref{meetsboundary},
$Z$ meets the boundary in a nonempty subset of codimension $1$, i.e. dimension $23$.
From Table \ref{maintable}, the only two possibilities are $M(11,10)$ and $M(11,4)$.
Note that $M(11,4)$ is closed since it is the lowest stratum; it is irreducible by
Li's theorem and irreducibility of $M(4)$. The stratum $M(11,10)$ is irreducible
because of Theorem \ref{irred10}.

\begin{lemma}
\label{intnonempty}
The intersection $M(11,4)\cap \overline{M(11,10)}$ is a
nonempty subset containing, in particular, points which are torsion-free
sheaves $F'$ entering into an exact sequence of the form
$$
0\rightarrow F' \rightarrow F \rightarrow S_x\rightarrow 0
$$
where $F$ is a usual point of $M(10,4)^{\rm sing}$, $x\in X$ is a general point, and
$F\rightarrow S_x$ is a general rank one quotient.
\end{lemma}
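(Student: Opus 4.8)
The plan is to produce a sheaf $F'$ of the stated form as a flat limit of sheaves in $M(11,10)$, degenerating the double dual rather than the length quotient. Fix a general usual point $F = F_{(E,\varphi ,\sigma )}\in M(10,4)^{\rm sing}$ in the sense of Definition \ref{defusual}, so $E=F^{\ast\ast}\in M(4)$, $\sigma$ is a length‑six quotient supported at six distinct points $x_1,\dots ,x_6$, and $F$ fails to be locally free exactly at those six points. By Theorem \ref{tf10} (together with Corollary \ref{meetsingood} and Proposition \ref{usual}), the intersection $\overline{M(10)}\cap M(10,4)$ is precisely the closure of the locus of usual triples, so $F\in \overline{M(10)}$. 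Hence, using that $\overline{M}(10)$ carries a universal family etale‑locally (Section \ref{strata}), there is a smooth affine curve $C$ with a point $0\in C$ and a sheaf $\Ee$ on $C\times X$, flat over $C$, with $\Ee _0\cong F$ and $\Ee _t\in M(10)$ (stable and locally free) for all $t$ in a dense open $C^{\circ}\subseteq C$.

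Next I would pass to the relative Grothendieck scheme $Q:={\rm Quot}(\Ee /C,1)\to C$, which is proper over $C$. For $t\in C^{\circ}$ the fibre $Q_t$ is the projective bundle $\pp (\Ee _t)$, irreducible of dimension $3$, while $Q_0={\rm Quot}(F,1)$. The key geometric input is the locally free locus: $\Ee$ is locally free on the dense open set $W:=(C\times X)\setminus (\{0\}\times \{x_1,\dots ,x_6\})$, so the relative projective bundle $\pp (\Ee |_W)$ is irreducible of dimension $4$, it embeds into $Q$, and $\pp (\Ee |_W)\cap Q_0=\pp (F|_U)$ with $U:=X\setminus \{x_1,\dots ,x_6\}$. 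Since the generic fibre of $\pp (\Ee |_W)\to C$ is dense, $\pp (\Ee |_W)$ lies in the closure $\overline{Q_\eta}$ of the generic fibre; consequently $\overline{\pp (F|_U)}\subseteq \overline{Q_\eta}$, and $\overline{Q_\eta}$ is irreducible of dimension $4$, proper and surjective over $C$. Now, for a general $x\in X$ (so $x\in U$) and any rank‑one quotient $F\to S_x$, the point $[F\to S_x]$ lies on $\pp (F|_U)\subseteq \overline{Q_\eta}$ but in the fibre over $0$; since that fibre is a proper closed subset of the irreducible $\overline{Q_\eta}$, I can choose an irreducible curve $D$ in $\overline{Q_\eta}$ through $[F\to S_x]$ dominating $C$. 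Normalising $D$ and restricting the universal quotient of $\Ee$ to $D\times X$ gives $0\to \Ff \to \Ee _D\to \mcS \to 0$ with $\mcS$ of relative length $1$ and flat over $D$, hence $\Ff$ flat over $D$; its fibre at $d_0\mapsto [F\to S_x]$ is $F':=\ker (F\to S_x)$, while for $d$ over $C^{\circ}$ the sheaf $\Ff _d=\ker (\Ee _d\to \mcS _d)$ is the kernel of a length‑one quotient of a stable bundle $\Ee _d\in M(10)$, hence a point of $M(11,10)$. Therefore $F'\in \overline{M(11,10)}$.

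It then remains to verify $F'\in M(11,4)$. From $F'\subseteq F\subseteq E$, with $E$ locally free and $E/F'$ of finite length $\ell (S)+\ell (S_x)=7$, the sheaf $F'$ is torsion‑free of rank $2$ and degree $1$ with $c_2(F')=c_2(E)+7=11$; applying $\Oo _X$‑duality to $0\to F'\to E\to E/F'\to 0$ and using that a finite‑length sheaf has codimension $2$ gives $F'^{\ast}=E^{\ast}$, hence $F'^{\ast\ast}=E\in M(4)$; and $F'$ is stable because its reflexive hull $E$ is (slope‑ and Gieseker‑stability coincide in this rank‑two odd‑degree setting). Thus $F'\in M(11,4)\cap \overline{M(11,10)}$, which is therefore nonempty, and letting $x$ and $F\to S_x$ vary over general choices yields exactly the family of sheaves described in the statement.

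The step I expect to be the main obstacle is controlling which component of the special fibre $Q_0={\rm Quot}(F,1)$ is reached in the degeneration: it is not enough that $\overline{Q_\eta}$ meets $Q_0$, one needs the specific point $[F\to S_x]$ — equivalently, the ``main'' component $\overline{\pp (F|_U)}$ of ${\rm Quot}(F,1)$ — to lie in $\overline{Q_\eta}$. This is exactly what the locally‑free‑locus argument with $\pp (\Ee |_W)$ is designed to deliver; the remaining ingredients, namely flatness of the kernel family and the double‑dual computation $F'^{\ast\ast}=E$, are routine.
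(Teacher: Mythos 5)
Your proposal is correct and follows essentially the same route as the paper: start from a general usual point $F\in M(10,4)\cap \overline{M(10)}$ (nonempty by Theorem \ref{tf10}) and pass to the kernel of one further general rank-one quotient. The paper states the conclusion $F'\in M(11,4)\cap \overline{M(11,10)}$ without further justification, whereas you supply the missing verification that $F'$ really lies in $\overline{M(11,10)}$ via the relative ${\rm Quot}$-scheme degeneration and check $F'^{\ast\ast}=E$ explicitly; both steps are sound.
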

\begin{proof}
Theorem \ref{tf10} shows that the intersection
$M(10,4)\cap \overline{M(10)}$ is nonempty.
It is
the unique $19$-dimensional irreducible component of $M(10,4)^{\rm sing}$, containing the usual points.
Starting with a general point $F\in M(10,4)\cap \overline{M(10)}$
and taking an additional general rank $1$ quotient $S_x$, the subsheaf $F'$ gives
a point in $M(11,4)\cap \overline{M(11,10)}$.
\end{proof}

Let $Y \subset M(10,4)$ be the unique $19$-dimensional
irreducible component of the singular locus $M(10,4)^{\rm sing}$. It contains
a dense open set where the quotient $S$ is a direct sum of six quotients $(x_i,s_i)$ of
rank $1$. Choose a quasi-finite surjection $Y'\rightarrow Y$ such
that $(x_i,s_i)$ are
well defined as functions $Y'\rightarrow \pp (E)$.

Forgetting the quotients and
considering only the bundle $E$ gives a map $Y'\rightarrow M(4)$.
Fix a bundle $E$ in the image of $Y'\rightarrow M(4)$. Let $Y'_E$ denote the fiber of $Y'$ over $E$,
which has dimension $\geq 17$.

We claim that for any $0\leq k\leq 5$, there exists a choice of $k$ out of the $6$ points
such that the map $Y'_E\rightarrow \pp (E)^4$ is surjective. For $k=0$ this is automatic, so
assume that $k\leq 4$ and it is known for $k$; we need to show that it is true for $k+1$ points.
Reorder so that the $k$ points to be chosen, are
the first ones. For a general point $q\in \pp (E)^{k}$, let $Y'_{E,q}$ denote
the fiber of $Y'_E\rightarrow \pp (E)^{k}$ over $q$. We have ${\rm dim}(Y'_{E,q})\geq 17 -3k$.
We get an injection
$$
Y'_{E,q}\rightarrow \pp (E)^{6-k}.
$$
Suppose that the image mapped into a proper subvariety of each factor; then it would map into a
subvariety of dimension $\leq 2(6-k)$, which would give ${\rm dim}(Y'_{E,q})\leq 12-2k$. However,
for $k\leq 4$ we have $12-2k < 17 - 3k$, a contradiction. Therefore, at least one of the projections
must be a surjection $Y'_{E,q}\rightarrow \pp (E)$. Adding this point to our list, gives a list of
$k+1$ points such that the map $Y'_E\rightarrow \pp (E)^{k+1}$ is surjective. This completes the
induction, yielding the following lemma.

\begin{lemma}
\label{general5}
Suppose $Y\subset M(10,4)$ is as above.
Then for a fixed bundle $E\in M(4)$ corresponding to some points in $Y$, and
for a general point in the fiber $Y_E$ over $E$, some $5$ out of the $6$ quotients
correspond to a general point of $\pp (E)^5$.
\end{lemma}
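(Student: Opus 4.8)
The plan is to prove the stronger statement, by induction on $k$ for $0\le k\le 5$, that after reordering the six quotient points $(x_i,s_i)$ one may arrange that the evaluation morphism $Y'_E\to \pp(E)^k$ recording the first $k$ of them is dominant; the case $k=5$ is exactly the lemma. Before starting the induction I would record two facts. First, a dimension count: $Y$ is $19$-dimensional by Proposition \ref{usual}, the forgetful map $Y\to M(4)$ lands in the surface $M(4)$, and $Y'\to Y$ is quasi-finite, so the general fibre $Y'_E$ has dimension $\ge 17$; replacing $Y'$ by an irreducible component dominating $M(4)$ we may assume $Y'_E$ irreducible. Second, that a usual point $F=F_{(E,\varphi,\sigma)}$ over a fixed $E$ is recovered from the six points: by Proposition \ref{usual} the Higgs field $\varphi$ is determined up to scalar by a general set of six points of its internal spectral variety, and then $F=\ker(\bigoplus s_i)$; hence $Y'_E\to \pp(E)^6$ is generically finite onto its image, and so is the restriction to any partial fibre $Y'_{E,q}\to \pp(E)^{6-k}$ given by the remaining points.

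For the inductive step, assume $k\le 4$ and that the first $k$ points give a dominant map $Y'_E\to \pp(E)^k$. For general $q\in \pp(E)^k$ the fibre $Y'_{E,q}$ has dimension $\ge 17-3k$, and it maps generically finitely into $\pp(E)^{6-k}$ via the remaining points. If the image of $Y'_{E,q}$ were contained in a proper closed subvariety of \emph{every} one of those $6-k$ factors $\pp(E)$, each of dimension $\le 2$, then $\dim Y'_{E,q}\le 2(6-k)=12-2k$; but $12-2k<17-3k$ exactly when $k<5$, a contradiction. So for general $q$ the projection of $Y'_{E,q}$ to at least one of the remaining factors is dominant. As the index of such a factor lies in the finite set $\{k+1,\dots,6\}$ and the locus of $q$ for which a given index works is constructible, one index $j$ works for a dense set of $q$; adjoining the $j$-th point gives a dominant map $Y'_E\to \pp(E)^{k+1}$, completing the induction.

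The point needing the most care is the last implication: from ``the general fibre of $Y'_E\to \pp(E)^k$ dominates an extra factor $\pp(E)$'' to ``$Y'_E\to\pp(E)^{k+1}$ is dominant.'' This holds because the image of $Y'_E$ in $\pp(E)^{k+1}$ is a constructible set which surjects onto $\pp(E)^k$ by the inductive hypothesis and whose generic fibre over $\pp(E)^k$ is dense by the fibrewise statement, hence is dense in $\pp(E)^{k+1}$. The only other slightly delicate input is the generic finiteness of $Y'_E\to\pp(E)^6$, which rests on the uniqueness of $\varphi$ for a usual triple in Proposition \ref{usual}; this is what keeps the numerics $12-2k<17-3k$ tight enough to conclude. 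The remaining steps are routine dimension bookkeeping.
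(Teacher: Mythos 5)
Your proof is correct and follows essentially the same route as the paper's: the same induction on $k$, the same fibre-dimension bound $\dim Y'_{E,q}\geq 17-3k$ coming from $\dim Y=19$ and $\dim M(4)=2$, the same injectivity of $Y'_{E,q}\to\pp(E)^{6-k}$ (which in the paper follows simply from $F=\ker(\bigoplus s_i)$, without needing the uniqueness of $\varphi$), and the same numerical contradiction $12-2k<17-3k$ for $k\leq 4$. Your added remarks on constructibility and on passing from fibrewise dominance to dominance of $Y'_E\to\pp(E)^{k+1}$ are correct elaborations of steps the paper leaves implicit.
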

\hfill $\Box$

\begin{lemma}
\label{seventh}
Suppose $F$ is the torsion-free
sheaf parametrized by a general point of $Y$, and let $F'$ be defined by an exact sequence
$$
0\rightarrow F'\rightarrow F\stackrel{(x_7,s_7)}{\longrightarrow} S_7 \rightarrow 0
$$
where $S_7$ has length $1$ and $(x_7,s_7)$ is general (with respect to the choice of $F$) in $\pp (E)$.
Then $F'$ has no nontrivial co-obstructions:
${\rm Hom}(F',F'(1))=0$.
\end{lemma}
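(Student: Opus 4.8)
The plan is to transport the question from $F'$ to its double dual $E=(F')^{\ast\ast}\in M(4)$, where the co-obstructions are completely described by Lemma \ref{m4obs}, Corollary \ref{forusual} and Proposition \ref{usual}, and then to show that the seventh, general quotient point destroys the one remaining degree of freedom. Throughout, ``co-obstruction'' means a trace-free map $\phi:F'\to F'(1)$, and it suffices to prove there is no nonzero such $\phi$, since then $F'$ is a smooth point of the (good) moduli space $\overline{M}(11)$.

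First I would carry out the reduction to $E$. Write the usual point $F$ as $\ker(E\stackrel{\sigma}{\to}S)$ with $\sigma=\bigoplus_{i=1}^{6}s_{x_i}$ a sum of rank one quotients over distinct points $x_i$, and let $\varphi:E\to E(1)$ be the non-nilpotent co-obstruction, unique up to a scalar by Proposition \ref{usual}, with $(x_i,s_{x_i})\in{\rm Sp}_E(\varphi)$ for all $i$. Since $F$ is general in $Y$, the associated pair $(E,\varphi)$ is general among pairs (it is a general point of the $7$-dimensional space considered in the proof of Proposition \ref{usual}), so Corollary \ref{forusual} applies and ${\rm Sp}_E(\varphi)$ has dimension $2$, a proper closed subvariety of the threefold $\pp(E)$. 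Because $(x_7,s_7)$ is general, $x_7$ is disjoint from $x_1,\dots,x_6$, so $F'=\ker\bigl(E\to\bigoplus_{i=1}^{7}S_i\bigr)$ with $(F')^{\ast\ast}=E$. As recalled in the discussion around Lemma \ref{m4obs}, a co-obstruction $\phi':F'\to F'(1)$ is the same datum as a co-obstruction $\varphi':E\to E(1)$ compatible with this length $7$ quotient; and since the quotient is a sum of rank one quotients over distinct points, compatibility is equivalent, by Corollary \ref{forusual}, to $(x_i,s_i)\in{\rm Sp}_E(\varphi')$ for $i=1,\dots,7$. So it suffices to show that no nonzero $\varphi'$ in the space $W_E:={\rm Hom}^o(E,E(1))$ of co-obstructions of $E$ has all seven points on its internal spectral variety.

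Now the dimension count finishes the argument. The space $W_E$ has dimension $6$ (Proposition \ref{leq9} and the row $c_2=4$ of Table \ref{leq9table}, together with Serre duality ${\rm obs}(E)^{\ast}=W_E$). Since $F$ is general in $Y$, Lemma \ref{general5} lets me reorder the six points so that $(x_1,s_1),\dots,(x_5,s_5)$ is a general point of $\pp(E)^{5}$; by the linear independence statement used at the end of the proof of Proposition \ref{usual}, these five general points impose five linearly independent conditions on $W_E$. Hence the subspace of $\varphi'\in W_E$ with $(x_i,s_i)\in{\rm Sp}_E(\varphi')$ for $i=1,\dots,5$ has dimension $1$, and as it contains $\varphi$ it equals $\cc\,\varphi$. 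Therefore every co-obstruction of $F'$ is a scalar multiple of $\varphi$ and must moreover satisfy $(x_7,s_7)\in{\rm Sp}_E(\varphi)$. But ${\rm Sp}_E(\varphi)$ is a proper closed subvariety of $\pp(E)$ depending only on $F$, and $(x_7,s_7)$ is general with respect to $F$, so it cannot lie on ${\rm Sp}_E(\varphi)$. Thus $\varphi'=0$, i.e.\ $F'$ carries no nonzero co-obstruction, which is the claimed vanishing.

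The main obstacle is the linear independence input: that five general points of $\pp(E)$ impose five independent conditions on the $6$-dimensional $W_E$, equivalently that the evaluation map from $W_E$ to the sum of the five rank one ``invariant line'' quotients over the points has maximal rank. I would argue this as in the proof of Proposition \ref{usual}: were the rank smaller, the common zero locus in $W_E$ of the conditions at five general points would have dimension $\geq 2$, and then a sixth general point could not cut it down to the one-dimensional space of co-obstructions that actually occurs for a usual triple, contradicting the uniqueness up to scalar of $\varphi$ established there. The only bookkeeping to watch is that ``$F$ general in $Y$'' propagates, through Lemma \ref{general5}, to ``five of the quotients are general in $\pp(E)^{5}$'', and that $(x_7,s_7)$ remains general relative to the resulting pair $(E,\varphi)$; both are built into the hypotheses of the statement and of Lemma \ref{general5}.
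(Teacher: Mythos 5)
Your proof is correct and follows essentially the same route as the paper: reduce co-obstructions of $F'$ to co-obstructions of $E$ compatible with the seven rank-one quotients, invoke Lemma \ref{general5} to make five of the original six points general in $\pp(E)^5$, and then use the generality of $(x_7,s_7)$ to impose six independent conditions on the six-dimensional space ${\rm Hom}^o(E,E(1))$. The paper phrases the last step as a single "six general points impose vanishing" assertion while you split it into "five general points cut down to $\cc\varphi$" plus "$(x_7,s_7)\notin{\rm Sp}_E(\varphi)$", but this is the same argument.
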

\begin{proof}
The space of co-obstructions for the bundle $E$ has dimension $6$. Imposing a condition of compatibility with
a general rank-$1$ quotient $(x_i,s_i)$ cuts down the dimension of the space of co-obstructions by at least $1$.

By Lemma \ref{general5} above, we may assume after reordering
that the first five points $(x_1,s_1),\ldots , (x_5,s_5)$ constitute
a general vector in $\pp (E)^5$. Adding the $7$th general point given by the statement of the proposition, we obtain a general
point $(x_1,s_1),\ldots , (x_5,s_5),(x_7,s_7)$ in $\pp (E)^6$. As this $6$-tuple of points is general with respect to $E$, it imposes
vanishing on the $6$-dimensional space of co-obstructions, giving  ${\rm Hom}(F',F'(1))=0$.
\end{proof}

\begin{corollary}
There exists a point
$$
F'\in \overline{M(11,10)} \cap M(11,4)
$$
in the boundary of $\overline{M} (11)$, such that $F$ is a smooth point of $\overline{M} (11)$.
\end{corollary}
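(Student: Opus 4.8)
The plan is to build $F'$ explicitly, as the kernel of a general length-one quotient applied to a general usual point of $M(10,4)^{\rm sing}$, and then to read off the two required properties from results already in hand. Let $Y\subset M(10,4)$ be the unique $19$-dimensional irreducible component of $M(10,4)^{\rm sing}$ from Proposition \ref{usual}. First I would note that a general point of $Y$ parametrizes a sheaf $F$ which is at once a usual point of $M(10,4)^{\rm sing}$ (the usual triples being dense open in $Y$ by Proposition \ref{usual}) and a point of $\overline{M(10)}$: indeed $M(10,4)\cap\overline{M(10)}$ is nonempty by Theorem \ref{tf10} and coincides with $Y$ by Corollary \ref{meetsingood}. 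Fix such an $F$, set $E:=F^{\ast\ast}\in M(4)$, choose a general rank one quotient $(x_7,s_7)\in\pp(E)$, and define $F':=\ker\big(F\stackrel{(x_7,s_7)}{\longrightarrow}S_7\big)$ with $S_7$ of length $1$. Then $(F')^{\ast\ast}=E$ and $F'$ has colength $7$ in $E$, so $c_2(F')=11$ and $F'\in M(11,4)$.

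Next I would verify the two assertions. Membership $F'\in\overline{M(11,10)}\cap M(11,4)$ — so that $F'$ lies in the boundary $\partial\overline{M}(11)$ — is precisely the content of the Lemma at the start of this section (nonemptiness of $M(11,4)\cap\overline{M(11,10)}$), whose hypotheses ($F$ a usual point of $M(10,4)^{\rm sing}$, together with a general rank one quotient) are met by our choices. For smoothness, Lemma \ref{seventh} gives ${\rm Hom}(F',F'(1))=0$, so there is no nonzero trace-free co-obstruction $\phi:F'\to F'(1)$ and the obstruction space ${\rm obs}(F')={\rm Hom}^o(F',F'(1))^{\ast}$ vanishes. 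Since $\overline{M}(11)$ is good, it is locally a complete intersection by Lemma \ref{lci}, and the Kuranishi germ of $\overline{M}(11)$ at $F'$ is $\Phi^{-1}(0)$ for $\Phi:(\cc^a,0)\to(\cc^b,0)$ with $b={\rm dim}\,{\rm obs}(F')=0$; hence the germ is smooth and $F'$ is a smooth point of $\overline{M}(11)$.

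The one subtle point, and the main thing to be careful about, is compatibility of the genericity hypotheses: Lemma \ref{seventh} wants $F$ general in $Y$ and $(x_7,s_7)$ general relative to $F$, while the Lemma at the start of the section wants $F$ a usual point and the extra quotient general. I would resolve this by taking the pair $(F,(x_7,s_7))$ to be a general point of the incidence variety parametrizing a point of $Y$ together with a rank one quotient of its double dual; each genericity requirement cuts out a dense open subset there, so their common refinement is nonempty and any point of it does the job. The remaining verifications — the Chern-class bookkeeping $c_2(F')=11$ and $(F')^{\ast\ast}=E$, and the description of ${\rm obs}(F')$ as the dual of the space of trace-free co-obstructions, valid for torsion-free sheaves as recalled at the start of Section \ref{lowest} — are routine.
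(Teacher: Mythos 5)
Your proposal is correct and follows essentially the same route as the paper: start from a general usual point $F$ of the $19$-dimensional component $Y$ of $M(10,4)^{\rm sing}$ (which lies in $M(10,4)\cap\overline{M(10)}$ by Theorem \ref{tf10}), take the kernel of a general rank-one quotient to land in $\overline{M(11,10)}\cap M(11,4)$ by the first lemma of the section, and invoke Lemma \ref{seventh} to kill the co-obstructions and conclude smoothness via goodness of $\overline{M}(11)$. Your extra care about making the two genericity requirements compatible on an incidence variety is a sensible elaboration of what the paper leaves implicit.
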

\begin{proof}
By Lemma \ref{seventh}, choosing a general quotient $(x_7,s_7)$ gives a torsion-free sheaf
$F'$ with no co-obstructions, hence corresponding to a smooth point of $\overline{M} (11)$.
By construction we have $F'\in \overline{M(11,10)} \cap M(11,4)$.
\end{proof}

\begin{theorem}
\label{irred11}
The moduli space $\overline{M} (11)$ is irreducible.
\end{theorem}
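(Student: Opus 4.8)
The plan is to run the same O'Grady-style argument that was used for $c_2\geq 12$, but accounting for the fact that the boundary of $\overline{M}(11)$ has \emph{two} irreducible components of codimension one, namely $M(11,10)$ and $M(11,4)$, rather than just one. First I would recall the setup from the excerpt: since $\overline{M}(11)$ is good of dimension $24$ (Proposition \ref{singularlocus} and the discussion around Table \ref{maintable}), it is a local complete intersection by Lemma \ref{lci}, and its irreducible components coincide with those of $M(11)$. Let $Z$ be an irreducible component. By Corollary \ref{meetsboundary}, $\overline{Z}$ meets the boundary $\partial\overline{M}(11)$ in a nonempty subset of pure codimension one, i.e. of dimension $23$. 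Consulting the row $c_2=11$ of Table \ref{maintable}, the only boundary strata of dimension $23$ are $M(11,10)$ (fibered over the irreducible $M(10)$, irreducible by Theorem \ref{irred10} and Corollary \ref{forward}) and $M(11,4)$ (the lowest stratum, hence closed, irreducible by Li's Theorem \ref{quotli} and irreducibility of $M(4)$ from Theorem \ref{descrip45}); all other strata have dimension $\leq 22$. So $\partial\overline{Z}$ is a union of (some of) $\overline{M(11,10)}$ and $M(11,4)$.

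Next I would argue, exactly as in the $c_2\geq 12$ case in the Introduction, that a generic point of $M(11,10)$ is a \emph{smooth} point of $\overline{M}(11)$: such a sheaf is $F' = \ker(E' \to S_x)$ with $E'$ general in $M(10)$ and $S_x$ of length one, any co-obstruction $F'\to F'(1)$ would descend from a co-obstruction $E'\to E'(1)$, and $E'$ is a smooth point since $M(10)$ is good. Hence $M(11,10)$ lies in at most one irreducible component of $\overline{M}(11)$, and since $M(11,10)$ is irreducible there is a unique component, call it $Z_0$, whose boundary contains $\overline{M(11,10)}$. The remaining task is to show that any component $Z$ whose boundary contains (a codimension-one piece of) $M(11,4)$ must coincide with $Z_0$. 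For this I would invoke the Corollary just proved in the excerpt: there exists a point $F'\in \overline{M(11,10)}\cap M(11,4)$ which is a \emph{smooth} point of $\overline{M}(11)$ (obtained from a usual point $F\in M(10,4)^{\rm sing}$ by a general further rank-one quotient, using Lemma \ref{seventh}). A smooth point of $\overline{M}(11)$ lies on a unique irreducible component; since $F'$ lies simultaneously in $\overline{M(11,10)}\subset Z_0$ and in $M(11,4)$, any component containing a piece of $M(11,4)$ that contains $F'$ must equal $Z_0$.

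There is one gap to close: the smooth point $F'$ we produced lies in one particular $19$-dimensional piece of $M(10,4)^{\rm sing}$ pushed up to $M(11,4)$, whereas $\partial\overline{Z}\cap M(11,4)$ could a priori be a different codimension-one subvariety of the $20$-dimensional irreducible stratum $M(11,4)$. To handle this I would note that the intersection $\overline{Z}\cap M(11,4)$, being a codimension-one subset of the good l.c.i. $\overline{M}(11)$ along which two components would meet, is contained in the singular locus $\mathrm{Sing}(\overline{M}(11))\cap M(11,4)$; and one shows, by the same Ellingsrud–Lehn dimension count as in Proposition \ref{usual} (now applied with the base $M(4)$, a length-$7$ quotient, the $7$-dimensional moduli $M'(4)$ of pairs $(E,\varphi)$, and its two-dimensional internal spectral varieties), that this singular locus has a \emph{unique} $23$-dimensional irreducible component, namely the one swept out by $F'$ as in the Lemma. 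Therefore every component $Z$ meeting $M(11,4)$ in codimension one meets it along this same irreducible subvariety, which contains the smooth point $F'$, forcing $Z=Z_0$. Combining the two cases, every irreducible component of $M(11)$ equals $Z_0$, so $M(11)$, and hence $\overline{M}(11)$, is irreducible.

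The main obstacle, I expect, is precisely this last point: ensuring that $\partial\overline{Z}\cap M(11,4)$ is forced to be (a dense subset of) the \emph{same} $23$-dimensional subvariety for every component $Z$, so that the single smooth point $F'$ suffices to glue all components together. This requires the sharper structural information about $M(10,4)^{\rm sing}$ from Proposition \ref{usual} — in particular that there is only one $19$-dimensional component of usual triples — propagated up one level via Li's theorem and the Ellingsrud–Lehn stratification count, together with the genericity in Lemma \ref{general5} and Lemma \ref{seventh} guaranteeing that the generic point of that subvariety is smooth in $\overline{M}(11)$. Once that uniqueness is in hand, the rest is the routine O'Grady/Nijsse induction, and the cases $c_2\geq 12$ then follow by the argument already given in the Introduction, completing the proof of Theorem \ref{maintheorem}.
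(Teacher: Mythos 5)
Your first two paragraphs reproduce the paper's argument faithfully: reduce to the two $23$-dimensional boundary strata $M(11,10)$ and $M(11,4)$, observe that their general points are smooth points of $\overline{M}(11)$ so each lies in at most one irreducible component, and use the smooth point $F'\in\overline{M(11,10)}\cap M(11,4)$ to force those two components to coincide. The problem is your third paragraph, which manufactures a gap that is not there and then ``closes'' it with claims that are false. You describe $M(11,4)$ as the ``$20$-dimensional irreducible stratum''; in fact $\dim M(11,4)=\dim M(4)+3\cdot 7=23$ (Table \ref{maintable}, row $c_2=11$, $d=7$) --- you have confused it with $M(10,4)$, which is the $20$-dimensional stratum one level down. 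Once the dimension is correct, the worry evaporates: $\partial\overline{Z}$ has pure dimension $23$ by Lemma \ref{codim1}, and any irreducible component of it whose generic point lies in the closed irreducible $23$-dimensional set $M(11,4)$ must be \emph{all} of $M(11,4)$, which certainly contains $F'$. There is no ``different codimension-one subvariety of $M(11,4)$'' to worry about; that phenomenon occurs only at $c_2=10$, where $M(10,4)$ has the same dimension as the moduli space and the boundary piece is a proper $19$-dimensional subvariety of it, which is exactly why Proposition \ref{usual} is needed there and not here.

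Moreover, the patch you propose is incorrect as stated. You assert that $\mathrm{Sing}(\overline{M}(11))\cap M(11,4)$ has a unique $23$-dimensional irreducible component swept out by the points $F'$. It has no $23$-dimensional component at all: a general point of $M(11,4)$ is the kernel of $7$ general rank-one quotients of a general $E\in M(4)$, and already $6$ general points of $\pp(E)$ impose independent conditions killing the $6$-dimensional space of co-obstructions of $E$ (this is the content of case (ii) of Proposition \ref{usual} and of Lemma \ref{seventh}); hence the general point of $M(11,4)$ is a \emph{smooth} point of $\overline{M}(11)$ and the singular locus meets $M(11,4)$ in dimension at most $22$. This smoothness is precisely what the paper uses --- it is the reason $M(11,4)$ can lie in at most one irreducible component --- and your proposed argument, which would place a dense subset of $\overline{Z}\cap M(11,4)$ inside the singular locus, is incompatible with it. If you delete the third paragraph and instead note (a) that $\partial\overline{Z}$, being pure of dimension $23$, must contain $\overline{M(11,10)}$ or $M(11,4)$ in their entirety, and (b) that general points of both are unobstructed, your proof becomes exactly the paper's.
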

\begin{proof}
Suppose $Z$ is an irreducible component. Then $Z$ meets the boundary in a codimension $1$ subset;
but by looking at Table \ref{maintable}, there are only two possibilities:
$\overline{M(11,10)}$ and $M(11,4)$. The
co-obstructions vanish for general points of $M(10,4)$ since those correspond to $6$ general
quotients of rank $1$, and the co-obstructions vanish for general points of $M(10)$ by goodness. It follows that there
are no co-obstructions at general points of $\overline{M(11,10)}$ or $M(11,4)$, so each of these
is contained in at most a single irreducible component of $\overline{M} (11)$.
However, in the previous corollary, there is a unique irreducible component
containing $F'$, which shows that the irreducible components containing $\overline{M(11,10)}$ and $M(11,4)$
must be the same. Hence, $\overline{M}(11)$ has only one irreducible component.
\end{proof}

{\em Remark:} Sarbeswar Pal has pointed out to us
a simplified proof for $c_2\geq 11$, avoiding
the use of Lemma \ref{intnonempty}. He observes from the connectedness property and
goodness of the moduli of torsion-free sheaves, that any change
of irreducible component must occur along a codimension $1$ piece of the singular locus. However,
general points of the boundary
components are smooth points of the full moduli space, by an easier version of the previous discussion,
so we can conclude that the singular locus has codimension $\geq 2$. We have nonetheless presented our original proof since it
gives some additional geometrical information on the intersection of the two boundary strata. 

The cases $c_2\geq 12$ are now easy to treat.

\begin{theorem}
\label{irred12}
For any $c_2\geq 12$, the moduli space $\overline{M} (c_2)$ of stable torsion-free sheaves
of degree $1$ and second Chern class $c_2$, is irreducible.
\end{theorem}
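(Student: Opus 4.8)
The plan is to proceed by induction on $c_2$, following exactly the strategy outlined in the introduction and used by Nijsse for $c_2\geq 16$. The base of the induction is $c_2=11$: by Theorem \ref{irred11} the space $\overline{M}(11)$ is irreducible, hence so is its dense open subset $M(11)$ of bundles. For the inductive step, fix $c_2\geq 12$ and assume that the bundle moduli space $M(c_2-1)$ is irreducible; since $c_2-1\geq 10$, Proposition \ref{singularlocus} moreover tells us that $M(c_2-1)$ is good.

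First I would read off from Table \ref{maintable} two facts. Every boundary stratum $M(c_2,c_2')$ has dimension strictly less than $4c_2-20$, so (as already noted after the table) the irreducible components of $\overline{M}(c_2)$ coincide with those of $M(c_2)$; and, more precisely, among the boundary strata only $M(c_2,c_2-1)$ has dimension $4c_2-21$, all the others having dimension $\leq 4c_2-22$. By the induction hypothesis and Corollary \ref{forward}, the stratum $M(c_2,c_2-1)$ is irreducible of dimension exactly $4c_2-21$, so its closure $\overline{M(c_2,c_2-1)}$ is an irreducible subvariety of $\overline{M}(c_2)$ of that dimension. Now let $Z\subset\overline{M}(c_2)$ be an irreducible component. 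By Corollary \ref{meetsboundary}, the intersection $\partial Z:=Z\cap\partial\overline{M}(c_2)$ is nonempty and of pure codimension $1$ in $Z$, hence of dimension $4c_2-21$. Comparing with the dimensions just listed, any codimension-$1$ component of $\partial Z$ must be contained in $\overline{M(c_2,c_2-1)}$, and since the latter is irreducible of the same dimension $4c_2-21$, it must coincide with it. Therefore $\overline{M(c_2,c_2-1)}\subseteq \overline{Z}=Z$; that is, every irreducible component of $\overline{M}(c_2)$ contains $\overline{M(c_2,c_2-1)}$.

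To conclude, I would show that a general point $F$ of $M(c_2,c_2-1)$ is a smooth point of $\overline{M}(c_2)$. Such an $F$ is the kernel of a general surjection $E\rightarrow S$ with $E$ general in $M(c_2-1)$, $S$ of length $1$, and $E=F^{\ast\ast}$. A nonzero co-obstruction $\phi:F\rightarrow F(1)$ would, as explained in the introduction (compare \cite{Langer, MestranoSimpson1, Zuo}), extend to a nonzero co-obstruction $E\rightarrow E(1)$: the map ${\rm Hom}(E,E(1))\rightarrow {\rm Hom}(F,E(1))$ is surjective because ${\rm Ext}^1(S,E(1))=0$ for $S$ of length $1$ and $E$ locally free; the extension still has vanishing trace since it agrees with $\phi$ off a point; and it carries $F$ into $F(1)$ by construction, so it is compatible with the quotient $E\rightarrow S$. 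But $E$, being general in the good moduli space $M(c_2-1)$, is a smooth point and hence has no nonzero co-obstruction, so $\phi=0$. Thus ${\rm obs}(F)=0$; since $\overline{M}(c_2)$ is good and therefore a local complete intersection by Lemma \ref{lci}, vanishing of the obstruction space forces $F$ to be a smooth point of $\overline{M}(c_2)$. Now $\overline{M(c_2,c_2-1)}$ is irreducible with a dense open subset of smooth points of $\overline{M}(c_2)$, and it lies in every irreducible component of $\overline{M}(c_2)$; since a smooth point lies on a unique component, there is only one. This completes the induction, and it shows in passing that $\overline{M}(c_2)$ is generically smooth of the expected dimension $4c_2-20$.

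The argument is essentially formal once the earlier results are in hand; it is the same induction that Nijsse ran for $c_2\geq 16$, merely started one step earlier. The only point that deserves any care is the co-obstruction descent, namely that a co-obstruction on $F$ genuinely produces a nonzero trace-free one on $E=F^{\ast\ast}$, and of course the whole induction rests on having proved the case $c_2=11$ first (Theorem \ref{irred11}), which in turn rested on the analysis of $\overline{M}(10)$.
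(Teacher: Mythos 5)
Your proof is correct and follows essentially the same route as the paper's: induction starting from the base case $c_2=11$, identification of $M(c_2,c_2-1)$ as the unique codimension-one boundary stratum via Corollary \ref{meetsboundary} and Table \ref{maintable}, and smoothness of $\overline{M}(c_2)$ at a general point of that stratum because a co-obstruction on $F$ would lift to one on the general (hence unobstructed) bundle $E=F^{\ast\ast}$. The extra detail you supply on the lifting step (surjectivity of ${\rm Hom}(E,E(1))\rightarrow{\rm Hom}(F,E(1))$ from ${\rm Ext}^1(S,E(1))=0$) merely fills in what the paper sketches in its introduction.
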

\begin{proof}
By Corollary \ref{meetsboundary}, any irreducible component of $\overline{M} (c_2)$
meets the boundary in a subset of codimension $1$. However, for $c_2\geq 12$,
the only stratum of codimension $1$ is $M(c_2,c_2-1)$. By induction on $c_2$,
starting at $c_2=11$, we may assume that $M(c_2,c_2-1)$ is irreducible.
Furthermore, if $E$ is a general point of $M(c_2-1)$ then $E$
admits no co-obstructions, since $M(c_2-1)$ is good. Hence, a general point
$F$ in $M(c_2,c_2-1)$, which is the kernel of a general length-$1$ quotient
$E\rightarrow S$, doesn't admit any co-obstructions either. Therefore,
$\overline{M} (c_2)$ is smooth at a general point of $M(c_2,c_2-1)$. Thus,
there is a unique irreducible component containing $M(c_2,c_2-1)$, which
completes the proof that $\overline{M} (c_2)$ is irreducible.
\end{proof}

We have finished proving our main statement, Theorem \ref{maintheorem} of the introduction:
\newline
{\em for any $c_2\geq 4$, the moduli space $M(c_2)$ of stable vector bundles
of degree $1$ and second Chern class $c_2$ on a very general quintic
hypersurface $X\subset \pp ^3$, is nonempty and irreducible.}

For $4\leq c_2\leq 9$, this is shown in \cite{MS1}.
For $c_2=10$ it is Theorem \ref{irred10}, for
$c_2=11$ it is Theorem \ref{irred11}, and
$c_2\geq 12$ it is Theorem \ref{irred12}. Note that for $c_2\geq 16$
it is Nijsse's theorem \cite{Nijsse}.

It was shown in \cite{MS1} that the moduli space is good for $c_2\geq 10$
(shown by Nijsse for $c_2\geq 13$), and from Table \ref{leq9table} we
see that it isn't good for $4\leq c_2\leq 9$. The  moduli space of
torsion-free sheaves $\overline{M} (c_2)$ is irreducible for $c_2\geq 11$, as may be seen by
looking at the dimensions of boundary strata in Table \ref{maintable}.
Whereas $M(4)=\overline{M}(4)$ is irreducible, the dimensions of the strata in Table \ref{maintable}
imply that $\overline{M}(c_2)$ has several irreducible components for $5\leq c_2\leq 9$, although we haven't answered
the question as to their precise number.
By Theorem \ref{tf10}, $\overline{M} (10)$ has two irreducible components $\overline{M(10)}$ and $M(10,4)$.

\section{An irregularity estimate for \cite{MS1}}
\label{correx}

In this section we provide a correction and improvement to \cite[Lemma 5.1]{MS1} and hence
\cite[Corollary 5.1]{MS1}. There was an error in the proof given in \cite{MS1}.

\begin{lemma}
\label{51bis}
Suppose $X$ is a very general quintic hypersurface in $\pp^3$.
Suppose $s\in H^0(\Oo _X(2))$ is a section which is not the
square of a section of $\Oo _X(1)$. It defines an irreducible spectral covering $Z\subset {\rm Tot}(K_X)$ consisting of
square-roots of $s$. Let $\tilde{Z}$ be a resolution of singularities of $Z$. Then the irregularity of $\tilde{Z}$ is zero,
that is to say $H^0(\tilde{Z},\Omega ^1_{\tilde{Z}})=0$. Hence the dimension of ${\rm Pic}^0({\tilde{Z}})$ is zero.
\end{lemma}
\begin{proof}
The divisor $D$ of zeros of $s$ is reduced since $s$ isn't a square and
in view of the fact that $\Oo _X(1)$ generates ${\rm Pic}(X)$.
Therefore the map $Z\rightarrow X$ is ramified with simple ramification along the smooth points of $D$.
The involution of multiplication by $-1$ acts in the fibers. Choose an equivariant resolution of singularities
${\tilde{Z}}\rightarrow Z$ with an involution $\sigma: {\tilde{Z}}\rightarrow {\tilde{Z}}$ covering the given involution of $Z$.
The irregularity of ${\tilde{Z}}$ is independent of the choice of resolution, so we would like to
show that $H^0(\tilde{Z},\Omega ^1_{\tilde{Z}})=0$.

The map $p:\tilde{Z}\rightarrow X$ induces an exact sequence
$$
0\rightarrow \Oo _X \rightarrow p_{\ast}(\Oo _{\tilde{Z}})\rightarrow Q\rightarrow 0,
$$
with $Q$ a rank $1$ torsion-free sheaf on $X$. The double dual $Q^{\ast\ast}$ is a line bundle $L$.
Using the involution $\sigma$, the above exact sequence splits: $Q$ is the anti-invariant part.
Multiplying together sections of $Q$ gives a map
$$
Q\otimes Q\rightarrow \Oo _X,
$$
which extends by Hartogs to a map
$$
L\otimes L\rightarrow \Oo _X.
$$
Look locally near a smooth point of
$D$ where $X$ has coordinates $(x,y)$ such that $D$ is given by $y=0$, and ${\tilde{Z}}$
has coordinates $(x,z)$ with $y=z^2$.
As a $\cc \{ x,y\}$-module, $Q$ or equivalently $L$ is generated by $z$. The image of the multiplication map is
therefore the submodule generated by $z^2=y$. It is an isomorphism outside of $D$, and to get an isomorphism
it suffices to look off of codimension $2$. This shows that
$$
L\otimes L\stackrel{\cong}{\rightarrow} \Oo _X(-D) \cong \Oo _X(-2)
$$
hence $L\cong \Oo _X(-1)$. It means that $L$ is generated by the linear functions along the fibers of $K_X\rightarrow X$,
restricted back to ${\tilde{Z}}$.

Consider similarly the decomposition into invariant and anti-invariant pieces
$$
p_{\ast} (\Omega ^1_{\tilde{Z}}) = \Ff ^+ \oplus \Ff ^-.
$$
These sheaves are torsion-free, and we have a map $\Omega ^1_X\rightarrow \Ff ^+$.
Again with the local coordinates $x,y$ for $X$ and $x,z$ for
$\tilde{Z}$ near a smooth point of $D$ as above, we have that
$\Omega ^1_{\tilde{Z}}$ is generated by $dx$ and $dz$. As a module over $\cc \{ x,y\}$, $\Ff ^+$ is generated
by $dx$ and $zdz$ or equivalently $dx$ and $dy$. This shows that the map $\Omega ^1_X\rightarrow \Ff ^+$
is an isomorphism on smooth points of $D$. Since $\Ff ^+$ is torsion-free and $\Omega ^1_X$ locally free,
it follows that this map is an isomorphism. We may therefore write
$$
p_{\ast} (\Omega ^1_{\tilde{Z}}) = \Omega ^1_X \oplus \Ff ^-.
$$
Consider now the map $\Omega ^1_X\otimes Q\rightarrow \Ff ^-$. Let $\Gg := (\Ff ^-)^{\ast \ast}$ be the
double dual, and the previous map induces a map
$$
\Omega ^1_X\otimes L\rightarrow \Gg.
$$
Consider again the situation at a smooth point of $D$ using local coordinates. Note that $\Gg$ is generated by
$zdx$ and $dz$, whereas $\Omega ^1_X\otimes L$ is generated by $zdx$ and $zdy=z^2dz = ydz$. Recalling that $L=\Oo _X(-1)$,
we get an exact
sequence
$$
0\rightarrow \Omega ^1_X(-1) \rightarrow \Gg \rightarrow \Bb \rightarrow 0
$$
where $\Bb$ is a sheaf supported on $D$, locally near the smooth points being isomorphic to $\Oo _D$.
This says that $\Gg$ and $\Omega ^1_X(-1)$ are related by an elementary transformation. In particular, we get
$$
0\rightarrow \Gg \rightarrow \Omega ^1_X(-1)(D) = \Omega ^1_X(1) .
$$

The irregularity of $X$ vanishes so $H^0(\Omega ^1_X) =0$. Hence,
$$
H^0(\tilde{Z},\Omega ^1_{\tilde{Z}}) \cong H^0(X, p_{\ast} \Omega ^1_{\tilde{Z}}) \stackrel{\cong}{\rightarrow} H^0(X, \Ff ^-) \hookrightarrow H^0(X, \Gg )
\hookrightarrow H^0(X, \Omega ^1_X(1)).
$$
We have finally shown that there is an injection
$$
H^0({\tilde{Z}}, \Omega ^1_{\tilde{Z}}) \hookrightarrow H^0(X, \Omega ^1_X(1)).
$$
One may show\footnote{For convenience,
here is the argument. The canonical exact sequence
$$
0 \rightarrow \Omega ^1_{\pp^3} \rightarrow \Oo _{\pp^3}(-1)^4 \rightarrow \Oo _{\pp^3} \rightarrow 0
$$
gives rise to
$$
0 \rightarrow H^0(\Omega ^1_{\pp^3}(1)) \rightarrow H^0(\Oo _{\pp^3}^4) \rightarrow H^0(\Oo _{\pp^3}(1))
$$
in which the right map is an isomorphism, so $H^0(\Omega ^1_{\pp^3}(1)) =0$.
We also get $H^1(\pp^3, \Omega ^1_{\pp^3}(-4))=0$, thus the exact sequence
$$
0\rightarrow \Omega ^1_{\pp^3}(-4)\rightarrow \Omega ^1_{\pp^3}(1) \rightarrow \Omega ^1_{\pp^3}(1) |_X
\rightarrow 0
$$
implies $H^0(\Omega ^1_{\pp^3}(1) |_X)=0$.
Now using $H^1(\Oo _X(n))=0$, the exact sequence
$$
0\rightarrow N^{\ast}_{X/\pp^3} (1)= \Oo _X(-4) \rightarrow \Omega ^1_{\pp^3}(1) |_X \rightarrow \Omega ^1_X(1) \rightarrow 0
$$
gives  $H^0(\Omega ^1_X(1))=0$.}
that the right hand space of sections vanishes.
This completes the proof of the lemma.
\end{proof}

Therefore Corollary 5.1 of \cite{MS1} holds, with the improved bound that the dimension is $\leq 9$.
Along the way we have answered \cite[Question 5.1]{MS1}:  in the notation from there, $A=0$.

\section{Example on a degree 6 hypersurface}
\label{example6}

In this section we shall start in the direction of considering hypersurfaces of higher degree,
and consider briefly the case of hypersurfaces of degree $6$. In particular, the notation
differs from that in effect previously.

Here, $X\subset \pp^3$ is a very general hypersurface of degree $6$, which will be denoted $X=X^6$ in the statements
of the main corollaries, for precision. We have $K_X = \Oo_X(2)$. We consider stable rank $2$ vector bundles
$E$ of degree $1$ and more precisely with ${\rm det}(E)=\Oo _X(1)$, and some specified value of $c_2$.

Assume $h^0(E)>0$. Then there is a section, corresponding to a morphism $s:\Oo _X\rightarrow E$. The zeros of $s$ are in codimension $2$,
otherwise it would extend to $\Oo _X(1)\rightarrow E$ contradicting stability. Therefore, $s$ fits into an exact sequence of the
usual form
\begin{equation}
\label{theext}
0\rightarrow \Oo _X\rightarrow E \rightarrow J_{P/X}(1)\rightarrow 0,
\end{equation}
where $P\subset X$ is a local complete subscheme of dimension $0$. By the general theory, $P$ satisfies the condition $CB(L^{-1} \otimes M\otimes K_X)$
where $L=\Oo _X$, and $M=\Oo _X(1)$. In other words, $P$ is a $CB(3)$ subscheme.

Notice that $c_2(\Oo _X\oplus \Oo _X(1))=0$ by the product formula for Chern polynomials; therefore in the above extension, we have $c_2(E)= |P|$.

In our examples, we will consider the case $c_2=11$, and give two different kinds of $11$-point $CB(3)$ subschemes.

Before getting to these, let us note some general things about the deformation theory. Our bundle satisfies $E^{\ast}=E(-1)$, so
$$
{\rm End}(E)=E^{\ast}\otimes E \cong E\otimes E(-1).
$$
The decomposition ${\rm End}(E)={\rm End}^0(E)\oplus \Oo _X$ into the trace-free plus the central part, corresponds to the
decomposition
$$
E\otimes E(-1)= {\rm Sym}^2(E)(-1)\oplus \bigwedge ^2(E)(-1).
$$
Let us denote for short $V:= {\rm Sym}^2(E)(-1)$.
The deformation theory of $E$ as a bundle with fixed determinant is governed by $H^{\ast}(V)$. Notice that if $E$ is stable, it has no endomorphisms
except the scalars,
so $H^0(V)=0$. We may also apply Serre duality noting that $V$ is self-dual and recalling $K_X=\Oo _X(2)$.
The space of infinitesimal deformations is
$$
{\rm Def}(E)=H^1(V)\cong H^1(V(2))^{\ast}
$$
and the space of obstructions is
$$
{\rm Obs}(E)= H^2(V) \cong H^0(V(2))^{\ast}.
$$

Let $2P$ denote the subscheme defined by the square of the ideal of $P$, so $J_{2P/X}= (J_{P/X})^2$.  We have an exact sequence
\begin{equation}
\label{esV}
0\rightarrow E(-1)\rightarrow V \rightarrow J_{2P/X}(1)\rightarrow 0
\end{equation}
and hence
\begin{equation}
\label{esV2}
0\rightarrow E(1)\rightarrow V(2) \rightarrow J_{2P/X}(3)\rightarrow 0.
\end{equation}

\subsection{Points on the rational normal cubic}

The first case is when $C\subset \pp ^3$ is a general rational normal cubic, and $P\subset X\cap C$ is a collection of $11$ points.
This exists since $C\cap X$ consists of $18$ distinct points and we may choose $11$ of them.

Notice that $C\cong \pp^1$ and $\Oo _{\pp^3}(1)|_C=\Oo _C(3p)$ for any
point $p\in C$, that is to say it is a line bundle of degree $3$. Thus, $\Oo _{\pp^3}(3)|_C=\Oo _C(9p)$ has degree $9$.
If $P'\subset P$ is any collection of $10$ points, a section of $\Oo _{\pp^3}(3)$ vanishing on $P'$ must vanish on $C$, hence it must
vanish on $P$. The sections of $\Oo _X(3)$ are all restrictions of sections of $\Oo _{\pp^3}(3)$, so this proves that $P$ satisfies the property $CB(3)$.

The space of extensions of $J_{P/X}(1)$ by $\Oo _X$ is dual to $H^1(J_{P/X}(3))$, which in turn is the cokernel of
\begin{equation}
\label{resP}
H^0(\Oo _X(3))\rightarrow H_0(P,\Oo _P(3))\cong \cc ^{11} .
\end{equation}
As we have seen above, a section of $H^0(\Oo _X(3))$ vanishing on $P$ corresponds to a section of $H^0(\Oo _{\pp^3}(3))$ vanishing on $C$.
One may calculate by hand that the map
$$
\cc ^{20} = H^0(\Oo _{\pp^3}(3))\rightarrow H^0(\Oo _C(9p)) = \cc ^{10}
$$
is surjective. Indeed, the image of $H^0(\Oo _{\pp^3}(1))$ consists of the sections which may be written as $1,t,t^2, t^3$ for an affine coordinate $t$ on $C\cong \pp^1$
with pole at the point $p$. Then,  monomials of degree $3$ in these sections give all of the monomials $1,t,\ldots , t^9$.

From this surjectivity we get that the kernel is $\cc^{10}$. Thus, the kernel of the map \eqref{resP} is $\cc^{10}$ so the image of the map also has dimension
$10$. Finally, we get that the cokernel of \eqref{resP} has dimension $1$. We have shown that $Ext^1(J_{P/X}, \Oo _X)$ has dimension $1$.
Therefore, a given subscheme $P$ gives rise to only one bundle since
scaling of the extension class doesn't change the isomorphism class of the bundle.

For the other direction, we claim that $h^0(E)=1$. Consider the exact sequence
$$
0\rightarrow H^0(\Oo _X)\rightarrow H^0(E)\rightarrow H^0(J_{P/X}(1))\rightarrow H^1(\Oo _X)=0.
$$
Given a section of $H^0(\Oo _X(1))$ vanishing on $P$, it comes from a section of $H^0(\Oo _{\pp^3}(1))$ which, by the same argument as previously, vanishes on $C$.
If the section is nonzero, that would say that $C$ is contained in a plane, which however is not the case. Therefore,
$H^0(J_{P/X}(1))=0$ and $\cc \cong H^0(\Oo _X)\stackrel{\cong}{\rightarrow} H^0(E)$. We get $h^0(E)=1$ as claimed.

In particular, for a given bundle $E$, the choice of section $s$ is unique up to a scalar, so the subscheme $P$ is uniquely determined.

By these arguments, we conclude that the space of bundles $E$ in this case is isomorphic to the space of choices of subscheme $P\subset C\cap X$.

Now, given $P\subset C\cap X$ of length $11$, we claim that $C$ is the only rational normal curve passing through $P$. Indeed, suppose $C'$ were another one.
Note that $C'$ is cut out by conics. If $Q\subset \pp^3$ is a conic containing $C'$ then $Q\cap C$ is either equal to $C$, or has length $6$; the latter
case can't happen so $C\subset Q$. Thus, any conic containing $C'$ also contains $C$, which shows that $C=C'$.

The dimension of the space of subschemes $P$ in this case is therefore equal to the dimension of the space
$PGL(4)/PGL(2)$
of rational normal cubic curves,  which is $15-3=12$. This completes the proof of the following proposition:

\begin{proposition}
The space of bundles $E$ fitting into an exact sequence of the form \eqref{theext}, where $P$ is a length $11$ subscheme of $C\cap X$ for $C$ a rational
normal cubic in $\pp^3$, has dimension $12$.
\end{proposition}

\begin{lemma}
Suppose $E$ is a bundle fitting into an exact sequence of the form \eqref{theext}, where $P$ is a length $11$ subscheme of $C\cap X$ for $C$ a
general rational
normal cubic in $\pp^3$. Then $h^1({\rm End}^0(E))=h^1(V)=12$.
\end{lemma}
\begin{proof}
Use the exact sequence \eqref{esV}. The first step is to calculate
$h^1(E(-1))$. Note that \eqref{theext} gives the following sequence,
using that $h^1(\Oo _X(n))=0$ for any $n$
as well as $H^2(J_{P/X}(n))=H^2(\Oo _X(n))$:
$$
0\rightarrow H^1(E(-1))\rightarrow H^1(J_{P/X})\rightarrow H^2(\Oo _X(-1))\rightarrow
H^2(E(-1))\rightarrow H^2(\Oo _X)\rightarrow 0.
$$
Now $H^2(E(-1))$ is dual to $H^0(E(2))$ which itself fits into the sequence
$$
0\rightarrow H^0(\Oo _X(2))\rightarrow H^0(E(2))\rightarrow H^0(J_{P/X}(3))\rightarrow 0.
$$
We have $H^0(J_{P/X}(3))\cong H^0(J_{C/{\pp ^3}}(3))={\rm ker}(H^0(\Oo _{\pp^3}(3))\rightarrow H^0(\Oo _C(9p)))$. The latter map is
surjective from $\cc ^{20}$ to $\cc ^{10}$ so its kernel has dimension $10$. This gives
$h^0(J_{P/X}(3))=10$. Also $h^0(\Oo _X(2))= 10$ so $h^2(E(-1))=h^0(E(2))=20$. We have $h^2(\Oo _X)= h^0(\Oo _X(2))=10$ and
$h^2(\Oo _X(-1))= h^0(\Oo _X(3))=20$. Finally, $H^1(J_{P/X})$ is just $\cc ^{11}$ modulo $H^0(\Oo _X)=\cc$ so $h^1(J_{P/X})=10$.
The alternating sum from the above sequence vanishes, saying now that
$$
h^1(E(-1)) - 10 + 20 - 20 + 10 = 0,
$$
so $h^1(E(-1))=0$.

The long exact sequence associated to \eqref{esV} starting with $H^1(E(-1))=0$ now gives
$$
0\rightarrow H^1(V)\rightarrow H^1(J_{2P/X}(1))\rightarrow H^2(E(-1))\rightarrow H^2(V)\rightarrow H^2(\Oo _X(1))\rightarrow 0.
$$
As we have seen above, $h^2(E(-1))=20$. It is also easy to see that $h^0(J_{2P/X}(1))=0$ (we will in fact see this for $J_{2P/X}(3)$ below),
so noting that the length of $2P$ is $33$ we get $h^1(J_{2P/X}(1))= 33 -h^0(\Oo _X(1))=29$. Putting these together and using
$h^2(\Oo _X(1))=h^0(\Oo _X(1))=4$ we get
$$
h^1(V) - 29 + 20 - h^2(V) +4 = 0,
$$
so
$h^1(V)-h^2(V) = 5$. This is the expected dimension of the moduli space.

Next, by duality $h^2(V)=h^0(V(2))$ which we can calculate using the sequence \eqref{esV2}. We have
$$
0\rightarrow H^0(E(1))\rightarrow H^0(V(2))\rightarrow H^0(J_{2P/X}(3)) .
$$
We claim that $H^0(J_{2P/X}(3))=0$. To see this, consider a quadric surface $Q\subset \pp^3$ containing $C$. We have $Q\cong \pp^1\times \pp^1$ and
$C$ is a divisor of bidegree $(1,2)$ on $Q$. On the other hand, $\Oo _Q(1)$ has bidegree $(1,1)$. Suppose we have a section $u$ of $H^0(\Oo _X(3))=H^0(\Oo _{\pp^3}(3))$
vanishing on the  $2P$ (recall that $2P$ is the subscheme of $X$ defined by the square of the ideal of $P$).
We have seen already above that vanishing on $P$ implies that it vanishes on $C$. Therefore $u|_Q$ is a section of the
bundle of bidegree $(3,3)-(1,2)=(2,1)$. The intertsection of $2P$ with $Q$ consists of a collection of double points transverse to $C$ at the points of $P$,
so it imposes again a single condition on the section $u$ considered as a section of $\Oo _Q(2,1)$. The restriction of $\Oo _Q(2,1)$ to $C$
is a line bundle on $C\cong \pp^1$ of degree equal to the intersection number $(2,1).(1,2)=5$. Therefore, a section there which vanishes on $11$ points
has to vanish. This says that our section of bidegree $(2,1)$ again vanishes on $C$, so it is a section of a bundle of bidegree $(1,-1)$; but that is not effective
so this section has to vanish. This proves that our section $u|_Q$ vanishes. Therefore, $u$ may be viewed as a section of $\Oo _{\pp^3}(3)(-Q)=\Oo _{\pp^3}(1)$.
The remaining pieces of the
double points composing $2P$ give conditions of vanishing again at all the points of $P$ for this section of $\Oo _{\pp^3}(1)$,
but as $C$ is not contained in a plane, it implies that the section vanishes. This completes the proof that $H^0(J_{2P/X}(3))=0$.
We conclude from the previous exact sequence that
$$
h^2(V)=h^0(V(2))= h^0(E(1)).
$$
Now use the sequence
$$
0\rightarrow H^0(\Oo _X(1))\rightarrow H^0(E(1))\rightarrow H^0(J_{P/X}(2))\rightarrow 0.
$$
As usual, $H^0(J_{P/X}(2))$ is isomorphic to the kernel of the restriction map
$$
\cc ^{10}=H^0(\Oo _X(2))\rightarrow H^0(\Oo _C(6p))=\cc ^7
$$
and this restriction map is surjective, so its kernel has dimension $3$. We get
$$
h^0(E(1))=4+3=7.
$$
Thus, $h^2(V)=7$, and putting this together with the formula that the expected dimension is $5$, we have finally shown $h^1(V)=12$. This proves the lemma.
\end{proof}

Even though there is a $7$-dimensional obstruction space, we have constructed a $12$-dimensional family; it follows that all of the obstructions vanish
and a general point lies in a generically smooth irreducible component of dimension $12$.

\begin{corollary}
\label{component12}
The space of bundles $E$ fitting into an exact sequence of the form \eqref{theext}, where $P$ is a length $11$ subscheme of $C\cap X$ for $C$ a rational
normal cubic in $\pp^3$, consists of a single $12$-dimensional generically smooth irreducible component of the moduli space $M_{X^6}(2,1,11)$ of stable bundles of rank $2$, degree $1$ and $c_2=11$ on
our degree $6$ hypersurface $X=X^6$.
\end{corollary}
\begin{proof}
In order to understand how many irreducible components are produced by this construction,
we should investigate the monodromy of the set of choices of $11$ out of the $18$ points of $C\cap X$, as $C$ moves.
A choice of $6$ points determines the rational normal cubic $C$,
so any $6$ points can be moved to any $6$ other ones. Therefore, the monodromy action is $6$-tuply transitive. On the other hand, it contains a transposition, since we can move $C$ around a choice of curve that is simply tangent to $X$ at one point. Therefore, the monodromy group contains all transpositions, hence it is the full symmetric group on $18$ elements.
It acts transitively on the set of choices of $11$ out of the $18$ intersection points, so our construction produces a single irreducible component.
\end{proof}

\subsection{Points on a plane}

The other construction we have found for $CB(3)$ subschemes is to take $11$ points in a plane. Let $H$ be a plane in general position with respect to
$X$, and let $Y=X\cap H$. Let $P$ consist of a general collection of $11$ points in $Y$.

Suppose $P'\subset P$ is a subset of $10$ points. The map $H^0(\Oo _H(3))\rightarrow H^0(\Oo _Y(3))$ is injective (since $Y$ is a curve of degree $6$
in the plane $H$), so a general collection of $10$ points imposes independent conditions on $H^0(\Oo _H(3))$. As $h^0(\Oo _H(3))=10$, it means that
$H^0(J_{P'/H}(3))=0$, hence a section of $H^0(\Oo _{\pp^3}(3))$ vanishing on $P'$, has to vanish on $H$. In particular it vanishes on $P$, proving the
$CB(3)$ property for $P$. This also gives the formula
$$
H^0(J_{P/X}(3))\cong H^0(\Oo _X(2)) = \cc ^{10}.
$$

Consider next the space of choices of extension \eqref{theext}. As
$$
{\rm dim}({\rm Ext}^1(J_{P/X}(1), \Oo _X))=
h^1(J_{P/X}(3)) = 11 - 20 + h^0(J_{P/X}(3)) = 1,
$$
whereas scalar multiples of an extension class give the same bundle,
it means that for a given $P$ there is a single corresponding bundle.
On the other hand, we have $h^0(J_{P/X}(1))=1$ since $P$ is contained in a plane,
so $h^0(E)=2$. This means that for a given bundle $E$, the space of choices of section $s$ (modulo scaling)
leading to the subscheme $P$, has dimension $1$. Hence the dimension of the space of bundles obtained by this construction is
one less than the dimension of the space of subschemes:
$$
{\rm dim} \{ E\} = {\rm dim} \{ P\} -1.
$$
Count now the dimension of the space of choices of $P$: there is a three dimensional space of choices of the plane $H$, and for each one we have an $11$ dimensional
space of choices of the subscheme $P$ of $11$ points in $Y$. This gives ${\rm dim} \{ P\}=3+11=14$, so
${\rm dim} \{ E\}  = 13$. Altogether, we have constructed a $13$ dimensional family of stable bundles. It follows that this family must be in at least one irreducible
component distinct from the $12$-dimensional component constructed above. This proves the following theorem:

\begin{theorem}
\label{twocomponents}
For a very general degree $6$ hypersurface $X^6\subset \pp^3$,
the moduli space $M_{X^6}(2,1,11)$ contains a generically smooth $12$ dimensional component from Corollary \ref{component12},
and contains at least one irreducible component
of dimension $\geq 13$. In particular, it is not irreducible.
\end{theorem}

The general bundle in our $13$-dimensional family may be viewed as an elementary transformation \cite{MaruyamaET, MaruyamaTransform}.
A general line bundle $L$
of degree $11$ on $Y$ has a $2$-dimensional space of sections and the two sections generate $L$.
If $j:Y\hookrightarrow X$ denotes the inclusion then we get a bundle
$E$, elementary transformation of $\Oo _X^2$, fitting into exact sequences
$$
0\rightarrow E(-1)\rightarrow \Oo _X^2 \rightarrow j_{\ast}(L)\rightarrow 0,
$$
$$
0\rightarrow  \Oo _X^2 \rightarrow E \rightarrow j_{\ast}(L^{\ast})(1)\rightarrow 0.
$$
This shows that $E$ determines $Y$ and $L$.
Since $Y$ has genus $10$, the space of choices of hyperplane plus choice of $L$ has dimension $3+10=13$.
One may see that these bundles are the same as the previous ones, indeed the zeros of a section of our elementary transformation
$E$ are the same as those of the corresponding section of $L$. This gives an alternate
canonical viewpoint on our second construction of bundles that should be useful for understanding the obstruction map.

We conjecture that the rational normal case and the planar case cover all of $M_{X^6}(2,1,11)$. More precisely:

\begin{conjecture}
The $13$-dimensional family constructed in the present subsection
constitutes a full irreducible component of $M_{X^6}(2,1,11)$; this component is non-reduced and obstructed.
Together with the $12$-dimensional generically smooth 
component constructed in the previous subsection, 
these are the only irreducible components of
$M_{X^6}(2,1,11)$. In particular, $h^0(E)>0$ for any stable bundle with $c_2=11$.
\end{conjecture}

There doesn't seem to be an easy direct proof of the property $h^0(E)>0$. The Euler-characteristic consideration does give $h^0(E(1))>0$
so any $E$ has to be in an extension of $\Oo (-1)$ by $J_{P/X}(2)$ with $P$ satisfying $CB(5)$.
If this conjecture is true, it would imply that any $CB(5)$ subscheme of length $21$ contained in $X^6$, would have to be contained in a quadric hypersurface.
We didn't find a proof of that, but we couldn't find any constructions that weren't contained in quadric hypersurfaces either,
leading to the conjecture.

\bibliographystyle{amsplain}

\end{document}